\newtheorem{theorem}{Theorem}[section]
\newtheorem{lemma}[theorem]{Lemma}
\newtheorem{cor}[theorem]{Corollary}
\theoremstyle{definition}
\newtheorem{rmk}[theorem]{Remark}
\newtheorem{defin}[theorem]{Definition}
\newtheorem{notation}[theorem]{Notation}
\newtheorem{ex}[theorem]{Example}
\begin{document}
 
\title{Integrability of
diagonalizable matrices and a dual Schoenberg type
inequality
}
\author{S.V. Danielyan, A.E. Guterman, T.W. Ng}
 
\date{\small $^1$Lomonosov Moscow State University, Moscow,   Russia\\$^2$Moscow Center for Fundamental and Applied Mathematics,
		Moscow,   Russia\\$^3$Moscow Institute of Physics and Technology,
		Dolgoprudny, Russia\\$^4$Department of Mathematics,
		The University of Hong Kong,
		Pokfulam, Hong Kong\\ dsvnja@gmail.com \ \ \ guterman@list.ru \ \ \ ntw@maths.hku.hk}
\maketitle

\begin{abstract}
The concepts of differentiation and integration for matrices were
introduced for studying zeros and critical points of complex polynomials.
Any matrix is differentiable, however not all matrices are integrable.
The purpose of this paper is to investigate the integrability property and characterize it within the class of
diagonalizable matrices. In order to do this we study the relation between the
spectrum of a diagonalizable matrix and
 its integrability and the diagonalizability of the integral.
Finally, we apply our results to obtain a dual Schoenberg type inequality
relating zeros of
polynomials with their critical points.

{\bf Keywords}: polynomials, matrices, differentiators, integrators, non-derogatory

[MSC 2020] Primary: 30C10, 30C20, Secondary: 15A15

\end{abstract}

\def\char{{\rm char\,}}
\def\C{{\mathbb C}}
\def\K{{\mathbb K}}
\section{Introduction}

Zeros and critical points of a given univariable polynomial are related by
the celebrated Gauss-Lucas theorem stating that critical points belong to
the convex hull of zeros. More detailed and thorough investigation of the
relation between zeros and critical points of a polynomial is an interesting
open problem which stimulated already a lot of research in analytic theory of
polynomials. Far reaching progress in this area was established by the
solution of the famous Schoenberg conjecture proved independently by Pereira
\cite{Differentiators} and Malamud \cite{Malamud1,Malamud2}. This progress
was based on the notion of matrix differentiators introduced much  earlier
by Davis~\cite{Davis}.  In this paper we use the techniques from matrix
analysis and linear algebra to
study the inverse concept of integrator of a matrix which was
introduced by  Bhat and 
Mukherjee~\cite{IntegratorsOfMatricies}.
Bhat and Mukherjee have shown that any matrix is either freely integrable, or
uniquely integrable or non-integrable and characterized the freely
integrable matrices. However, the problem  to characterize the latter two categories was left open.
Our paper solves this problem for diagonalizable matrices in terms of characteristic polynomials.
In order to do this we study the relation between the spectrum of
diagonalizable matrix and
 its integrability and the diagonalizability of the
corresponding integral.
Moreover,
we apply the obtained results  to derive a dual Schoenberg type inequality
providing an upper bound for the sum of  squares of
the absolute values of zeros by an expression in
the critical points. 

We start by introducing some notations that we need further for  the main
definitions of our paper. In this paper denote by $\C$ the field of complex
numbers, and by $\K$ an
arbitrary algebraically closed field. If it is not  specifically mentioned, we
assume that the characteristic $\char \K=0$. Let $M_{n,m}(\K)$ denote the space
of all $n \times m$ matrices with entries from $\K$, we write $M_n$ if $m=n$.
Let $I_n$ be the unit matrix $n \times n$, $O_n$ be the zero $n \times n$
matrix. We write $I$ and $O$ if the size of the matrix is clear from the
context.
The transpose of   $A\in M_{n,m}$ is denoted by $A^{\top}\in M_{m,n}$.
Vectors in $\K^n$ are considered as row vectors and are identified with
corresponding $n$-tuples. The $j$'th unit vector is denoted by 	$e_j$ and
$e=(1, \ldots, 1)^{\top}$. For $X\in M_{n,m}(\K), Y\in M_{k,l}(\K)$ we denote by
$X\oplus Y\in M_{n+k,m+l}(\K)$ the block matrix
$\left(\begin{smallmatrix} X & 0 \\ 0 & Y\end{smallmatrix}\right).$
In case $\K = \C$ by $||A||_F$ we denote the
Frobenius norm of a matrix $A=(a_{ij})$,
i.e. $||A||_F = \sqrt{\sum\limits_{i, j} |a_{ij}|^2}$, by $||v||$
we denote the Euclidean norm of $n$ dimensional vector $v$, i.e.
$||v|| := \sqrt{\sum\limits_{i = 1}^{n} |v_i|^2}.$

The following notion of matrix   differentiability was
introduced by Davis in \cite{Davis} and further  investigated
in~\cite{IntegratorsOfMatricies,CheN06,CheN10,KushT16,Differentiators}.
\begin{defin}[{\cite[Definition~1]{IntegratorsOfMatricies}}]
\label{DEF:differentiator}
Let  $A$ be a linear operator  on a complex Hilbert space $H$ of the
dimension $n$ and  $P$ 
be an operator of an orthogonal projection on $H$ with
$\dim~\text{Ker}(P) = 1$. Let  $B$ be an operator satisfying the condition  
$B = PAP|_{P(H)}$. Then  $P$ is called a {\it differentiator}
of the operator  $A$, if characteristic polynomials of  $A$ and  $B$ satisfy
the condition $$p_{B}(x) = \frac{1}{n}p_{A}'(x).$$
In this case the operator $B$ is usually called a differential of the
operator~$A$.
\end{defin}

\medskip
Now without loss of generality we can assume that  
\begin{equation}
P = 
\left[
\begin{array}{cccc}
I_{n} & 0 \\
0 & 0
\end{array}
\right], \ 
A = 
\left[
\begin{array}{cccc}
B & u^{\top} \\
v & \lambda
\end{array}        
\right]
\end{equation}
where  $u, v \in \C^{n}$.

Differentiators appear to be useful in studying the relation between
the zeros of the polynomial and its critical points.
In 2003,   Pereira ~\cite{Differentiators} and
Malamud ~\cite{Malamud1, Malamud2} independently proved the following
conjecture using the method of differentiators of finite dimensional
operators.
\medskip

\begin{theorem}[{\bf Schoenberg's conjecture (1986)
\cite[Conjecture 3.1]{Differentiators}}]\label{THM:schoenberg}
Let $p(z)$ be a degree $n$ complex polynomial with zeros $z_1,\ldots,z_n$ and
critical points $w_1,\ldots, w_{n-1}.$ Then
$$\sum\limits^{n-1}_{i=1} |w_i|^2 \le
\left|\frac{1}{n}\sum\limits_{i = 1}^{n} z_i\right|^2 + 
\frac{n-2}{n} \sum\limits^n_{i=1} |z_i|^2$$
with the equality holds if and only if all $z_i$ lie on a straight line.
\end{theorem}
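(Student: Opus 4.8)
The plan is to realise the critical points of $p$ as the spectrum of a differential of the diagonal matrix carrying the zeros, and then to extract the inequality from Schur's classical trace bound, which states that for any square matrix $B$ with eigenvalues $\mu_1,\dots,\mu_k$ one has $\sum_i|\mu_i|^2\le\|B\|_F^2$, with equality precisely when $B$ is normal.

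First I would set $A=\mathrm{diag}(z_1,\dots,z_n)$, so that $p_A=p$, and take the unit vector $u=\tfrac{1}{\sqrt n}e$ together with the orthogonal projection $P=I_n-uu^{*}$ onto $u^{\perp}$, which has one-dimensional kernel as required by Definition~\ref{DEF:differentiator}. Writing $B=PAP|_{P(H)}$, the standard compression identity for a normal operator (whose eigenbasis here is the standard basis) gives
$$p_B(z)=\Big(\prod_{i=1}^n(z-z_i)\Big)\sum_{i=1}^n\frac{|u_i|^2}{z-z_i}=\sum_{i=1}^n|u_i|^2\prod_{j\ne i}(z-z_j).$$
Since every coordinate of $u$ satisfies $|u_i|^2=1/n$, the right-hand side equals $\tfrac1n\sum_i\prod_{j\ne i}(z-z_j)=\tfrac1n p'(z)$. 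Thus $P$ is a differentiator of $A$, and the eigenvalues of $B$ are exactly the critical points $w_1,\dots,w_{n-1}$.

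Next I would apply Schur's inequality to $B$, reducing the problem to the computation of $\|B\|_F^2=\operatorname{tr}(B^{*}B)=\operatorname{tr}(A^{*}PAP)$, where the last equality uses $P^2=P$ and cyclicity of the trace. Expanding $P=I-uu^{*}$ and abbreviating $s=u^{*}Au=\tfrac1n\sum_i z_i$, a direct evaluation of the four resulting traces yields $\operatorname{tr}(A^{*}A)=\sum_i|z_i|^2$, two cross terms each equal to $\tfrac1n\sum_i|z_i|^2$, and a last term equal to $|s|^2$. Collecting these gives $\|B\|_F^2=\tfrac{n-2}{n}\sum_i|z_i|^2+\big|\tfrac1n\sum_i z_i\big|^2$, which is exactly the asserted upper bound.

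For the equality case I would argue in two directions. Writing $A=X+iY$ with $X=\mathrm{diag}(\mathrm{Re}\,z_i)$ and $Y=\mathrm{diag}(\mathrm{Im}\,z_i)$ Hermitian, one has $B=PXP+iPYP$, so $B$ is normal if and only if its commuting Hermitian and skew-Hermitian parts satisfy $[PXP,PYP]=0$ on $P(H)$. If the $z_i$ are collinear then $Y=\alpha X+\beta I$ for real $\alpha,\beta$, whence $PYP=\alpha\,PXP+\beta P$ commutes with $PXP$, giving equality. The harder direction, which I expect to be the main obstacle, is the converse: showing that for the specific projection $P=I-\tfrac1n ee^{*}$ the commutation $[PXP,PYP]=0$ forces the points $(\mathrm{Re}\,z_i,\mathrm{Im}\,z_i)$ to lie on a common line in $\mathbb{R}^2$. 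Here I would compute the commutator in coordinates using $P=I-\tfrac1n J$ and deduce the affine dependence of $\{x_i\}$ and $\{y_i\}$; the delicate points are controlling the rank-one structure of the correction terms and correctly handling repeated zeros and the degenerate all-equal configuration.
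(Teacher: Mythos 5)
First, a caveat: the paper does not prove Theorem~\ref{THM:schoenberg} at all --- it quotes it as the theorem of Pereira and Malamud --- so your attempt can only be compared with the cited differentiator proof of Pereira, which is exactly the route you chose. Your proof of the inequality itself is correct and complete: with $A=\mathrm{diag}(z_1,\dots,z_n)$, $u=e/\sqrt n$ and $P=I-uu^{*}$, the compression identity $p_B(z)=p_A(z)\,u^{*}(zI-A)^{-1}u$ gives $p_B=\tfrac1n p'$, and the expansion of $\operatorname{tr}(A^{*}PAP)$ does collapse to $\tfrac{n-2}{n}\sum_i|z_i|^2+\bigl|\tfrac1n\sum_i z_i\bigr|^2$, so Schur's bound yields the stated inequality.

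The genuine gap is the equality case, which is half of the ``if and only if'' in the statement: you prove that collinearity implies equality (and even there you omit vertical lines, where $Y$ is not of the form $\alpha X+\beta I$; but then $X=cI$, so $PXP$ acts as $c\cdot\mathrm{id}$ on $P(H)$ and commutation is immediate), while the converse is left as a plan that you yourself flag as the main obstacle. You should know that your plan does close the gap, and in a few lines rather than delicately. Since the diagonal matrices $X$ and $Y$ commute,
\[
(PXP)(PYP)-(PYP)(PXP)=P\bigl(XPY-YPX\bigr)P=-\tfrac1n\,P\bigl(XJY-YJX\bigr)P,\qquad J=ee^{\top},
\]
so normality of $B$ is equivalent to $PMP=0$, where $M_{ij}=x_iy_j-x_jy_i$. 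Using $(JM)_{ij}=S_xy_j-S_yx_j$, $(MJ)_{ij}=x_iS_y-y_iS_x$ and $JMJ=0$ (antisymmetry of $M$), with $S_x=\sum_k x_k$, $S_y=\sum_k y_k$, $\bar x=S_x/n$, $\bar y=S_y/n$, one computes entrywise
\[
(PMP)_{ij}=(x_i-\bar x)(y_j-\bar y)-(x_j-\bar x)(y_i-\bar y).
\]
Thus $B$ is normal iff the centered points $(x_i-\bar x,\,y_i-\bar y)$ are pairwise parallel in $\mathbb{R}^2$, i.e.\ iff all $z_i$ lie on one line through $\bar x+i\bar y$; repeated zeros and the all-equal configuration cause no trouble here. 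With this computation written out (and the vertical-line case noted), your argument becomes a complete and correct proof, essentially identical to Pereira's.
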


Comparing the coefficients at  $x^{n-1}$, we obtain
$tr(B) = \frac{n}{n +1}tr(A)$,
which implies that  $\lambda = tr(A) - tr(B) =
\frac{n + 1}{n}tr(B) - tr(B) = \frac{tr(B)}{n} =: \tau(B)$\\
The converse operation of integration was introduced by
Bhat and    Mukherjee in~\cite{IntegratorsOfMatricies}.
\begin{defin}[{\cite[Definition~3]{IntegratorsOfMatricies}}]
\label{DEF:integrator}
 Let  $B \in M_{n}(\C), \ A \in M_{n+1}(\C)$,
then  $A$ is called  {\it an integral  of } $B$,  if 
\begin{equation}
A = 
\left[
\begin{array}{cccc}
B & u^{\top} \\
v & \tau(B)
\end{array}        
\right],
\end{equation} and also  $p_{B}(x) = \frac{1}{n+1}p_{A}'(x)$. 
In this case the pair of vectors  $(u, v)$ is called  {\it
an integrator} of  $B$ and the element
$\det(A)$  {\it is called a constant of integration}.
\end{defin}

For any algebraically closed field $\K$ with $\char \K = 0$ one can
define a formal derivative and
an integral of a polynomial  $p(x) = a_nx^n + \ldots + a_1x + a_0$  as
follows. The derivative is $p'(x) := na_nx^{n-1} + \ldots + 2a_2x + a_1$,
and the integral
$P(x)  := \frac{1}{n+1}a_nx^{n+1} + \ldots + \frac12 a_1x^2 + a_0x+C,$
where $C\in \K$ is a constant. Then Definitions \ref{DEF:differentiator} 
and \ref{DEF:integrator} can be considered for matrices over any
algebraically closed field $\K$ of characteristic zero.
It is straightforward to check that the results
from \cite{IntegratorsOfMatricies} are also true in this generality.
In this paper we investigate how the integrability depends on the
values of zeros, their multiplicities, and integrability
property. Thus below we consider algebraically closed fields of zero
characteristic.

\begin{defin}[{\cite[Definition~5]{IntegratorsOfMatricies}}] 
The square matrix  $B$ is called 
{\it integrable} if there exists its integral.  A matrix $B$ is 
called {\it uniquely integrable} if it is integrable and there
exists  $\alpha \in \C$ such that 
for any integral  $A$ of the matrix  $B$ it holds that $\det(A) = \alpha$.
A matrix $B$ is called  {\it freely integrable} if
for any  $\alpha \in \C$ there exists an integral $A$
of the matrix  $B$, such that  $\det(A) = \alpha$.
\end{defin}

Below we collect several examples demonstrating different properties
and features of matrix integrability.

Let us start with an example of freely integrable matrix.
\begin{ex} \cite[Example 4]{IntegratorsOfMatricies} \label{EX:freely_integrable}
Fix  $\lambda \in \K\setminus \{1\}$. Consider
$B = 
\begin{pmatrix}
1 & 0 \\
0 & \lambda
\end{pmatrix}$. 
Observe that for any $ t \in \K$, \\ 
$A_t = 
\begin{pmatrix}
1 & 0 & 1 \\
0 & \lambda & 1 \\
\frac{2t - 3\lambda + 1}{2(1 - \lambda)} &
-\frac{(\lambda - 1)^{3} + 2t - 3\lambda + 1}{2(1 - \lambda)} &
\frac{\lambda + 1}{2}
\end{pmatrix}$ is an integral of B with the constant of integration~$t$.

Indeed,
$\det(xI - A_t) = x^3 - \frac{3(\lambda + 1)}{2}x^2 + 3\lambda x - t$,
therefore\\
$p_{A_t}'(x) = 3x^2 - 3(\lambda+1)x + 3\lambda = 3(x -1)(x - \lambda) =
3p_{B}(x)$
and $\det(A_t) = t.$
Therefore $B$ is freely integrable.
\end{ex}
We now give an example of uniquely integrable matrix.
\begin{ex} \label{EX:uniquely_integrable}
Consider the case
$\lambda = 1,$ i.e. 
$B = 
\begin{pmatrix}
1 & 0 \\
0 & 1
\end{pmatrix}$ and write the integral in its general form
$A =
\begin{pmatrix}
1 & 0 & u_1 \\
0 & 1 & u_2 \\
v_1 & v_2 & 1
\end{pmatrix}$. We have that
$$p_{A}(x) = x^3 - 3x^2 + (3 - u_1v_1 - u_2v_2)x + (u_1v_1 + u_2v_2 - 1),$$
$$p_{A}'(x) = 3x^2 - 6x + (3 - u_1v_1 - u_2v_2) = 3p_{B}(x) = 3x^2 - 6x + 3,$$
$$3 - u_1v_1 - u_2v_2 = 3.$$
Last equation has solutions and 
$\det(A) = 1 - (u_1v_1 + u_2v_2) = 1$ 
for any solution. This implies that  $B$ is uniquely integrable.
\end{ex}
Finally consider an example of non-integrable matrix.
\begin{ex}\cite[Example 4]{IntegratorsOfMatricies} \label{EX:not_integrable}
In \cite[Theorem 16]{IntegratorsOfMatricies} it was shown that any  
matrix of size two or three is integrable. However if $\lambda_1 \neq \lambda_2$
then, for example, no integral exists for the following matrix of the size four:
$B_{\lambda_1,\lambda_2} := {\rm diag\,}
(  \lambda_1, \lambda_1 , \lambda_2 ,   \lambda_2),$ i.e., the diagonal matrix
with the entries $\lambda_1, \lambda_1 , \lambda_2 ,   \lambda_2$ on the
diagonal.

Indeed, assume that 
$A = \left(\begin{smallmatrix}
    \lambda_1   && 0   && 0   && 0   && u_1\\
    0   && \lambda_1   && 0   && 0   && u_2\\ 
    0   && 0   && \lambda_2   && 0   && u_3\\
    0   && 0   && 0   && \lambda_2   && u_4\\
    v_1 && v_2 && v_3 && v_4 && \tau(B_{\lambda_1,\lambda_2})
\end{smallmatrix}\right)$
is an integral of $B_{\lambda_1,\lambda_2}$.\\
Then $p'_A(x) = 5p_{B_{\lambda_1,\lambda_2}}(x)$.
Writing down the determinant of $xI - A$ we obtain
$$p_A(x) = (x - \tau(B_{\lambda_1,\lambda_2}))p_{B_{\lambda_1,\lambda_2}}(x) -
(u_1v_1 + u_2v_2)(x - \lambda_1)(x - \lambda_2)^2 -
(u_3v_3 + u_4v_4)(x - \lambda_1)^2(x - \lambda_2).$$
By direct substitution we have $p_A(\lambda_1) = p_A(\lambda_2) = 0$.
Thus the multiplicities of $\lambda_1, \lambda_2$ as zeros of
$p_A(x)$ are equal to $3$,
which is not possible since $\deg(p_A) = 5.$
\end{ex}

The following example shows that integration can break many matrix properties
such as   unitarity.
 
\begin{ex}
Let $\K = \C.$ Consider an arbitrary unitary non-scalar operator
$B$ and its integral $A.$
It turns out that $A$ is not unitary. Indeed, suppose that $A$ is unitary.
Denote $n = \deg(p_{A}(x)).$
Since $p_{B}(x) = \frac{p_{A}'(x)}{n}$ then by  Theorem \ref{THM:schoenberg} we obtain
$$\sum\limits^{n-1}_{i=1} |w_i|^2 \le
\left|\frac{1}{n}\sum\limits_{i = 1}^{n} z_i\right|^2 + 
\frac{n-2}{n} \sum\limits^n_{i=1} |z_i|^2,$$
where $z_1,\ldots, z_n$ are the zeros of $p_{A}(x)$, and  $w_1,\ldots,w_{n-1}$ are
the zeros of $p_{B}(x).$ Since the spectrum of unitary operator lies on the
unit sphere we conclude
$$|z_1| = \ldots = |z_n| = |w_1| = \ldots = |w_{n-1}| = 1.$$
Therefore
$$n - 1 \leq \left|\frac{1}{n}\sum\limits_{i = 1}^{n} z_i\right|^2 +
n\cdot \frac{n-2}{n} \text{\ \ \ and \ \ \ } \left|\sum\limits_{i = 1}^{n} z_i\right|^2 \geq n.$$
Thus $z_1 = \ldots = z_n,$ hence $w_1 = \ldots = w_{n-1}$, which contradicts
with $B$ being non-scalar.
\end{ex}
\begin{rmk}
The result of the previous example could be obtained by only using
Gauss-Lucas theorem. Suppose that $A$ is unitary.\\
1) If $A$ is scalar then $B = PAP,$ where $P$ is an operator of
orthogonal projection, is also scalar.\\
2) If $A$ is not scalar then, since any unitary operator is diagonalizable,
it has at least two distinct eigenvalues. Then
$p_{B}(x) = \frac{p_{A}'(x)}{\deg(p_{A}(x))}$
has a zero $x_0,$ such that $p_{A}(x_0) \neq 0.$
By the Gauss-Lucas theorem $x_0$ lies in the convex hull of the zeros of
$p_{A}(x).$ Since the spectrum of unitary operator lies on the unit sphere we
obtain that $x_0$ lies in the open unit disc. Thus the spectrum of $B$
does not lie on the unit sphere, which contradicts with the
unitarity of $B.$
\end{rmk}

Next we consider an example that shows us how one can construct non-integrable
matrices of any even order $\geq 4.$

\begin{ex}
Consider a diagonal matrix $B_0 = {\rm diag\,}(\lambda_1, \ldots, \lambda_n)$
and $B = \left( \begin{smallmatrix}
B_0 & 0\\
0 & B_0
\end{smallmatrix}\right).$ Then $B$ is integrable if and only if
$B_0 = \lambda I_{n}.$
Indeed, if the integral exists, then it has the form
$A = \left( \begin{smallmatrix}
B_0 & 0 & u_1\\
0 & B_0 & u_2\\
v_1 & v_2 & \tau(B)
\end{smallmatrix}\right).$
Denoting $X = xI_{2n} - B, \ Y = -(u_1, u_2)^{\top}, \ 
Z = -(v_1, v_2), \ W = xI_1 - \tau(B), $
by the formula for the determinant of block matrix
$$
\det\left(\begin{smallmatrix}
X & Y\\
Z & W
\end{smallmatrix}\right) = \det(X)\det(W - ZX^{-1}Y)
$$
we obtain that for any $x$ distinct from the zeros of $p_{B_0}(x)$
$$p_A(x) = \det(xI_{2n+1} - A) = 
p_{B_0}^2(x)(x - \tau(B) - (v_1, v_2)(xI_{2n} - B)^{-1}(u_1, u_2)^{\top}).
$$
Since
$(xI_{2n} - B)^{-1} =
{\rm diag\,}(\frac{1}{x - \lambda_1}, \ldots, \frac{1}{x - \lambda_n},
\frac{1}{x - \lambda_1}, \ldots, \frac{1}{x - \lambda_n})$ then 
$$(v_1, v_2)(xI_{2n} - B)^{-1}(u_1, u_2)^{\top} = \sum\limits_{i = 1}^{n}
\frac{t_i}{x - \lambda_i} \text{ for some } t_1, \ldots, t_n \in \K.$$ 
Thus $p_{B_0}(x)\biggl((v_1, v_2)(xI_{2n} - B)^{-1}(u_1, u_2)^{\top}\biggl)$
is a polynomial, which leads to $p_{B_0}(x)\ | \ p_{A}(x).$
Therefore if $A$ is an integral of $B$ then
$$p_{A}'(x) = (2n+1)p_{B_0}^2(x).$$ Thus any zero of
$p_{A}'(x)$ is a zero of $p_{A}(x).$ Therefore
$p_{A}(x) = (x - \lambda)^{2n+1}, \ p_{B}(x) =
(x - \lambda)^{2n}, \ B = \lambda I_{2n}.$
\end{ex}

In the next example we show how an integrable non-scalar matrix
with multiple eigenvalues can be produced.

\begin{ex}
Consider the matrix $B = \left( \begin{smallmatrix}
aI_{n-1} & 0\\
0 & b
\end{smallmatrix}\right)$ with $a \neq b.$
Then the matrix $A = \left( \begin{smallmatrix}
aI_{n- 1} & 0&0\\
0& b & 1\\
0& \beta & \tau(B) 
\end{smallmatrix}\right),$ where $\beta = b\tau(B) - a(\tau(B) + b - a),$
is an integral of $B.$ Indeed,
$$
p_A(x) = (x - a)^{n-1}((x - b)(x - \tau(B)) - b\tau(B) + a(\tau(B) + b - a))=$$
$$  = (x - a)^{n-1}(x^2 - (b + \tau(B))x + a(\tau(B) + b - a)) =
(x - a)^{n}(x - (\tau(B) + b - a)).$$
Then 
$
p_A'(x) = n(x - a)^{n-1}(x - (\tau(B) + b - a)) + (x - a)^{n}  =
(x - a)^{n-1}(nx - n(\tau(B) + b - a) + x - a) =
(x - a)^{n-1}((n+1)x -
n\left(\frac{(n-1)a + b}{n} + b - a\right) - a)  = 
(n+1)(x - a)^{n-1}(x - b) = (n+1)p_{B}(x).
$
\end{ex}

It turns out that an integral of a diagonal matrix could be both
diagonalizable or not, as the following example shows.

\begin{ex}
Consider $B = I_n,$ then $A_1 = I_{n+1}$ is a diagonalizable integral of
$B,$ but it is straightforward to see that $A_2 = \left( \begin{smallmatrix}
I_{n} & 1\\
0 & 1
\end{smallmatrix}\right)$ is also an integral of $B,$ since
$p_{A_2}(x) = p_{A_1}(x),$
but is not diagonalizable.
\end{ex}

In previous examples the integrability does not depend on the particular values of
eigenvalues, it only depends on their multiplicities. This in not the case in
general, as the following example shows.

\begin{ex}
Consider $B = {\rm diag}\, (a,a,b,b,c).$ 
Let us show that if $A = \left(\begin{smallmatrix}
a&0&0&0&0&u_1\\
0&a&0&0&0&u_2\\
0&0&b&0&0&u_3\\
0&0&0&b&0&u_4\\
0&0&0&0&c&u_5\\
v_1&v_2&v_3&v_4&v_5&\tau(B)
\end{smallmatrix}\right)$
is an integral of $B$
then $p_{A}(x) = (x - a)^3(x - b)^3.$ Indeed,
$$
p_{A}(x) = (x - a)^2(x - b)^2(x - c)(x - \tau(B)) -
(v_1u_1 + v_2u_2)(x - a)(x - b)^2(x - c)- $$
$$-  (v_3u_3 + v_4u_4)(x - a)^2(x - b)(x - c) -
v_5u_5(x - a)^2(x - b)^2.
$$
Thus if $p_{A}'(x) = 6p_{B}(x)$ then
$$p_{A}(a) = p'_{A}(a) = p''_{A}(a) = p_{A}(b) =
p'_{A}(b) = p''_{A}(b) = 0,$$
hence
$$
v_1u_1 + v_2u_2 = v_3u_3 + v_4u_4 = 0,
$$
$$
p_{A}(x) = (x - a)^2(x - b)^2(x - c)(x - \tau(B)) -
v_5u_5(x - a)^2(x - b)^2.
$$
Therefore
$$
p_A''(a) = 2(a - b)^2((a - c)(a - \tau(B)) - v_5u_5) = 0,$$
$$p_A''(b) =
2(a - b)^2((b - c)(b - \tau(B)) - v_5u_5) = 0,
$$
thus $(x - c)(x - \tau(B)) - v_5u_5 = (x - a)(x - b)$ and
$p_{A}(x) = (x - a)^3(x - b)^3.$

Thus
$p_{A}'(x) = 6(x - a)^2(x - b)^2(x - \frac{a + b}{2}).$ Hence if
$c \neq \frac{a + b}{2}$ then $B$ is not integrable. On the other hand
if $c = \frac{a + b}{2}$ then
$$A = \left(\begin{smallmatrix}
a&0&0&0&0&0\\
0&a&0&0&0&0\\
0&0&b&0&0&0\\
0&0&0&b&0&0\\
0&0&0&0&c&1\\
0&0&0&0&c\tau(B) - ab&\tau(B)
\end{smallmatrix}\right)$$
is an integral of $B$. Indeed,
$$
p_{A}(x) = (x - a)^2(x-b)^2((x - c)(x - \tau(B)) + ab - c\tau(B)),$$
$$p_A(x) = (x - a)^2(x-b)^2(x^2 - (c + \tau(B))x + ab),$$
$$p_A(x) = (x - a)^2(x-b)^2(x^2 - (a + b)x + ab) = (x - a)^3(x-b)^3,
$$
$$p_A'(x) = 6(x - a)^2(x - b)^2
\left(x - \frac{a + b}{2}\right) = 6p_{B}(x).$$
\end{ex}

In  \cite[Corollary 10]{IntegratorsOfMatricies} it was proved that
the following alternative
holds: a matrix  is either freely
integrable, or uniquely integrable, or non-integrable. It is shown in
\cite[Theorem 9]{IntegratorsOfMatricies} that a matrix is freely
integrable if and only if it is non-derogatory.
However, the recognition question for integrability or non-integrability of a given
matrix remained open.

In this paper we  investigate   integrability for any
diagonalizable matrices in terms of the multiplicities of their eigenvalues.
We find the conditions on multiplicities that determine if a matrix is integrable
or not and show that for all other tuples there are both integrable and
non-integrable matrices. Also we present a criterion for the diagonalizability
of an integral of a diagonalizable matrix. Multiple integration is considered as
well. As a corollary we characterize sequences of diagonalizable matrices in
which each term is an integral of the previous one.
 
Our paper is organized as follows.
Section 2 is devoted to studying the
existence of a full integral of a polynomial (Theorem \ref{THM:full_int_classification}).
Section 3 describes the relation
between the integrability of a diagonalizable matrix and full integrals of
its
characteristic polynomial (Theorem \ref{THM:integrable_iff_fullintegral}). This section also describes the integrability for any
diagonalizable matrices in terms of the multiplicities of their eigenvalues (Theorem \ref{THM:diagonal_integrable_classifications}). In
Section 4 we provide a criterion for an integral to be diagonalizable and
consider  multiple integration (Theorem \ref{THM:diagonal_integral_diagonalizable_criteria} and Corollary \ref{COR:seq_int_iff_seq_fint}). In Section 5, we apply our results to obtain a
dual Schoenberg type inequality for polynomials with full integrals (Theorem \ref{THM:dual Schoenberg} and Corollary \ref{COR:dual Schoenberg}).


\section{Existence of a full integral of a polynomial}

In this section we   investigate the relations between the zeros of a
characteristic polynomial of a matrix, and its integral. It turns out
that these
questions can be reduced to the results on the full integrals of polynomials
from  the recent
paper \cite{DanG}.

\begin{defin}
\cite[Definition 1.1]{DanG}
\label{DEF:km}
We say that $p$ is a polynomial  {\em of type\/} $(k, m)$, where $k$ and $m$ 
are non-negative integers,
if  $p$ has  $k$ different simple zeros and    $m$  different 
multiple zeros.
\end{defin}

Consider some polynomials of different  types.
\begin{ex} Assume that $\lambda_1 \neq \lambda_2$. Then
$(x - \lambda_1)(x - \lambda_2)$ is the polynomial of type $(2, 0)$,\\
$(x - \lambda_1)(x - \lambda_2)^7$ is the polynomial of type $(1, 1)$,\\
$(x - \lambda_1)^4(x - \lambda_2)^5$ is the polynomial of type $(0, 2)$.
\end{ex}

\begin{defin} \label{DEF:full_integral}
\cite[Definition 1.3]{DanG}
The polynomial  $P \in \K[x]$ is called a {\em full integral\/} of
the polynomial $p \in \K[x]$, if $P' = p$ and
for any  $\lambda \in \K$ satisfying  $(x - \lambda)^2 | p$
we have that $(x - \lambda) | P.$ 
In other words,  any  multiple zero of the polynomial  $p$ 
is  a zero of~$P$.
\end{defin}

\begin{rmk}
The existence of full integrals splits into cases similarly
as the existence of integrals of the matrices do.
As was shown in \cite[Lemmas 3.1, 3.2]{DanG}:

1) If $m = 0$ then any integral of $p$ is its full integral
(which is similar to freely integrable matrix);

2) If $m > 0$ then either $p$ does not have a full integral
(similar to non-integrable matrix) or $p$ has unique full integral
(similar to uniquely integrable matrix).
\end{rmk}

Consider several examples.
\begin{ex} Let $p(x) = x^2$.

1. Polynomial $P_1(x) = x^{3}$ is a full integral of the polynomial  $p(x)$,
since  $0$ is a zero of the polynomial  $P_1(x)$.

2. Polynomial $P_2(x) = x^{3} + 1$ is an  integral of the polynomial  $p(x)$,
but it is not a full integral since  $0$ is not a zero of the
polynomial~$P_2(x)$.
\end{ex}

The full integrals of polynomials are closely related with the integrals of
diagonalizable matrices. In the next section we will show that a 
diagonalizable matrix is integrable if and only if 
its characteristic polynomial has a full integral.
Let us introduce the following notations and use them further.

\begin{notation} \label{DEF:f_q_Q_h}
By polynomial $f$ we denote the polynomial of type $(k, m)$
$$f(x) = (x - a_1)\ldots(x - a_k)
(x - b_1)^{\alpha_1}\ldots(x - b_m)^{\alpha_m},$$
where $a_1,\ldots, a_k, b_1, \ldots, b_m\in \K$ are pair-wise distinct,
$k, m \in \mathbb N \cup \{0\}, \ \alpha_1,\ldots, \alpha_m \in
\mathbb N \setminus \{1\}.$
We also denote
$$q(x) := (x - b_1)^{\alpha_1}\ldots(x - b_m)^{\alpha_m},$$
$$Q(x) := q(x)(x - b_1)\ldots(x - b_m),$$
$$h(x) := (x - a_1)\ldots(x - a_k).$$

Let $\K_l[x]$ be the linear space of polynomials of the degree
less than or equal to  $l$.

We denote by $U_i\subseteq \K_l[x]$ the subspace of polynomials having
zero value in 
$b_i,\ i = 1,\ldots, m.$ $U_0\subset \K_l[x]$ 
denotes the subspace of polynomials of the degree strictly less than $l$, and
$U = U_0\,\cup\,U_1\,\cup\,\ldots\,\cup\,U_{m}$.

For a fixed polynomial $f$ we consider the map 
$$\varphi_{l, m}: \K_{l}[x] \longrightarrow \K_{l + m - 1}[x],$$
defined by
$$\varphi_{l, m}: g \longmapsto \frac{(Qg)'}{q}.$$
\end{notation}

Below we provide several results concerning full integrals of polynomials proved
in \cite{DanG} since they appeared to be useful for matrix integrability.

\begin{lemma} \cite[Lemmas 2.7, 2.8]{DanG} \label{LM:varphiproperties}
The map $\varphi_{l,m}$ has following properties:

1. $\varphi_{l,m}$ is a linear map; 

2. the kernel $\text{Ker}\,\varphi_{l,m} = 0;$

3. if $m > 1$, then
$(\text{Im}\,\varphi_{l,m} \cup U) \subset  \K_{l + m - 1}[x]$,
and this inclusion is strict;

4. if $m = 1,$ then $\varphi_{l,m}$ is invertible;

5. the image $\text{Im}\,\varphi_{l,m} \nsubseteq U.$
\end{lemma}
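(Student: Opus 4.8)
The plan is to derive all five claims from one well-definedness check together with two structural facts: injectivity of $\varphi_{l,m}$ in characteristic zero, and the elementary observation that a vector space over an infinite field is never a union of finitely many proper subspaces. Throughout I write $d=\deg q=\alpha_1+\cdots+\alpha_m$, so that $\deg Q=d+m$, and I take the standing assumption $m\ge 1$.

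First I would verify that $\varphi_{l,m}$ really maps into $\K_{l+m-1}[x]$. Since each $\alpha_i\ge 2$, the derivative $q'$ is divisible by $\prod_i(x-b_i)^{\alpha_i-1}$, hence $q'\cdot\prod_i(x-b_i)$ is divisible by $q$; writing $(Qg)'=q'\prod_i(x-b_i)\,g+q\,\bigl(\prod_i(x-b_i)\,g\bigr)'$ then shows $q\mid(Qg)'$, so $\varphi_{l,m}(g)\in\K[x]$, and a degree count gives $\deg\varphi_{l,m}(g)\le l+m-1$. Claim~1 is immediate, since $g\mapsto Qg$, differentiation, and division by the fixed $q$ are each linear. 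For claim~2, if $\varphi_{l,m}(g)=0$ then $(Qg)'=0$; in characteristic zero this forces $Qg$ constant, impossible for $g\ne 0$ because $\deg Q\ge m\ge 1$, so $\text{Ker}\,\varphi_{l,m}=0$. Claim~4 follows at once: when $m=1$ the codomain is $\K_{l+m-1}[x]=\K_l[x]$, which has the same dimension $l+1$ as the domain, and an injective endomorphism of a finite-dimensional space is bijective.

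The two remaining claims both rest on two explicit computations that I expect to be the technical heart of the argument. The first is a leading-coefficient count: for $g=x^l$ the product $Qg$ is monic of degree $d+m+l$, so $(Qg)'$ has leading coefficient $d+m+l\ne 0$, and after dividing by the monic $q$ one finds $\deg\varphi_{l,m}(x^l)=l+m-1$ exactly. The second, and the main obstacle, is the evaluation of $\varphi_{l,m}(g)$ at a multiple zero $b_i$: factoring $Qg=(x-b_i)^{\alpha_i+1}G$ with $G(b_i)=g(b_i)\prod_{j\ne i}(b_i-b_j)\cdot\prod_{j\ne i}(b_i-b_j)^{\alpha_j}$, differentiating, and cancelling $(x-b_i)^{\alpha_i}$ against $q$ yields
\[
\varphi_{l,m}(g)(b_i)=(\alpha_i+1)\Bigl(\prod_{j\ne i}(b_i-b_j)\Bigr)\,g(b_i),
\]
which is nonzero precisely when $g(b_i)\ne 0$, using $\alpha_i+1\ne 0$ and the pairwise distinctness of the $b_j$.

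With these in hand, claim~5 is direct: choosing $g$ of degree exactly $l$ with $g(b_i)\ne 0$ for all $i$ (for instance $g=(x-c)^l$ with $c\notin\{b_1,\ldots,b_m\}$, available since $\K$ is infinite), the polynomial $\varphi_{l,m}(g)$ has degree $l+m-1$, hence lies outside $U_0$, while $\varphi_{l,m}(g)(b_i)\ne 0$ places it outside every $U_i$ with $i\ge 1$; thus $\varphi_{l,m}(g)\notin U$ and $\text{Im}\,\varphi_{l,m}\nsubseteq U$. Finally, for claim~3, injectivity gives $\dim\text{Im}\,\varphi_{l,m}=l+1$, whereas $\dim\K_{l+m-1}[x]=l+m$, so for $m>1$ the image is a \emph{proper} subspace; each $U_i$ is proper as well ($U_0=\K_{l-1}[x]$, and for $i\ge 1$ the space $U_i$ is the kernel of evaluation at $b_i$, of dimension $l$). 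Hence $\text{Im}\,\varphi_{l,m}\cup U$ is a union of finitely many proper subspaces of $\K_{l+m-1}[x]$, which over the infinite field $\K$ cannot exhaust the whole space, giving the strict inclusion.
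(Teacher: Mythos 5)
Your proof is correct, and it is worth noting that the paper itself contains no argument for this lemma at all: it is imported verbatim from \cite[Lemmas 2.7, 2.8]{DanG}. So your write-up is not a variant of the paper's proof but a self-contained replacement for the citation. Its two computational pillars both check out: the divisibility $q \mid (Qg)'$ (via $q \mid q'\prod_i(x-b_i)$), which gives well-definedness, and the evaluation identity $\varphi_{l,m}(g)(b_i) = (\alpha_i+1)\bigl(\prod_{j\neq i}(b_i-b_j)\bigr)g(b_i)$, which is exactly the right tool for item 5. Items 1, 2 and 4 follow as you say (linearity, characteristic zero, equality of dimensions). For item 3 you combine injectivity, which gives $\dim \text{Im}\,\varphi_{l,m} = l+1 < l+m = \dim \K_{l+m-1}[x]$ when $m>1$, with the fact that a vector space over an infinite field is never a union of finitely many proper subspaces; this is fully in the spirit of the paper, which invokes that same fact (citing \cite[Theorem 1.2]{ProperSubspaces}) in the proof of Lemma \ref{LM:generalcase_itegratorexists}. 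Two minor points. First, Notation \ref{DEF:f_q_Q_h} literally places $U_0,\ldots,U_m$ inside $\K_l[x]$, whereas the way the present lemma is applied in Lemma \ref{LM:generalcase_integratordoesnotexists} requires reading them inside the codomain $\K_{l+m-1}[x]$ (so $U_0$ is the set of polynomials of degree strictly less than $l+m-1$, and $U_i$ those vanishing at $b_i$); your argument is robust to this ambiguity, since your witness $\varphi_{l,m}\bigl((x-c)^l\bigr)$ has exact degree $l+m-1$ and is nonzero at every $b_i$, hence avoids $U$ under either reading, and your union-of-proper-subspaces argument for item 3 likewise works under either reading. Second, you state the leading-coefficient count for $g=x^l$ but apply it to $g=(x-c)^l$; the extension to an arbitrary monic polynomial of degree $l$ is immediate, but it deserves a sentence.
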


\begin{lemma} \label{LM:manymultipleroots_nofullintegral}
\cite[Theorem 3.7]{DanG}
Let $m > k + 1$. Then  $f$ does not have a full integral.
\end{lemma}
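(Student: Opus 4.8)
The plan is to show that the very definition of a full integral forces an unavoidable amount of vanishing at the multiple zeros $b_1,\dots,b_m$, and then to derive a contradiction by a degree count. So I would begin by supposing, for contradiction, that $P$ is a full integral of $f$, which means $P' = f = hq$ together with $(x-b_j)\mid P$ for every $j=1,\dots,m$.

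The first and central step is to pin down the exact multiplicity of each $b_j$ as a zero of $P$. Since $h(b_j)=\prod_i(b_j-a_i)\neq 0$ (the $a_i$ and $b_j$ being pairwise distinct) and the factors $(x-b_{j'})^{\alpha_{j'}}$ with $j'\neq j$ do not vanish at $b_j$, the derivative $P'=hq$ has $b_j$ as a zero of multiplicity exactly $\alpha_j$. Because $\char \K = 0$ and $P(b_j)=0$, this upgrades to $P$ having $b_j$ as a zero of multiplicity exactly $\alpha_j+1$: writing $P=(x-b_j)^r u$ with $u(b_j)\neq 0$ and $r\geq 1$, one gets $P'=(x-b_j)^{r-1}\bigl(ru+(x-b_j)u'\bigr)$ with nonvanishing bracket at $b_j$, so $r-1=\alpha_j$. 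Since the $b_j$ are distinct, the factors $(x-b_j)^{\alpha_j+1}$ are pairwise coprime, whence their product $Q(x)=\prod_j(x-b_j)^{\alpha_j+1}$ divides $P$.

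A degree comparison then finishes the proof. Write $P=Qg$. From $P'=f$ we have $\deg P = \deg f + 1 = k+\sum_j\alpha_j+1$, while $\deg Q=\sum_j(\alpha_j+1)=\sum_j\alpha_j+m$, so $\deg g = \deg P - \deg Q = k+1-m$. If $m>k+1$ this quantity is negative, which is impossible for a nonzero polynomial; and $g$ cannot be the zero polynomial because $P'=f\neq 0$ forces $P\neq 0$. This contradiction shows that no full integral exists. Equivalently, in the language of Notation \ref{DEF:f_q_Q_h}, a full integral corresponds to a solution $g$ of $\varphi_{l,m}(g)=h$ with $l=k+1-m$, and once $m>k+1$ the domain $\K_{k+1-m}[x]$ is trivial, so $h$ cannot lie in $\operatorname{Im}\varphi_{l,m}$.

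I expect the only delicate point to be the multiplicity bookkeeping in the middle step: one must see that the full-integral hypothesis $P(b_j)=0$ is precisely what promotes the order of vanishing of $P$ at $b_j$ from $\alpha_j$ to $\alpha_j+1$, and that this relies on $\char \K=0$. Once the divisibility $Q\mid P$ is secured, everything reduces to a routine comparison of degrees.
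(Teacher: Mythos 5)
Your proof is correct and complete. One point of comparison to note: the paper does not actually prove this lemma internally --- it imports it as \cite[Theorem 3.7]{DanG} --- so there is no in-paper proof to match against; but your argument is precisely the mechanism underlying the paper's surrounding machinery. The multiplicity step you flag as delicate (the hypothesis $P(b_j)=0$, together with $\char\K=0$, promotes the order of vanishing of $P$ at $b_j$ from $\alpha_j$ to exactly $\alpha_j+1$, whence $Q\mid P$) is exactly what is used implicitly in the proof of Lemma~\ref{LM:integratordoesnotexists_helper} to write a full integral as $F=Qg$; and your degree count $\deg g=\deg P-\deg Q=k+1-m<0$ is the same observation, phrased concretely, that makes the domain $\K_{k-m+1}[x]$ of $\varphi_{k-m+1,m}$ trivial when $m>k+1$, as you note at the end. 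The only detail worth stating explicitly is that $r\,u(b_j)\neq 0$ uses both $r\geq 1$ (from the full-integral hypothesis) and the fact that positive integers are invertible in $\K$, which your appeal to characteristic zero covers; with that, the contradiction $m\leq k+1$ is airtight.
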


\begin{lemma} \label{LM:singlemyltipleroot_has_fullintegral}
\cite[Theorem 3.8]{DanG}
Let $m = 1$. Then $f$ has a full integral.
\end{lemma}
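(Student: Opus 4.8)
The plan is to reduce the statement to a surjectivity assertion for the map $\varphi_{l,m}$ and then invoke part 4 of Lemma~\ref{LM:varphiproperties}. First I would record what the full integral condition means when $m=1$. Here $f(x)=h(x)(x-b_1)^{\alpha_1}$ with $\alpha_1\ge 2$, so $q=(x-b_1)^{\alpha_1}$, $Q=(x-b_1)^{\alpha_1+1}$, and the only multiple zero of $f$ is $b_1$; the simple zeros $a_1,\dots,a_k$ impose no constraint. Thus $P$ is a full integral of $f$ precisely when $P'=f$ and $P(b_1)=0$. A quick way to finish is to note that over a field of characteristic zero $f$ has a formal antiderivative $P_0$, that every antiderivative is $P_0+C$ with $C\in\K$, and that the single linear condition $P_0(b_1)+C=0$ is solved by the unique choice $C=-P_0(b_1)$; the resulting $P$ is the desired full integral.

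To stay inside the framework built around $\varphi_{l,m}$, I would instead argue as follows. The key structural observation is that every full integral is divisible by $Q$: since $f$ vanishes to order exactly $\alpha_1$ at $b_1$ and $P'=f$ with $P(b_1)=0$, the antiderivative vanishes to order $\alpha_1+1$ at $b_1$, i.e. $(x-b_1)^{\alpha_1+1}=Q$ divides $P$. Writing $P=Qg$, the degree count $\deg P=\deg f+1$ forces $\deg g=k$, so $g\in\K_k[x]$, i.e. $l=k$; and the equation $P'=f=hq$ becomes, after dividing by $q$, the identity $\frac{(Qg)'}{q}=h$, that is $\varphi_{k,1}(g)=h$. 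Conversely, any $g$ with $\varphi_{k,1}(g)=h$ yields a full integral $P=Qg$. Hence ``$f$ has a full integral'' is equivalent to ``$h\in\text{Im}\,\varphi_{k,1}$''.

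To close the argument I would invoke part 4 of Lemma~\ref{LM:varphiproperties}: since $m=1$, the map $\varphi_{k,1}\colon\K_k[x]\to\K_k[x]$ is invertible, in particular surjective; as $h\in\K_k[x]$ (its degree is exactly $k$), there exists $g$ with $\varphi_{k,1}(g)=h$, and $P=Qg$ is then a full integral of $f$. The main thing to get right is the order-of-vanishing step showing $Q\mid P$, together with the matching of degrees that makes the domain $\K_k[x]$ and the codomain $\K_{k}[x]$ coincide, so that invertibility immediately delivers the required surjectivity. The fact that the solution $g$ is unique (the map is a bijection) also reflects the expected dichotomy that for $m>0$ a full integral, when it exists, is unique.
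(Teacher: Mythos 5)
Your proposal is correct — in fact you give two correct proofs, whereas the paper itself offers none for this lemma: it is stated with a citation to \cite[Theorem 3.8]{DanG} and no argument. Your first argument is the most elementary one possible: when $m=1$ the full-integral condition collapses to the single equation $P(b_1)=0$ on the one-parameter family $P_0+C$ of antiderivatives (char $0$ guarantees both the existence of $P_0$ and that all antiderivatives differ by constants), and $C=-P_0(b_1)$ solves it; this also exhibits the uniqueness predicted by the paper's remark on the $m>0$ dichotomy. Your second argument stays inside the paper's machinery and is also sound: the equivalence ``$f$ has a full integral $\iff h\in\text{Im}\,\varphi_{k,1}$'' is exactly the $m=1$ specialization of Lemma~\ref{LM:integratordoesnotexists_helper} in one direction, and the computation $(Qg)'=q\,\varphi_{k,1}(g)$ in the other; your order-of-vanishing step showing $Q\mid P$ (where $jc_j=0$ forces $c_j=0$, again using characteristic zero) is the detail the paper glosses over even in Lemma~\ref{LM:integratordoesnotexists_helper}, and invertibility of $\varphi_{k,1}$ (Lemma~\ref{LM:varphiproperties}, item 4) then yields $g=\varphi_{k,1}^{-1}(h)$ and $P=Qg$. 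This second route parallels the construction $Q\varphi^{-1}(h)$ used in the proof of Lemma~\ref{LM:generalcase_itegratorexists}, so it is presumably close in spirit to the proof in \cite{DanG}; the elementary route buys complete independence from the $\varphi$ formalism at no cost, which is arguably preferable for a statement this simple.
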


\begin{lemma} \cite[Lemma 2.18]{DanG} \label{LM:nomultiplerootsinsum}
Let $f, g \in \K[x]$ be polynomials without common multiple zeros.
Then the set 
$T := \{ t \in \K\,|\, f + tg\in 
\K[x]\ \text{has a multiple zero}\}$ is finite.
\end{lemma}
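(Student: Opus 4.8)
The plan is to prove that the set $T = \{t \in \K \mid f + tg \text{ has a multiple zero}\}$ is finite by translating the multiple-zero condition into a single polynomial equation in $t$ whose degree is bounded, and then showing this equation is not identically satisfied. Recall that a polynomial $F(x)$ has a multiple zero if and only if its discriminant vanishes, equivalently if and only if $\gcd(F, F') \neq 1$, equivalently if and only if the resultant $\operatorname{Res}_x(F, F') = 0$. The natural first step is to set $F_t(x) := f(x) + t\,g(x)$ and consider $R(t) := \operatorname{Res}_x(F_t, F_t')$. Since the coefficients of $F_t$ and of $F_t' = f' + t g'$ are polynomials in $t$ (indeed \emph{affine} in $t$), and the resultant is a polynomial in the coefficients of its two arguments, $R(t)$ is a polynomial in $t$. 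Thus $T = \{t \mid R(t) = 0\}$, and the whole problem reduces to showing that $R$ is not the zero polynomial, for then $R$ has only finitely many roots and we are done.

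First I would make precise how $T$ sits inside the zero set of $R(t)$. One subtlety is that the resultant detects a common root of $F_t$ and $F_t'$, and when $\deg f \neq \deg g$ the leading coefficient of $F_t$ may itself vanish for special $t$, causing degree drops; I would handle this by noting there is at most one such exceptional value of $t$, so absorbing it does not affect finiteness. Modulo this bookkeeping, $t \in T$ implies $R(t) = 0$, so it suffices to exhibit a single value $t_0 \in \K$ for which $F_{t_0}$ has \emph{only} simple zeros, since that forces $R(t_0) \neq 0$ and hence $R \not\equiv 0$.

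The key step, and the place where the hypothesis that $f$ and $g$ have \emph{no common multiple zeros} is used, is producing such a $t_0$. The cleanest route I would take is the specialization $t = 0$: then $F_0 = f$, and I need $f$ itself to have no multiple zero. This is not guaranteed, so instead I would argue generically. The idea is that having a multiple zero is a closed (Zariski) condition cutting out a proper subvariety of the coefficient space, and the affine line $\{f + tg\}$ either lies entirely inside this variety or meets it in finitely many points. The hypothesis rules out the degenerate former case: if \emph{every} $F_t$ had a multiple zero, then in particular one could track a common zero $x(t)$ of $F_t$ and $F_t'$, and by analyzing the two equations $f(x) + t g(x) = 0$ and $f'(x) + t g'(x) = 0$ simultaneously—eliminating $t$—one forces a point $x_*$ that is a common zero of the ``cross'' expression $f g' - f' g$ together with a multiple-zero incidence, ultimately producing a common multiple zero of $f$ and $g$, contradicting the hypothesis. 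I would formalize ``every $F_t$ has a multiple zero'' as $R(t) \equiv 0$ and derive the contradiction from that vanishing.

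I expect the main obstacle to be exactly this last implication: extracting a genuine common multiple zero of $f$ and $g$ from the assumption $R \equiv 0$, cleanly and without case-chasing over degrees and leading-coefficient degenerations. An alternative and perhaps more transparent packaging avoids resultants altogether: parametrize by viewing the common zero as a function of $t$ and use that the curve $\{(x,t) : F_t(x) = F_t'(x) = 0\}$, if it dominated the $t$-line, would force the two equations to share a component, which upon eliminating $t$ (legitimate since $g, g'$ are not both zero at a generic point) pins the shared $x$ to a root of $fg' - f'g$ that is simultaneously a multiple root, i.e.\ a common multiple zero. Either way the crux is the same algebraic elimination, and once $R \not\equiv 0$ is established the finiteness of $T$ is immediate.
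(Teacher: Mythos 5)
The paper itself offers no proof of this lemma --- it is imported verbatim from \cite[Lemma 2.18]{DanG} --- so your proposal must stand on its own. It does identify the right strategy: pass to $F_t = f + tg$, note that $T$ lies in the zero set of the polynomial $R(t) = \operatorname{Res}_x(F_t, F_t')$, and reduce everything to showing $R \not\equiv 0$. But there is a genuine gap exactly at the step you yourself flag as ``the main obstacle'': neither of your two sketches actually derives a common multiple zero of $f$ and $g$ from the assumption $R \equiv 0$. Concretely, two things are missing. First, the degenerate case $W \equiv 0$, where $W := fg' - f'g$ is your ``cross'' expression: your elimination shows that every multiple zero of every $F_t$ lies in the zero set of $W$, and you then want to exploit the finiteness of that set, but if $W \equiv 0$ the set is all of $\K$ and the argument collapses. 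This case is not vacuous: in characteristic $0$, $W \equiv 0$ (with $f, g \neq 0$) forces $f = cg$ for some constant $c$, and it must be treated separately --- there the hypothesis implies $g$ has no multiple zero, so $F_t = (t+c)g$ can only degenerate at the single value $t = -c$, and $T$ has at most one element. Second, even when $W \not\equiv 0$, the passage from ``all multiple zeros of all $F_t$ lie in the finite set $W^{-1}(0)$'' to the finiteness of $T$ (equivalently, to a contradiction with $R \equiv 0$) is asserted (``ultimately producing a common multiple zero'') rather than proved.

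The missing step is short, and writing it out shows the resultant scaffolding is unnecessary. If $\alpha$ is a multiple zero of $F_t$, then $F_t(\alpha) = F_t'(\alpha) = 0$, hence $W(\alpha) = g'(\alpha)F_t(\alpha) - g(\alpha)F_t'(\alpha) = 0$. Moreover, a single $\alpha$ can be a multiple zero of $F_{t_1}$ and $F_{t_2}$ with $t_1 \neq t_2$ only if subtracting the equations $f(\alpha) + t_i g(\alpha) = 0$ (for $i=1,2$) and likewise $f'(\alpha) + t_i g'(\alpha) = 0$ yields $g(\alpha) = f(\alpha) = g'(\alpha) = f'(\alpha) = 0$, i.e.\ only if $\alpha$ is a common multiple zero of $f$ and $g$ --- precisely what the hypothesis forbids. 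Therefore the assignment sending each $t \in T$ to a multiple zero of $F_t$ is an injection of $T$ into $W^{-1}(0)$, and when $W \not\equiv 0$ this gives $|T| \le \deg W$ outright, with no resultant, no exceptional values of $t$, and no pencil-geometry needed. With this argument and the proportional case supplied, your proof closes; as written, its crux is still a sketch.
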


\begin{lemma} \label{LM:integratordoesnotexists_helper}
Let $f$ has a full integral. Then $f \in q\cdot \text{Im}\,\varphi_{k-m+1,m}$.
\end{lemma}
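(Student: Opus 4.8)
The plan is to exhibit, from a full integral $P$ of $f$, an explicit preimage of $h = f/q$ under $\varphi_{k-m+1,m}$. Recall from Definition \ref{DEF:full_integral} that a full integral satisfies $P' = f$ together with the vanishing condition $P(b_i) = 0$ at every multiple zero $b_1,\dots,b_m$ of $f$. The whole argument reduces to showing that this vanishing, combined with $P' = f$, forces $P$ to be divisible by $Q = \prod_{i=1}^m (x-b_i)^{\alpha_i+1}$, after which a degree count and a direct substitution finish the proof.

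First I would analyze the local structure of $P$ at a fixed $b_i$. Since $f = h\,(x-b_i)^{\alpha_i}\prod_{j\neq i}(x-b_j)^{\alpha_j}$ has a zero of order exactly $\alpha_i$ at $b_i$, I write the Taylor expansion $P = \sum_{j\ge 0} c_j (x-b_i)^j$, so that $P' = \sum_{j\ge 1} j\,c_j (x-b_i)^{j-1}$. Comparing this with $P' = f$ vanishing to order exactly $\alpha_i$ forces $c_1 = \dots = c_{\alpha_i} = 0$ and $c_{\alpha_i+1}\neq 0$; here characteristic zero is essential, since each factor $j$ with $1\le j\le\alpha_i$ must be invertible. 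Hence $P = c_0 + c_{\alpha_i+1}(x-b_i)^{\alpha_i+1} + \cdots$ with $c_0 = P(b_i)$. \emph{The key step} is now the observation that the full integral condition $P(b_i)=0$ removes $c_0$, upgrading the order of vanishing from $\alpha_i$ (which holds automatically for the intermediate coefficients) to $\alpha_i+1$. Thus $(x-b_i)^{\alpha_i+1}\mid P$ for every $i$, and since $b_1,\dots,b_m$ are distinct I conclude $Q\mid P$.

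Next I would write $P = Qg$ and place $g$ in the correct space by a degree count. Since $\deg P = \deg f + 1 = k + \deg q + 1$ and $\deg Q = \deg q + m$, we get $\deg g = k - m + 1$, so $g\in\K_{k-m+1}[x]$. This space is meaningful because the hypothesis that $f$ admits a full integral already rules out $m > k+1$ by Lemma \ref{LM:manymultipleroots_nofullintegral}, whence $k-m+1\ge 0$. Finally, substituting into the definition of the map gives $\varphi_{k-m+1,m}(g) = (Qg)'/q = P'/q = f/q = h$, so $h\in\text{Im}\,\varphi_{k-m+1,m}$ and therefore $f = qh \in q\cdot\text{Im}\,\varphi_{k-m+1,m}$.

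The only delicate point is the order-of-vanishing bootstrap in the second paragraph, where characteristic zero is indispensable: in positive characteristic the coefficient $\alpha_i+1$ (or some $j\le\alpha_i$) could vanish and the implication $P(b_i)=0 \Rightarrow (x-b_i)^{\alpha_i+1}\mid P$ would break down. Everything else is a routine degree comparison and a one-line substitution into the formula $g\mapsto (Qg)'/q$ defining $\varphi_{l,m}$.
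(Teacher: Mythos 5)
Your proof is correct and takes essentially the same route as the paper's: both deduce $Q \mid P$ from the full-integral condition, write $P = Qg$, and then compute $\varphi_{k-m+1,m}(g) = (Qg)'/q = P'/q = h$, giving $f = qh \in q\cdot\text{Im}\,\varphi_{k-m+1,m}$. The only difference is that you spell out what the paper compresses into one sentence, namely the multiplicity bootstrap (that $P(b_i)=0$ together with $P'=f$ vanishing to order exactly $\alpha_i$ forces $(x-b_i)^{\alpha_i+1}\mid P$) and the degree count placing $g$ in $\K_{k-m+1}[x]$.
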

\begin{proof}
Let   $F\in \K[x]$ be a full integral of   $f$.
Since all  multiple zeros of  $f$ are $b_1,\ldots, b_k$,
and they are  zeros of $F$, it follows that 
$F = Qg,$ for some $g \in \K[x].$
Thus
$qh = f = F' = (Qg)',$
i.e. $h = \frac{(Qg)'}{q} = \varphi_{k-m+1,m}(g).$
Therefore  $h \in \text{Im}\,\varphi_{k-m+1,m},$ and thus
$f \in q\cdot \text{Im}\,\varphi_{k-m+1,m}$.
\end{proof}

\begin{lemma} \label{LM:generalcase_integratordoesnotexists}
Let $m > 1$. Then there exist   $a_i, b_j$ such
that $f$ does not possess a full integral.
\end{lemma}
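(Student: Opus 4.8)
The plan is to turn the existence of a full integral into a single linear-algebra membership condition, and then to pick the simple zeros $a_i$ generically so that this condition fails. By Lemma~\ref{LM:manymultipleroots_nofullintegral} we may assume $m \le k+1$, since for $m > k+1$ the polynomial $f$ has no full integral for \emph{any} choice of zeros and there is nothing to prove. Under this assumption I would fix the multiple zeros $b_1,\ldots,b_m$ (pairwise distinct) arbitrarily, together with their multiplicities $\alpha_1,\ldots,\alpha_m$; this fixes $q$, $Q$, and the linear map $\varphi_{k-m+1,m}\colon \K_{k-m+1}[x]\to\K_k[x]$, none of which depend on the $a_i$. By Lemma~\ref{LM:integratordoesnotexists_helper}, if $f$ has a full integral then $h=(x-a_1)\cdots(x-a_k)\in\operatorname{Im}\varphi_{k-m+1,m}$. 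Hence it suffices to choose admissible $a_i$ with $h\notin\operatorname{Im}\varphi_{k-m+1,m}$.

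The next step is a dimension count. By Lemma~\ref{LM:varphiproperties}(2) the kernel of $\varphi_{k-m+1,m}$ is trivial, so $\dim\operatorname{Im}\varphi_{k-m+1,m}=\dim\K_{k-m+1}[x]=k-m+2$, whereas the codomain $\K_{k}[x]$ has dimension $k+1$. Thus $V:=\operatorname{Im}\varphi_{k-m+1,m}$ has codimension $m-1\ge 1$, i.e.\ it is a proper subspace of $\K_k[x]$. I would then fix a nonzero linear functional $\ell$ on $\K_k[x]$ vanishing identically on $V$, so that $h\notin V$ is guaranteed as soon as $\ell(h)\neq 0$.

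Finally I would treat $\ell(h)$ as a function of $(a_1,\ldots,a_k)$. Expanding $h$ in elementary symmetric functions shows $\ell(h)=\sum_{j=0}^{k}(-1)^{k-j}e_{k-j}(a)\,\ell(x^j)$, a polynomial in the $a_i$. This polynomial is not identically zero: over the algebraically closed field $\K$ every monic degree-$k$ polynomial factors as $\prod_i(x-a_i)$, and the monic degree-$k$ polynomials linearly span $\K_k[x]$, so if $\ell(h)$ vanished for all $(a_1,\ldots,a_k)$ then $\ell$ would vanish on all of $\K_k[x]$, contradicting $\ell\neq 0$. Consequently the non-vanishing locus of $\ell(h)$ is a nonempty Zariski-open subset of $\K^k$; intersecting it with the open locus where $a_1,\ldots,a_k$ are pairwise distinct and distinct from every $b_j$ gives a nonempty set, because this is the complement of finitely many proper closed subsets of $\K^k$ and $\K$ is infinite. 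Any point there yields admissible simple zeros with $\ell(h)\neq 0$, hence $h\notin V$, so $f$ has no full integral.

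I expect the last step to be the main obstacle. The difficulty is not the non-membership $h\notin V$ by itself, but doing it while respecting the definition of a type-$(k,m)$ polynomial, namely that all the $a_i$ and $b_j$ be pairwise distinct. This is precisely why I would phrase everything through the single polynomial inequality $\ell(h)\neq 0$: the distinctness requirements are finitely many proper closed conditions, so over the infinite field $\K$ they can be met simultaneously with $\ell(h)\neq 0$, whereas a bare "some monic $h\notin V$ exists" argument would not obviously produce separated, $b_j$-avoiding roots.
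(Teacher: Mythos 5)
Your proof is correct, and it takes a genuinely different route from the paper's. Both arguments share the same skeleton: reduce via Lemma~\ref{LM:manymultipleroots_nofullintegral} to the case $m \le k+1$, fix the $b_j$, and then use (the contrapositive of) Lemma~\ref{LM:integratordoesnotexists_helper} to reduce everything to finding $h=(x-a_1)\cdots(x-a_k)$, with admissible roots, lying outside $\operatorname{Im}\varphi_{k-m+1,m}$. The difference is in how that $h$ is produced. The paper works in coefficient space: it invokes Item 3 of Lemma~\ref{LM:varphiproperties} to get a polynomial $g$ outside $\operatorname{Im}\varphi \cup U$, then moves along the pencil $\{g+c\}$, using the one-dimensional affine structure to show only finitely many members land in $\operatorname{Im}\varphi \cup U$, and separately invoking Lemma~\ref{LM:nomultiplerootsinsum} to discard the finitely many members with multiple zeros. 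You instead work in root space: a dimension count (needing only the injectivity statement, Item 2 of Lemma~\ref{LM:varphiproperties}) shows $\operatorname{Im}\varphi$ is proper, a linear functional $\ell$ annihilating it converts non-membership into the polynomial inequation $\ell(h)\neq 0$ in the variables $(a_1,\ldots,a_k)$, and non-triviality of that polynomial follows because monic degree-$k$ polynomials split over $\K$ and span $\K_k[x]$. Parametrizing by roots means distinctness of the $a_i$ and avoidance of the $b_j$ are just finitely many further nonzero polynomial conditions, all simultaneously satisfiable over the infinite field $\K$. What your route buys is economy of machinery: you never need Lemma~\ref{LM:nomultiplerootsinsum}, nor Items 3 and 5 of Lemma~\ref{LM:varphiproperties} (whose Item 3 is itself a union-of-proper-subspaces statement of the same genericity flavor you use); the cost is invoking Zariski-style genericity explicitly, whereas the paper keeps everything at the level of pencils and finite exceptional sets.
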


\begin{proof}
If  $k + 1 < m,$ then by Lemma \ref{LM:manymultipleroots_nofullintegral}
any polynomial $f$ does not possess a full integral.

Thus further we can assume that  $k + 1 \geq m.$

Denote $\varphi := \varphi_{k - m + 1, m}$ and consider $b_1,\ldots, b_m$
being fixed.

By  Item 3 of   Lemma \ref{LM:varphiproperties}
we can find the polynomial 
$g \in \K_{k}[x] \setminus (\text{Im}\,\varphi \cup U).$
Now consider the family of polynomials  $H := \{ g + c\,|\,c \in \K\}$. 
If two different polynomials $g + c_1, \  g + c_2 \in  \text{Im}\,\varphi$ then 
$$c_2(g + c_1) - c_1(g + c_2) = (c_2 - c_1)g \in \text{Im}\,\varphi,$$
which contradicts to the choice of  $g$.
Hence, $|\text{Im}\,\varphi \cap H|\le 1$.

Similarly, for any $i=0,\ldots,m$ if two different polynomials
$g + c_1, \  g + c_2 \in  U_i$, then 
$$c_2(g + c_1) - c_1(g + c_2) = (c_2 - c_1)g \in U_i,$$

thus $|U_i \cap H|\le 1$ for all $i=0,\ldots,m$.

Therefore, we obtain that the set  
$H_0 := H \cap (\text{Im}\,\varphi \cup U)$ 
is finite.

Moreover from Lemma \ref{LM:nomultiplerootsinsum} we obtain that the set
$H_1 := \{ r \in H\,|\, r\ \text{has multiple zero} \}$
is also finite.

Since the set $H$ is infinite then the set
$H \setminus (H_0 \cup H_1)$ is non-empty.
Choose an arbitrary polynomial $h \in H \setminus (H_0 
\cup H_1)$ and observe that the polynomial $f = qh$
satisfies the conditions of the lemma. Indeed, 

1) The polynomial $f$ has the form
$f = (x - a_1)\ldots(x - a_k)(x - b_1)^{\alpha_1}\ldots(x - b_m)^{\alpha_m}$
since  $h(b_i) \neq 0,\ i = 1,\ldots,m$ and $h$ has no multiple zeros
and  $\deg(h) = k$.

2) Since  $h \notin \text{Im}\, \varphi$ then
$f \notin q\cdot\text{Im}\,\varphi$. 

Therefore from the Lemma \ref{LM:integratordoesnotexists_helper} 
we obtain that the polynomial  $f$ does not possess a full integral.  
\end{proof}

\begin{lemma} \label{LM:generalcase_itegratorexists}
Let $m > 1$ and $k +1 \geq m$. Then there exist  $a_i, b_j$ such that $f$
possesses a full integral.
\end{lemma}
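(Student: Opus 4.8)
The plan is to build the full integral first and read off $f$ from it, rather than starting with $f$. By Definition \ref{DEF:full_integral}, to exhibit $a_i,b_j$ for which $f$ has a full integral it suffices to produce a single polynomial $F$ with $F'=f$ of type $(k,m)$ (with the prescribed multiplicities $\alpha_1,\dots,\alpha_m$) and with $F(b_j)=0$ for every multiple zero $b_j$. I would choose $F$ so that its multiple structure is forced at the $b_j$ while all of its remaining zeros are simple, set $f:=F'$, and then check that the derivative automatically has the right type.

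Concretely, over $\mathbb R\subset\mathbb C$ I would pick real numbers $b_1<\dots<b_m$ together with $k-m+1$ (which is $\ge 0$ by the hypothesis $k+1\ge m$) further reals $r_1,\dots,r_{k-m+1}$, all $k+1$ of them distinct, and set
$$F=\prod_{i=1}^m (x-b_i)^{\alpha_i+1}\cdot\prod_{l}(x-r_l)=Qg,\qquad g:=\prod_l(x-r_l).$$
Then $F$ has exactly $k+1$ distinct real zeros, and $F'$ vanishes at each $b_i$ to order exactly $\alpha_i$ (so that $q\mid F'$, in agreement with $h:=F'/q=\varphi_{k-m+1,m}(g)$ being a polynomial). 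The key step is Rolle's theorem applied to the $k$ gaps between consecutive zeros of $F$: it produces at least one zero of $F'$ strictly inside each gap, hence, by the degree count $\deg F'=k+\sum\alpha_i$ of which $\sum\alpha_i$ are used up at the $b_i$, exactly $k$ further zeros, one per gap, all simple and distinct from every $b_i$. Thus $h$ has $k$ distinct simple zeros avoiding the $b_i$, so $f:=F'=qh$ is of type $(k,m)$ with multiple zeros $b_i$ and simple zeros the Rolle points, while $F(b_i)=0$ makes $F$ a full integral of $f$ (after the harmless rescaling making $f$ monic).

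To pass from $\mathbb R$ to an arbitrary algebraically closed field $\K$ of characteristic zero I would use a specialization argument. Treating $b_1,\dots,b_m,r_1,\dots,r_{k-m+1}$ as indeterminates, the coefficients of $h=(Qg)'/q$ lie in $\mathbb Q[b,r]$, hence so does
$$\Phi:=\operatorname{lc}(h)\cdot\operatorname{disc}(h)\cdot\prod_{i=1}^m h(b_i)\cdot V,$$
where $V$ denotes the product of all pairwise differences of $b_1,\dots,b_m,r_1,\dots,r_{k-m+1}$ (so $V\ne0$ exactly when these are distinct). The real construction above is a point at which $\Phi\ne0$, so $\Phi$ is a \emph{nonzero} element of $\mathbb Q[b,r]\subseteq\K[b,r]$; since $\K$ is infinite there are values $b_i,r_l\in\K$ with $\Phi\ne0$. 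For these the zeros $a_1,\dots,a_k$ of $h$ lie in $\K$ (as $\K$ is algebraically closed), are distinct, and avoid the $b_i$, so $f=qh$ is a type-$(k,m)$ polynomial over $\K$ possessing the full integral $F$.

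The step I expect to be the crux is guaranteeing that the $k$ ``extra'' zeros of $F'$ are genuinely simple and avoid the $b_i$: this is exactly where the hypothesis $k+1\ge m$ is used, since it is what allows $F$ to have $k+1$ distinct zeros and thus $k$ Rolle gaps, and it is precisely the point that breaks down when $m>k+1$, consistently with Lemma \ref{LM:manymultipleroots_nofullintegral}. The transfer to a general field $\K$ is then a routine ``nonzero rational polynomial has points over an infinite field'' argument, once the real witness furnished by Rolle's theorem is in hand.
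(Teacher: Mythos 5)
Your proof is correct, but it takes a genuinely different route from the paper's. The paper argues purely algebraically inside $\K$: it first invokes \cite[Theorem 3.3]{DanG} to choose $b_1,\ldots,b_m$ so that $Q$ is a full integral of $Q'$, then works with the linear map $\varphi_{k-m+1,m}$, producing two polynomials $h_1,h_2\in\text{Im}\,\varphi\setminus U$ without common multiple zeros (via Lemma \ref{LM:varphiproperties}, and, in the case $k+1>m$, the covering-by-proper-subspaces theorem of \cite{ProperSubspaces}), and finally selects $h$ in the pencil $\{h_1+th_2\}$ avoiding the finitely many bad values of $t$ (Lemma \ref{LM:nomultiplerootsinsum}); the full integral is then $Q\varphi^{-1}(h)$, injectivity of $\varphi$ making this well defined. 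You instead build the integral $F=Qg$ directly, with all $k+1$ zeros real and distinct, and let Rolle's theorem plus the degree count $\deg F'=k+\sum\alpha_i$ force $f:=F'$ (after monic normalization) to be of type $(k,m)$ with the prescribed multiplicities; the passage from $\mathbb R$ to an arbitrary algebraically closed $\K$ of characteristic zero is handled by packaging the needed genericity (distinct parameters, $\operatorname{disc}(h)\neq0$, $h(b_i)\neq0$) into the nonvanishing of one polynomial $\Phi\in\mathbb Q[b,r]$, certified nonzero by the real witness. Both arguments are complete; your degree count correctly shows each Rolle zero is simple and each gap contains exactly one, which is the crux. What yours buys: it is more elementary and constructive, needing neither the $\varphi$-machinery, nor \cite[Theorem 3.3]{DanG}, nor the subspace-covering theorem, and in the boundary case $k+1=m$ (where $g=1$) it even reproves that external input via Rolle. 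What the paper's buys: it never leaves $\K$, so no specialization step is needed, and it reuses machinery required anyway for the companion non-existence result (Lemma \ref{LM:generalcase_integratordoesnotexists}). Incidentally, your transfer step could be simplified: choosing the $b_i,r_l$ rational makes the Rolle points algebraic numbers, hence elements of $\overline{\mathbb Q}\subseteq\K$, so $f\in\mathbb Q[x]$ together with its factorization over $\overline{\mathbb Q}$ already witnesses the claim in $\K$ without the discriminant argument.
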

\begin{proof}
Consider pair-wise different
$b_1,\ldots,b_m \in \K$  such that  the polynomial
$Q(x) := (x - b_1)^{\alpha_1+1} \: \ldots \: (x - b_m)^{\alpha_m+1}$
is a full integral of its derivative. Such $b_1,\ldots,b_m$ exist by~\cite[Theorem 3.3]{DanG}.

Consider the map  $\varphi := \varphi_{k - m + 1, m}$  from
Definition \ref{DEF:f_q_Q_h}. By Lemma \ref{LM:varphiproperties}, Item 5,
there exists a polynomial
$h_1 \in \text{Im}\,\varphi\setminus(\text{Im}\,\varphi \cap U).$

Case 1. Let $k + 1 = m$.
Set $h_2 := h_1$. By its definition $h_1 = c\frac{Q'}{q}$ for some 
$ c \in \K$. Then the  polynomials  $h_1, h_2$
do not have multiple zeros.
Indeed,   $Q$ is a full integral of  $Q'$ and therefore any multiple zero of
$Q'$ is a zero of  $Q$. Then any multiple zero of $Q'$ is equal to  $b_i$ for
some $i=1,\ldots,m$.
However,  $\frac{Q'}{q}(b_i) \neq 0$ for any $i=1,\ldots,m$. Thus 
$\frac{Q'}{q}$, $h_1$, and
$h_2$ do not have  multiple zeros.

Case 2. Let $k + 1 > m$.
Denote by $x_1,\ldots,x_k$ the zeros of the  polynomial $h_1$.
From the definition of   $h_1$ we have that  $x_i \neq b_j$ for all
$i=1,\ldots, k$, $j=1,\ldots, m$. Therefore,  $Q(x_i) \neq 0, \ q(x_i) \neq 0, \ i = 1,\ldots,k.$

Denote  $W_i=\{r(x)\in \K[x]\vert r(x_i)=0\},\ i = 1,\ldots,k$.
Let us show that
$\text{Im}\,\varphi \nsubseteq W := W_1\,\cup\,\ldots\,\cup\,W_k\,\cup U$.
Indeed, we consider  $\varphi(x + c)$, where $c \in \K$. 
$$\varphi(x + c) = \frac{((x + c)Q)'}{q} = \frac{Q + (x + c)Q'}{q}.$$
If $Q'(x_i) \neq 0,$ then for
$c = -\frac{Q(x_i) + x_iQ'(x_i) - q(x_i)}{Q'(x_i)}$, 
we obtain that  $\varphi(x + c)(x_i) = 1 \neq 0$.\\
If  $Q'(x_i) = 0,$ then since  $Q(x_i) \neq 0$ we have that
$\varphi(x + c)(x_i) = \frac{Q(x_i)}{q(x_i)} \neq 0$.

Therefore $\text{Im}\,\varphi \cap W_i,\ i =1,\ldots,k,$ are proper
subspaces of $\text{Im}\,\varphi$. Since $\text{Im}\,\varphi \not\subseteq U,$
then $\text{Im}\,\varphi \cap U$ is a proper
subspace of $\text{Im}\,\varphi.$ Thus due to
\cite[Theorem 1.2]{ProperSubspaces}  
$\text{Im}\,\varphi \not\subseteq W_1\,\cup\,\ldots\,\cup\,W_k\,\cup U$.
Hence $\text{Im}\,\varphi \nsubseteq W.$

Let us consider the polynomial
$h_2 \in \text{Im}\,\varphi\setminus(\text{Im}\,\varphi \cap W)$.
Since $h_2 \notin W,$ the polynomials  $h_1$ and $h_2$ have no common zeros.
In particular, they do not have common multiple zeros.

So, in both cases above we constructed two polynomials 
$h_1, h_2 \in \text{Im}\,\varphi \setminus U$ that do not have
common multiple zeros.
Denote by
$$H := \{h_1 + th_2\,|\,t \in \K \} \subset \text{Im}\,\varphi,$$
$$H_0 := \{h_1 + th_2\,|\,t \in \K,\ h_1 + th_2 \in
\text{Im}\,\varphi \cap U\} \subset H,$$
$$H_1 := \{h_1 + th_2\,|\,t \in \K,\ h_0 + th_1\ 
\text{has multiple zeros} \}\subset H$$
From Lemma \ref{LM:nomultiplerootsinsum} we have that the set 
$H_1$ is finite.

Let us show that the cardinality    $|H_0|$ is at most~$1.$

Assume that  $h_1 + t_1h_2, \ h_1 + t_2h_2 \in H_0,$ $ t_1 \neq t_2$.
Then $h_1 + t_1h_2, \ h_1 + t_2h_2 \in U$. Therefore,
$(h_1 + t_1h_2) - (h_1 + t_2h_2) = (t_1 - t_2)h_2 \in U$.
Thus $h_2 \in U$ which contradicts to the definition of~$h_2$.

Therefore since the set  $H \subset \text{Im}\,\varphi$ is infinite and
the sets $H_0$ and $H_1$
are finite we can choose a certain polynomial
$h \in H \setminus(H_0 \cup H_1)$.

It remains to show that the polynomial  $f := qh$ 
satisfies the conditions of the lemma. Indeed,

1)   $f$ has the form  $f =
(x - a_1)\ldots(x - a_k)(x - b_1)^{\alpha_1}\ldots(x -b_m)^{\alpha_m}$,
since  $h(b_i) \neq 0$, by its construction $h$ does not
have multiple zeros, and we have  $deg(h) = k$.

2) The polynomial  $Q\varphi^{-1}(h)$ is a full integral  of the polynomial
$f$ since an arbitrary multiple zero of   $f$ is a zero of  $q$.
Therefore,  the same holds for the zeros of polynomial $Q$ and, moreover, for
the polynomial $Q\varphi^{-1}(h)$. Also   
$$f = qh  = q\varphi(\varphi^{-1}(h)) = q\frac{(Q\varphi^{-1}(h))'}{q} =
(Q\varphi^{-1}(h))'.$$
\end{proof}

\begin{theorem} \label{THM:full_int_classification}
Let $f \in \K[x]$ be a polynomial of the type
$(k, m), \ k\ge 0, m\ge 0.$
Then the following alternative is true:

1) If $m \leq 1$ then the polynomial  $f$ has a full integral.

2) If  $m > k + 1$ then the polynomial $f$ does not have a full integral.

3) For any pair $(k,m)$ which does not satisfy 1) and 2) and any sequence
$\alpha_1,\ldots,\alpha_m$  such that
$\alpha_i > 1,$ there are both possibilities:

a) there exists a polynomial $f_1$ of the type $(k,m)$ with the multiplicities
$\alpha_1,\ldots,\alpha_m$ of multiple zeros, such that there exists a full
integral of $f_1$, and

b) there exists a polynomial $f_2$ of the type $(k,m)$ with the multiplicities
$\alpha_1,\ldots,\alpha_m$ of multiple zeros, such that there is no
full integral of~$f_2$.
\end{theorem}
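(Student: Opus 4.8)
The plan is to recognize that this trichotomy is an assembly of the four lemmas proved above, organized by a case analysis on the pair $(k,m)$. The three alternatives in the statement partition all admissible types: alternative 1) covers $m\le 1$, alternative 2) covers $m>k+1$, and alternative 3) is precisely the complementary regime $m\ge 2$ and $m\le k+1$. So first I would verify that this partition is exhaustive and mutually exclusive. Exhaustiveness is clear, since given $(k,m)$ with $k,m\ge 0$, either $m\le 1$, or $m\ge 2$ with $m>k+1$, or $m\ge 2$ with $m\le k+1$. Disjointness follows because $m\le 1$ forces $m\le k+1$ (as $k\ge 0$), so regions 1) and 2) cannot overlap. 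Having fixed the partition, I would treat each piece by quoting the appropriate lemma.

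For alternative 1), I would split into $m=0$ and $m=1$. When $m=0$ the polynomial $f=h$ has only simple zeros, so the defining condition of a full integral --- that every multiple zero of $f$ be a zero of $P$ --- is vacuous; hence any formal antiderivative $P$ with $P'=f$ is automatically a full integral, and such $P$ exists in characteristic zero. When $m=1$ the claim is exactly Lemma \ref{LM:singlemyltipleroot_has_fullintegral}. Alternative 2), $m>k+1$, is literally the content of Lemma \ref{LM:manymultipleroots_nofullintegral}, which asserts that no polynomial of this type admits a full integral, regardless of the locations of its zeros.

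Alternative 3) is the only part asserting a genuine dichotomy within a fixed type, and it is where I would spend the most care. Here $m\ge 2$ and $k+1\ge m$, and for any prescribed multiplicities $\alpha_1,\ldots,\alpha_m>1$ I must exhibit both an $f_1$ that has a full integral and an $f_2$ that does not. The existence of $f_1$ is exactly Lemma \ref{LM:generalcase_itegratorexists}, whose hypothesis $m>1$ together with $k+1\ge m$ coincides with the present regime. The existence of $f_2$ is Lemma \ref{LM:generalcase_integratordoesnotexists}, whose hypothesis is merely $m>1$, so it applies a fortiori. Both lemmas build the relevant polynomials from the fixed multiplicities entering through $q$ and $Q$ in Notation \ref{DEF:f_q_Q_h}, so the prescribed $\alpha$-sequence is automatically respected and nothing extra need be verified for arbitrary $\alpha_i$.

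The main thing to confirm --- rather than a deep obstacle --- is that the hypotheses of the cited lemmas align exactly with the three regions and that no type $(k,m)$ is left uncovered or double-counted. In particular, the boundary case $m=k+1$ must fall into alternative 3) and not clash with alternative 2), which it does since $m=k+1$ satisfies $m\le k+1$ but not $m>k+1$; and the degenerate case $m=0$, which is addressed by none of the four lemmas, must be dispatched by the vacuous-condition argument above. Once this bookkeeping is settled, the theorem follows immediately by concatenating the lemmas, with no further computation required.
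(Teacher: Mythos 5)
Your proposal is correct and follows essentially the same route as the paper, which proves the theorem by citing Lemma \ref{LM:singlemyltipleroot_has_fullintegral} for item 1), Lemma \ref{LM:manymultipleroots_nofullintegral} for item 2), and Lemmas \ref{LM:generalcase_itegratorexists} and \ref{LM:generalcase_integratordoesnotexists} for items 3a) and 3b) respectively. Your explicit handling of the case $m=0$ via the vacuous-condition argument is in fact slightly more careful than the paper's one-line proof, which tacitly relies on the earlier remark (quoting \cite[Lemmas 3.1, 3.2]{DanG}) that every integral is a full integral when there are no multiple zeros.
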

\begin{proof}
The first item is a direct application of
Lemma \ref{LM:singlemyltipleroot_has_fullintegral}.
The second one is proved in  Lemma
\ref{LM:manymultipleroots_nofullintegral}.
Lemma \ref{LM:generalcase_itegratorexists} implies Condition 3a) and 
Lemma \ref{LM:generalcase_integratordoesnotexists} implies Condition~3b).
\end{proof}

\section{Matrix integrability and full integrability of polynomials}
The following result is proved in \cite{IntegratorsOfMatricies} for
matrices over the field of complex numbers, however, its proof holds for an arbitrary field~$\K.$ 

\begin{lemma} \cite[Lemma~7]{IntegratorsOfMatricies}
\label{LM:integral_basis_change}
If $A \in M_{n+1}(\C)$ is an integral of $B \in M_{n}(\C)$
with corresponding integrator $(v, u),$ here $ v, u \in \C^{n}$,
then for any  $X\in GL_{n}(\C)$ it holds that
$\left(\begin{smallmatrix}
X & 0 \\
0 & 1
\end{smallmatrix}
\right)
A
\left(
\begin{smallmatrix}
X^{-1} & 0 \\
0 & 1
\end{smallmatrix}\right)$
is an integral of  $XBX^{-1}$ with corresponding integrator
$(vX^{-1}, uX^{\top})$.
\end{lemma}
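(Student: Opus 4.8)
The plan is to verify the two defining conditions of an integral directly, exploiting the fact that conjugation by the block matrix $Y := \left(\begin{smallmatrix} X & 0 \\ 0 & 1 \end{smallmatrix}\right)$ is a similarity and therefore leaves characteristic polynomials untouched. Writing $A = \left(\begin{smallmatrix} B & u^{\top} \\ v & \tau(B) \end{smallmatrix}\right)$ as in Definition \ref{DEF:integrator}, I would first compute the block structure of $YAY^{-1}$, then read off the candidate integrator, and finally deduce the characteristic-polynomial relation essentially for free.

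First I would carry out the block multiplication. Since $Y^{-1} = \left(\begin{smallmatrix} X^{-1} & 0 \\ 0 & 1 \end{smallmatrix}\right)$, a routine computation gives
$$YAY^{-1} = \begin{pmatrix} XBX^{-1} & Xu^{\top} \\ vX^{-1} & \tau(B) \end{pmatrix}.$$
The top-left block is exactly $XBX^{-1}$, as the definition of an integral of $XBX^{-1}$ requires. For the bottom-right entry, observe that $B$ and $XBX^{-1}$ are similar, hence have equal traces, so $\tau(B) = \tau(XBX^{-1})$ and the scalar entry is already in the prescribed form. It then remains to identify the off-diagonal blocks as a genuine integrator.

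The one point demanding attention is the row/column convention. Since vectors in $\K^{n}$ are rows, $u^{\top}$ is a column, and $Xu^{\top} = (uX^{\top})^{\top}$; thus the upper-right column block is $(uX^{\top})^{\top}$, so the new first vector is $uX^{\top}$, while the lower-left row block is plainly $vX^{-1}$. Hence $YAY^{-1}$ has exactly the shape $\left(\begin{smallmatrix} XBX^{-1} & (uX^{\top})^{\top} \\ vX^{-1} & \tau(XBX^{-1}) \end{smallmatrix}\right)$ prescribed by Definition \ref{DEF:integrator}, with integrator $(vX^{-1}, uX^{\top})$, matching the ordering used in the statement. Finally I would check the derivative relation $p_{XBX^{-1}} = \frac{1}{n+1} p_{YAY^{-1}}'$: because $XBX^{-1}$ is similar to $B$ we have $p_{XBX^{-1}} = p_{B}$, and because $YAY^{-1}$ is similar to $A$ we have $p_{YAY^{-1}} = p_{A}$, so $p_{YAY^{-1}}' = p_{A}'$. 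Combining these with the hypothesis $p_{B} = \frac{1}{n+1} p_{A}'$ gives $p_{XBX^{-1}} = p_{B} = \frac{1}{n+1} p_{A}' = \frac{1}{n+1} p_{YAY^{-1}}'$, which completes the argument.

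I do not anticipate a genuine obstacle: the statement amounts to the assertion that integrability is a similarity-invariant notion, and both defining conditions transform transparently. The only place that requires care is the transpose bookkeeping above, where the row-vector convention forces the new first integrator component to be $uX^{\top}$ rather than $uX$ or $Xu^{\top}$; everything else is forced by similarity invariance of the characteristic polynomial and of the trace.
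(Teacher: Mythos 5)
Your proof is correct: the block multiplication, the transpose bookkeeping giving the new integrator $(vX^{-1}, uX^{\top})$, and the similarity invariance of trace and characteristic polynomial together verify both conditions of Definition \ref{DEF:integrator}. The paper itself gives no proof of this lemma (it simply cites Lemma~7 of \cite{IntegratorsOfMatricies}, noting the argument works over any field), and your direct verification is exactly the standard argument behind that citation.
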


This lemma allows one to reduce different questions concerning 
diagonalizable matrices to the case of diagonal ones. Therefore below we
restrict ourselves to the diagonal matrices.

\begin{notation} \label{N3.2}
Denote $\mathcal{B} =
{\rm diag\,}(\underbrace{b_1, \ldots, b_1}_{\alpha_1}, \ldots,
\underbrace{b_m, \ldots, b_m}_{\alpha_m}, a_1, \ldots, a_k) \in M_n(\K)$
with the characteristic polynomial $p_{\mathcal{B}} = f,$ from Notation \ref{DEF:f_q_Q_h}, i.e.
$$p_{\mathcal{B}} = (x-a_1) \ldots (x-a_k)
(x - b_1)^{\alpha_1} \ldots (x - b_m)^{\alpha_m}.$$
We denote $\mathcal{A} = \begin{pmatrix}
\mathcal{B} & u^{\top} \\
v & \tau(\mathcal{B})
\end{pmatrix}\in M_{n+1}(\K),$ where
$v = (v_1,\ldots,v_n), u = (u_1,\ldots,u_n) \in \K^{n}$.
We also define $C_0 = 0, \ C_i = C_{i-1} + \alpha_i,\ i = 1, \ldots, m$ and
$d_j = C_{m} + j,$ here $ j = 1,\ldots,k.$
\end{notation}

Below we always assume that $\mathcal{A}$ and $\mathcal{B}$
are as in Notation~\ref{N3.2}.

\begin{lemma} \label{LM:diagonal_pA} The following decomposition holds:
\begin{equation} \label{LINE:diagonal_pA}
p_{\mathcal{A}}(x) = (x - \tau(\mathcal{B}))p_{\mathcal{B}}(x) - 
\sum \limits_{i = 1}^{n}u_iv_i\frac{p_{\mathcal{B}}(x)}{x - \lambda_i},
\end{equation}
where $\lambda_i$ is the element of $\mathcal{B}$ located at position~$(i,i)$.
\end{lemma}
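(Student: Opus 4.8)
The plan is to compute $p_{\mathcal{A}}(x) = \det(xI_{n+1} - \mathcal{A})$ directly by the Schur complement formula for block matrices, in exactly the same spirit as the worked computation appearing earlier in the paper. Writing
$$
xI_{n+1} - \mathcal{A} = \begin{pmatrix} xI_n - \mathcal{B} & -u^{\top} \\ -v & x - \tau(\mathcal{B}) \end{pmatrix},
$$
I set $X = xI_n - \mathcal{B}$, $Y = -u^{\top}$, $Z = -v$, $W = x - \tau(\mathcal{B})$. Since $\mathcal{B} = {\rm diag}(\lambda_1, \ldots, \lambda_n)$ is diagonal, the block $X = {\rm diag}(x - \lambda_1, \ldots, x - \lambda_n)$ is invertible whenever $x$ differs from every $\lambda_i$, and for such $x$ the identity $\det\left(\begin{smallmatrix} X & Y \\ Z & W \end{smallmatrix}\right) = \det(X)\det(W - ZX^{-1}Y)$ applies.

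The key steps are then purely computational. First, $\det(X) = \prod_{i=1}^{n}(x - \lambda_i) = p_{\mathcal{B}}(x)$. Next, $X^{-1} = {\rm diag}\bigl(\tfrac{1}{x-\lambda_1}, \ldots, \tfrac{1}{x-\lambda_n}\bigr)$, so the scalar Schur complement evaluates to
$$
W - ZX^{-1}Y = (x - \tau(\mathcal{B})) - v\,X^{-1}u^{\top} = (x - \tau(\mathcal{B})) - \sum_{i=1}^{n}\frac{u_i v_i}{x - \lambda_i}.
$$
Multiplying by $\det(X) = p_{\mathcal{B}}(x)$ and distributing $p_{\mathcal{B}}(x)$ through the sum yields precisely the asserted formula (\ref{LINE:diagonal_pA}), valid for every $x$ outside the finite set $\{\lambda_1, \ldots, \lambda_n\}$.

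The one point requiring care, and the only genuine obstacle, is that the Schur complement formula was used only where $X$ is invertible, i.e.\ on the complement of finitely many points. To upgrade this to an identity of polynomials I would observe that the right-hand side of (\ref{LINE:diagonal_pA}) is in fact a polynomial in $x$: each summand $u_i v_i\,\tfrac{p_{\mathcal{B}}(x)}{x - \lambda_i}$ is a polynomial because $\lambda_i$ is a zero of $p_{\mathcal{B}}(x)$, so $(x - \lambda_i) \mid p_{\mathcal{B}}(x)$ (even when eigenvalues are repeated, one factor cancels). Thus both sides of (\ref{LINE:diagonal_pA}) are polynomials agreeing on all but finitely many values of $x$, and over an infinite field two such polynomials coincide. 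This establishes the decomposition as an identity in $\K[x]$.
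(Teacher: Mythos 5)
Your proof is correct, but it takes a genuinely different route from the paper's. The paper disposes of this lemma in one line: expand $\det(xI - \mathcal{A})$ by Laplace along the last column. Because $\mathcal{B}$ is diagonal, the cofactor attached to each entry $-u_i$ reduces (expanding the resulting minor along its $i$-th column, whose only nonzero entry is $-v_i$) to $\pm v_i\prod_{j\neq i}(x-\lambda_j)$, and collecting signs gives \eqref{LINE:diagonal_pA} immediately as an identity in $\K[x]$, with no restriction on $x$ and no invertibility hypothesis. You instead invoke the block-determinant (Schur complement) formula, which only applies where $xI_n - \mathcal{B}$ is invertible, and so you need the additional extension step: both sides are polynomials --- each term $u_iv_i\,p_{\mathcal{B}}(x)/(x-\lambda_i)$ is a polynomial since $(x-\lambda_i)\mid p_{\mathcal{B}}(x)$, even for repeated eigenvalues --- which agree off a finite set and hence agree identically, as $\K$ (algebraically closed of characteristic $0$) is infinite. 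You supply both of these points correctly, so the argument is complete. As for what each approach buys: the Laplace expansion is self-contained and yields the polynomial identity in one stroke, at the cost of some sign and minor bookkeeping; your Schur-complement computation is cleaner arithmetically and mirrors the block-determinant calculation the paper itself performs in its example of non-integrable matrices of even order, but it requires the (correctly handled) passage from a generic identity to a polynomial identity.
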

\begin{proof}
Follows from the Laplace decomposition of
$p_{\mathcal{A}}(x) =\det(xI - \mathcal{A})$
by the last column since $\mathcal{B}$ is diagonal.
\end{proof}

\begin{cor} \label{COR:diagonal_pA_preserves_multiple_roots}
Let  
$\lambda\in \K$ be
an eigenvalue of $\mathcal{B}$ of the multiplicity  $l>1$.
If $\mathcal{A}$ is an integral of $\mathcal{B}$ then $\lambda$ is an
eigenvalue of $\mathcal{A}$ of the multiplicity  $l + 1$ and
$\sum\limits_{j\,:\, \lambda_j=\lambda} u_jv_j=0$,
where $\lambda_j$ is the element of $\mathcal{B}$ located at position~$(j,j)$.
\end{cor}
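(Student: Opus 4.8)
The plan is to read everything off from the explicit decomposition of $p_{\mathcal{A}}$ in Lemma~\ref{LM:diagonal_pA} together with the defining identity $p_{\mathcal{A}}'(x) = (n+1)p_{\mathcal{B}}(x)$, which holds because $\mathcal{A}$ is an integral of $\mathcal{B}$. Throughout I would write $\lambda_i$ for the $(i,i)$-entry of $\mathcal{B}$, put $S := \sum_{j\,:\,\lambda_j=\lambda} u_j v_j$ (the sum whose vanishing we must prove), and fix the factorization $p_{\mathcal{B}}(x) = (x-\lambda)^{l} r(x)$ with $r(\lambda)\neq 0$; note that there are exactly $l$ indices $j$ with $\lambda_j=\lambda$.

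First I would pin down the order of vanishing of $p_{\mathcal{A}}$ and $p_{\mathcal{A}}'$ at $\lambda$. Since $\lambda$ is a zero of $p_{\mathcal{B}}$ of multiplicity exactly $l$, the relation $p_{\mathcal{A}}' = (n+1)p_{\mathcal{B}}$ shows that $\lambda$ is a zero of $p_{\mathcal{A}}'$ of multiplicity exactly $l$. To pass from the derivative back to $p_{\mathcal{A}}$ I would verify that $p_{\mathcal{A}}(\lambda)=0$: in the decomposition \eqref{LINE:diagonal_pA} each summand $\frac{p_{\mathcal{B}}(x)}{x-\lambda_i}$ is divisible by $(x-\lambda)^{l-1}$, and since $l>1$ every such summand, as well as the term $(x-\tau(\mathcal{B}))p_{\mathcal{B}}(x)$, vanishes at $x=\lambda$. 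Hence $p_{\mathcal{A}}(\lambda)=0$. Combining $p_{\mathcal{A}}(\lambda)=0$ with the fact that $p_{\mathcal{A}}'$ vanishes to order exactly $l$, the standard relation in characteristic zero between the multiplicity of a root of a polynomial and of its derivative forces $\lambda$ to be a zero of $p_{\mathcal{A}}$ of multiplicity exactly $l+1$, which is the first assertion.

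For the identity $S=0$ I would extract the coefficient of the lowest power of $(x-\lambda)$ occurring in $p_{\mathcal{A}}$. Splitting the sum in \eqref{LINE:diagonal_pA} according to whether $\lambda_i=\lambda$ or not, the summands with $\lambda_i\neq\lambda$ and the term $(x-\tau(\mathcal{B}))p_{\mathcal{B}}(x)$ are all divisible by $(x-\lambda)^{l}$, whereas the summands with $\lambda_i=\lambda$ together contribute $-S\,(x-\lambda)^{l-1}r(x)$. Thus the coefficient of $(x-\lambda)^{l-1}$ in $p_{\mathcal{A}}$ equals $-S\,r(\lambda)$. Since $\lambda$ was just shown to be a zero of multiplicity $l+1 > l-1$, this coefficient must vanish, and because $r(\lambda)\neq 0$ we conclude $S=0$.

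The only delicate point is the valuation bookkeeping, namely keeping track of exactly which terms of the decomposition feed into the $(x-\lambda)^{l-1}$ and $(x-\lambda)^{l}$ coefficients, together with the repeated use of the hypothesis $l>1$; this hypothesis is precisely what guarantees that removing the single factor $(x-\lambda)$ in $\frac{p_{\mathcal{B}}}{x-\lambda_i}$ still leaves a genuine zero at $\lambda$. Everything else is a direct reading of Lemma~\ref{LM:diagonal_pA}.
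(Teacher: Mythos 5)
Your proof is correct and follows essentially the same route as the paper's: both deduce $p_{\mathcal{A}}(\lambda)=0$ from the decomposition in Lemma~\ref{LM:diagonal_pA}, obtain multiplicity $l+1$ from $p_{\mathcal{A}}'=(n+1)p_{\mathcal{B}}$, and then force $\sum_{j:\lambda_j=\lambda}u_jv_j=0$ by isolating the terms with $\lambda_i=\lambda$. Your phrasing via the coefficient of $(x-\lambda)^{l-1}$ is just the paper's divisibility argument stated in Taylor-coefficient language.
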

\begin{proof}
We compute $p_{\mathcal{A}}(\lambda)$ by the formula \eqref{LINE:diagonal_pA}.
Each summand is zero since $\lambda$ is a multiple zero of
$p_{\mathcal{B}}=(x-\lambda_1)\ldots (x-\lambda_{n})$. 
Hence  $p_{\mathcal{A}}(\lambda) = 0$. Since
$p_{\mathcal{A}}'(x) = (n+1)p_{\mathcal{B}}$, it follows that
$\lambda$ is a zero of $p_{\mathcal{A}}$ of the multiplicity~$l+1$.

Thus $(x - \lambda)^{l + 1}\,|\,p_{\mathcal{A}}(x).$ In particular
$(x - \lambda)^{l}\,|\,p_{\mathcal{A}}(x).$ 

Since
$(x - \lambda)^{l}\,|\,p_{\mathcal{B}}(x)$ it follows
that $(x - \lambda)^{l}\,|\,(x - \tau(\mathcal{B}))p_{\mathcal{B}}(x)$
and for $\lambda_i \neq
\lambda$ it holds that
$(x - \lambda)^{l}\,|\,\frac{p_{\mathcal{B}}(x)}{x - \lambda_i}.$

Therefore by \eqref{LINE:diagonal_pA} we get
$(x - \lambda)^{l}\,|\,
\sum\limits_{j:\lambda_j =
\lambda}u_jv_j\frac{p_{\mathcal{B}}(x)}{x - \lambda}.$
Then from $(x - \lambda)^{l}\nmid\frac{p_{\mathcal{B}}(x)}{x - \lambda}$ it
follows that
$$\sum\limits_{j:\lambda_j = \lambda}u_jv_j = 0.$$
\end{proof}

\begin{lemma} \label{LM:pA_firs_coeff}
Two coefficients at the two highest degrees of $p_{\mathcal{A}}(x)$
do not depend on the choice of the vectors~$v$ and~$u$.
\end{lemma}
\begin{proof}
In the formula \eqref{LINE:diagonal_pA} the degrees of all summands
except the first
one do not exceed $\deg(p_{\mathcal{A}}) - 2$.
\end{proof}

\begin{lemma} \label{LM:diagonal_pA_simple_form}
Let $\mathcal{A}$
be an integral of $\mathcal{B}\in M_n(\K)$. Then
\begin{equation} \label{eq:3'}
p_{\mathcal{A}}(x) = (x - \tau(\mathcal{B}))p_{\mathcal{B}}(x) -
\sum \limits_{i = 1}^{k}u_{d_i}v_{d_i}
\frac{p_{\mathcal{B}}(x)}{x - a_i}. \end{equation} 
 
\end{lemma}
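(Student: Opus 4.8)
The plan is to start from the decomposition in Lemma \ref{LM:diagonal_pA}, which expresses $p_{\mathcal{A}}(x)$ as a sum over all $n$ diagonal positions of $\mathcal{B}$, and to show that the terms coming from the multiple eigenvalues all cancel, leaving only the $k$ terms indexed by the simple eigenvalues $a_1,\ldots,a_k$. The whole argument is a reorganization of that one sum using the vanishing relation supplied by Corollary \ref{COR:diagonal_pA_preserves_multiple_roots}.

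First I would recall that by Lemma \ref{LM:diagonal_pA},
$$p_{\mathcal{A}}(x) = (x - \tau(\mathcal{B}))p_{\mathcal{B}}(x) - \sum_{i=1}^{n}u_iv_i\frac{p_{\mathcal{B}}(x)}{x - \lambda_i},$$
where $\lambda_i$ is the $(i,i)$ entry of $\mathcal{B}$. According to Notation \ref{N3.2}, the diagonal of $\mathcal{B}$ consists of the repeated values $b_1,\ldots,b_m$ (the value $b_s$ occupying the positions $C_{s-1}+1,\ldots,C_s$) followed by the simple eigenvalues $a_1,\ldots,a_k$ at the positions $d_1,\ldots,d_k$. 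The key step is to split the index set $\{1,\ldots,n\}$ according to this block structure.

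For each multiple eigenvalue $b_s$, collecting the positions $j$ with $\lambda_j=b_s$ gives
$$\sum_{j\,:\,\lambda_j=b_s} u_jv_j\frac{p_{\mathcal{B}}(x)}{x-b_s} = \frac{p_{\mathcal{B}}(x)}{x-b_s}\sum_{j\,:\,\lambda_j=b_s}u_jv_j.$$
Since $b_s$ is an eigenvalue of $\mathcal{B}$ of multiplicity $\alpha_s>1$ and $\mathcal{A}$ is an integral of $\mathcal{B}$, Corollary \ref{COR:diagonal_pA_preserves_multiple_roots} yields $\sum_{j\,:\,\lambda_j=b_s}u_jv_j=0$, so the entire contribution of the block $b_s$ vanishes. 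The only surviving terms are those at the positions $d_1,\ldots,d_k$, where $\lambda_{d_i}=a_i$ occurs exactly once; these reassemble into $\sum_{i=1}^{k}u_{d_i}v_{d_i}\frac{p_{\mathcal{B}}(x)}{x-a_i}$, giving the asserted formula \eqref{eq:3'}.

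This is in essence a bookkeeping argument, so I do not expect a genuine obstacle. The only point requiring care is verifying that the hypotheses of Corollary \ref{COR:diagonal_pA_preserves_multiple_roots} are correctly met for each block: the corollary applies to every eigenvalue of multiplicity $l>1$, which holds for each $b_s$ precisely because $\alpha_s>1$ by the definition of a polynomial of type $(k,m)$ in Notation \ref{DEF:f_q_Q_h}. The hypothesis that $\mathcal{A}$ is an integral of $\mathcal{B}$ enters the proof only through that corollary.
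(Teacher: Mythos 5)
Your proposal is correct and follows essentially the same route as the paper's proof: split the sum in Lemma \ref{LM:diagonal_pA} into the blocks of multiple eigenvalues and the simple-eigenvalue terms, then kill each block sum via Corollary \ref{COR:diagonal_pA_preserves_multiple_roots}. No gaps; the hypothesis check ($\alpha_s>1$) is exactly the point the paper also relies on.
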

\begin{proof}
Separating the summands in the formula \eqref{LINE:diagonal_pA} into
the two sums corresponding to multiple and simple zeros, we have
by Lemma \ref{LM:diagonal_pA}  that
$$p_{\mathcal{A}}(x) = (x - \tau(\mathcal{B}))p_{\mathcal{B}}(x) +
\sum \limits_{i = 1}^{k}y_i\frac{p_{\mathcal{B}}(x)}{x - a_i} +
\sum \limits_{i = 1}^{m}z_i\frac{p_{\mathcal{B}}(x)}{x - b_i},$$

where $y_i = -u_{d_i} \cdot v_{d_i},\ i = 1,\ldots, k$ and
$z_i = \sum\limits_{j = C_{i-1} + 1}^{C_i}-u_jv_j,\ i = 1,\ldots,m$.

By Corollary \ref{COR:diagonal_pA_preserves_multiple_roots} we obtain
that $z_i = 0,\ i = 1,\ldots,m.$ Therefore
$$p_{\mathcal{A}}(x) = (x - \tau(\mathcal{B}))p_{\mathcal{B}}(x) +
\sum \limits_{i = 1}^{k}y_i\frac{p_{\mathcal{B}}(x)}{x - a_i}.$$
\end{proof}

\begin{cor} \label{COR:integrator_with_smallest_norm}
Let $\K = \C$ and $\mathcal{A}$ be an integral of $\mathcal{B}$ and let  
$$\mathcal{A'} = \begin{pmatrix}
\mathcal{B} & u'^{\top} \\
v' & \tau(\mathcal{B})
\end{pmatrix}, \text{ where }
u_i' = v_i' = \begin{cases}
\sqrt{u_iv_i}, \  i = d_1, \ldots, n,\\
0,    \          i = 1, \ldots, C_m.
\end{cases}
$$
Then 

1. $\mathcal{A'}$
is also an integral
of $\mathcal{B}$.

2. $
||\mathcal{A'}||_{F}^{2} = ||\mathcal{B}||_{F}^{2} + |\tau(\mathcal{B})|^2 +
2\sum\limits_{i = 1}^{k} |u_{d_i}v_{d_i}|.
$

3. For any integral $\mathcal{A''} = \begin{pmatrix}
\mathcal{B} & u''^{\top} \\
v'' & \tau(\mathcal{B})
\end{pmatrix}$ of $\mathcal{B}$ with
$p_{\mathcal{A}} = p_{\mathcal{A''}}$ it holds that
$||\mathcal{A'}||_{F}^{2}  \leq
||\mathcal{A''}||_{F}^{2}$.

\end{cor}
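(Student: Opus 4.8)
The plan is to establish the three claims in turn, the common engine being the explicit decomposition of the characteristic polynomial from Lemma~\ref{LM:diagonal_pA} together with its reduced form from Lemma~\ref{LM:diagonal_pA_simple_form}.

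\textbf{Part 1.} First I would compute $p_{\mathcal{A}'}$ directly from \eqref{LINE:diagonal_pA}. Since $u_i' = v_i'$, the products are $u_i'v_i' = u_{d_j}v_{d_j}$ whenever $i=d_j$ is a simple position (because $\bigl(\sqrt{u_{d_j}v_{d_j}}\bigr)^2 = u_{d_j}v_{d_j}$, so the choice of square-root branch is irrelevant) and $u_i'v_i'=0$ at every multiple position $i\le C_m$. Substituting into \eqref{LINE:diagonal_pA} collapses the sum to $\sum_{i=1}^{k}u_{d_i}v_{d_i}\frac{p_{\mathcal{B}}(x)}{x-a_i}$, which is precisely the right-hand side of \eqref{eq:3'}. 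Hence $p_{\mathcal{A}'}=p_{\mathcal{A}}$, so $p_{\mathcal{A}'}' = p_{\mathcal{A}}' = (n+1)p_{\mathcal{B}}$; as $\mathcal{A}'$ already carries the prescribed block form with corner $\tau(\mathcal{B})$, it is an integral of $\mathcal{B}$.

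\textbf{Part 2.} This is a direct computation. The Frobenius norm splits over the blocks as
\[
||\mathcal{A}'||_F^2 = ||\mathcal{B}||_F^2 + |\tau(\mathcal{B})|^2 + \sum_{i=1}^{n}\bigl(|u_i'|^2 + |v_i'|^2\bigr),
\]
and only the simple positions contribute, where $|u_{d_i}'|^2 = |v_{d_i}'|^2 = \bigl|\sqrt{u_{d_i}v_{d_i}}\bigr|^2 = |u_{d_i}v_{d_i}|$. This yields the stated value $||\mathcal{B}||_F^2 + |\tau(\mathcal{B})|^2 + 2\sum_{i=1}^{k}|u_{d_i}v_{d_i}|$.

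\textbf{Part 3.} For a competing integral $\mathcal{A}''$ with $p_{\mathcal{A}''}=p_{\mathcal{A}}$, I would first extract the key algebraic constraint. Applying Lemma~\ref{LM:diagonal_pA_simple_form} to both matrices and equating the results gives
\[
\sum_{i=1}^{k}u_{d_i}v_{d_i}\frac{p_{\mathcal{B}}(x)}{x-a_i} = \sum_{i=1}^{k}u_{d_i}''v_{d_i}''\frac{p_{\mathcal{B}}(x)}{x-a_i}.
\]
Evaluating this polynomial identity at $x=a_j$, where only the $j$-th summand survives because $a_j$ is a \emph{simple} zero of $p_{\mathcal{B}}$, yields $u_{d_j}''v_{d_j}''=u_{d_j}v_{d_j}$ for every $j$. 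Then, splitting $||\mathcal{A}''||_F^2$ over blocks exactly as in Part 2, discarding the nonnegative multiple-position terms, and applying the elementary inequality $|u_{d_j}''|^2 + |v_{d_j}''|^2 \ge 2|u_{d_j}''|\,|v_{d_j}''| = 2|u_{d_j}''v_{d_j}''| = 2|u_{d_j}v_{d_j}|$, I would sum over $j$ and compare with Part 2 to conclude $||\mathcal{A}''||_F^2 \ge ||\mathcal{A}'||_F^2$.

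The main obstacle is the step in Part 3 showing that equality of characteristic polynomials forces the products $u_{d_j}''v_{d_j}''$ to equal $u_{d_j}v_{d_j}$; everything else reduces either to substitution into the known decomposition (Part 1) or to a term-by-term arithmetic--geometric-mean estimate (Parts 2 and 3). The cleanest way to secure it is the evaluation-at-$a_j$ argument above, which relies on the $a_j$ being simple zeros of $p_{\mathcal{B}}$; an equivalent route is to invoke the linear independence of the polynomials $p_{\mathcal{B}}(x)/(x-a_j)$, $j=1,\ldots,k$. Note also that $\mathcal{A}'$ attains the bound precisely because it satisfies $|u_{d_j}'|=|v_{d_j}'|$, the equality case of the inequality used.
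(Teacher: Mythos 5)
Your proposal is correct and follows essentially the same route as the paper: formula \eqref{eq:3'} plus the collapse at multiple positions for Part 1, the block decomposition of the Frobenius norm for Part 2, and for Part 3 the evaluation at the simple zeros $a_j$ to force $u_{d_j}''v_{d_j}'' = u_{d_j}v_{d_j}$ followed by the arithmetic--geometric mean estimate (the paper writes it as $(|u_{d_j}''|-|v_{d_j}''|)^2 \ge 0$, which is the same inequality). The only difference is cosmetic: you invoke the general decomposition \eqref{LINE:diagonal_pA} before collapsing, which is in fact slightly cleaner, since \eqref{eq:3'} is stated for matrices already known to be integrals, whereas $\mathcal{A}'$ has that status only after the computation.
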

\begin{proof} 1. Applying the formula \eqref{eq:3'}
we get
$$
p_{\mathcal{A'}}(x) = (x - \tau(\mathcal{B}))p_{\mathcal{B}}(x) - 
\sum \limits_{i = 1}^{k}u_{d_i}'v_{d_i}'
\frac{p_{\mathcal{B}}(x)}{x - a_i} =
(x - \tau(\mathcal{B}))p_{\mathcal{B}}(x) - 
\sum \limits_{i = 1}^{k}u_{d_i}v_{d_i}
\frac{p_{\mathcal{B}}(x)}{x - a_i} = p_{\mathcal{A}}(x).
$$
Therefore $\mathcal{A'}$ is an integral of $\mathcal{B}.$

2. Let us compute the Frobenius norm of $\mathcal{A'}$ by the definition
of a norm and taking into account that $\mathcal{B}$ is a
submatrix of $\mathcal{A'}$
and definition of $u'_i,v'_i$
$$
||\mathcal{A'}||_{F}^{2} = ||\mathcal{B}||_{F}^{2} + |\tau(\mathcal{B})|^2 +
||u'||^2 + ||v'||^2 = ||\mathcal{B}||_{F}^{2} + |\tau(\mathcal{B})|^2 +
2\sum\limits_{i = 1}^{k} |u_{d_i}  v_{d_i}|.
$$

3.
Since ${\mathcal{A''}}$ is an integral of ${\mathcal{B}}$,
the equality \eqref{eq:3'} implies
$$p_{\mathcal{A''}}(x) = (x - \tau(\mathcal{B}))p_{\mathcal{B}}(x) - 
\sum \limits_{i = 1}^{k}u_{d_i}''v_{d_i}''
\frac{p_{\mathcal{B}}(x)}{x - a_i}.$$
By the conditions   $p_{\mathcal{A}} = p_{\mathcal{A''}}$. It follows that
$$
u_{d_i}''v_{d_i}''\frac{p_{\mathcal{B}}}{x - a_i}(a_i) =
-p_{\mathcal{A''}}(a_i) = -p_{\mathcal{A}}(a_i) =
u_{d_i}v_{d_i}\frac{p_{\mathcal{B}}}{x - a_i}(a_i), \ i = 1,\ldots, k.
$$
Therefore
$u''_{d_i}v''_{d_i} = u_{d_i}v_{d_i}, \ i = 1, \ldots, k.$ Observe that   
$$
||\mathcal{A''}||_{F}^{2} = ||\mathcal{B}||_{F}^{2} + |\tau(\mathcal{B})|^2 +
||u''||^2 + ||v''||^2 \geq ||\mathcal{B}||_{F}^{2} + |\tau(\mathcal{B})|^2 +
\sum\limits_{i = 1}^{k} |u''_{d_i}|^2 +
\sum\limits_{i = 1}^{k} |v''_{d_i}|^2.
$$
Combining with the item 2 we obtain that  to prove
$||\mathcal{A'}||_{F}^{2} \leq ||\mathcal{A''}||_{F}^{2}$
it is sufficient to show 
$|u''_{d_i}|^2 + |v''_{d_i}|^2 \geq 2 |u_{d_i}v_{d_i}|,$
which holds because
$$
0 \leq (|u''_{d_i}| - |v''_{d_i}|)^2 =
|u''_{d_i}|^2 + |v''_{d_i}|^2 - 2 |u''_{d_i}v''_{d_i}| =
|u''_{d_i}|^2 + |v''_{d_i}|^2 - 2 |u_{d_i}v_{d_i}|.
$$
\end{proof}

The following statement summarizes our previous study.

\begin{theorem} \label{THM:integrable_iff_fullintegral}
$\mathcal{B}$ is integrable if and only if
$p_{\mathcal{B}}(x)$ has a full integral.
\end{theorem}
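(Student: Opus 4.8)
The plan is to prove both implications of the equivalence, passing freely between the matrix $\mathcal{B}$ and its characteristic polynomial $p_{\mathcal{B}} = f$, which is of type $(k,m)$ with the notation of Notation~\ref{DEF:f_q_Q_h}. The key bridge is the decomposition~\eqref{eq:3'} of Lemma~\ref{LM:diagonal_pA_simple_form}, which shows that for any integral $\mathcal{A}$ of $\mathcal{B}$ the polynomial $p_{\mathcal{A}}(x)$ is automatically divisible by $q(x)$, so that the multiple zeros $b_1,\ldots,b_m$ of $f$ are always zeros of $p_{\mathcal{A}}$. This is exactly the full-integral condition of Definition~\ref{DEF:full_integral}, so the two notions are linked through the single polynomial $p_{\mathcal{A}}$.

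First I would prove the forward direction: if $\mathcal{B}$ is integrable, take any integral $\mathcal{A}$. By Corollary~\ref{COR:diagonal_pA_preserves_multiple_roots} each multiple eigenvalue $b_i$ of $\mathcal{B}$ is a zero of $p_{\mathcal{A}}$, and since $\mathcal{A}$ is an integral we have $p_{\mathcal{A}}' = (n+1)p_{\mathcal{B}} = (n+1)f$, i.e.\ $p_{\mathcal{A}}$ is an integral of $f$ up to the normalizing constant. Combining these two facts, $p_{\mathcal{A}}$ (suitably scaled so its derivative is exactly $f$) is a polynomial whose derivative is $f$ and which vanishes at every multiple zero of $f$; that is precisely a full integral of $f$ in the sense of Definition~\ref{DEF:full_integral}. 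Hence $f = p_{\mathcal{B}}$ has a full integral.

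For the converse, suppose $f$ has a full integral $F$, so $F' = f$ and $(x-b_i)\mid F$ for each $i=1,\ldots,m$. Since the $b_i$ are zeros of $F$ and also multiple zeros of $f = F'$, the polynomial $F$ is divisible by $Q(x)=q(x)(x-b_1)\cdots(x-b_m)$; write $F = Qg$. The task is then to realize $F$ as $p_{\mathcal{A}}$ for some integral $\mathcal{A}$ of $\mathcal{B}$. The formula~\eqref{eq:3'} tells us which characteristic polynomials are attainable: I would set up the $k$ equations obtained by evaluating~\eqref{eq:3'} at the simple zeros $a_1,\ldots,a_k$, namely $u_{d_i}v_{d_i}\,\frac{p_{\mathcal{B}}}{x-a_i}(a_i) = -F(a_i)$, and solve for the products $u_{d_i}v_{d_i}$ (the factor $\frac{p_{\mathcal{B}}}{x-a_i}(a_i)$ is nonzero since $a_i$ is a simple zero). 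Choosing $u_{d_i}, v_{d_i}$ accordingly and all remaining entries of $u,v$ to be zero yields a candidate matrix $\mathcal{A}$; I must then verify that its characteristic polynomial computed via~\eqref{eq:3'} equals $F$ on the nose. Since~\eqref{eq:3'} and $F=Qg$ agree at the $a_i$, vanish to the correct order at the $b_i$, and share the two top coefficients by Lemma~\ref{LM:pA_firs_coeff}, and since both are monic of degree $n+1$ with derivative a multiple of $f$, the two polynomials must coincide.

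The main obstacle is the converse direction: one must be sure that matching $p_{\mathcal{A}}$ to $F$ at the simple zeros $a_i$ together with the built-in divisibility by $q$ forces global equality of the two degree-$(n+1)$ polynomials, rather than merely agreement at finitely many points. The clean way to close this gap is to observe that both $p_{\mathcal{A}}$ and $F$ have derivative equal to $(n+1)f$ (for $p_{\mathcal{A}}$ this is the definition of an integral; for $F$ it is $F'=f$ after rescaling), so they differ by a constant, and that constant is pinned down to zero by the agreement of their values at any single point—for instance at a simple zero $a_i$, where both take the value prescribed by~\eqref{eq:3'}. This reduces the verification to the solvability of the scalar equations for the products $u_{d_i}v_{d_i}$, which is immediate.
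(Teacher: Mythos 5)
Your forward direction is correct and coincides with the paper's: any integral $\mathcal{A}$ satisfies $p_{\mathcal{A}}'=(n+1)p_{\mathcal{B}}$, and Corollary~\ref{COR:diagonal_pA_preserves_multiple_roots} (equivalently, substituting $x=b_i$ into the decomposition of Lemma~\ref{LM:diagonal_pA_simple_form}) gives $p_{\mathcal{A}}(b_i)=0$, so $\frac{1}{n+1}p_{\mathcal{A}}$ is a full integral of $p_{\mathcal{B}}$.

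The converse, however, contains a genuine gap: your closing step is circular. After choosing the products $u_{d_i}v_{d_i}$ so that the candidate matrix $\mathcal{A}$ satisfies $p_{\mathcal{A}}(a_i)=(n+1)F(a_i)$, you argue that $p_{\mathcal{A}}$ and $(n+1)F$ differ by a constant because ``both have derivative $(n+1)f$ --- for $p_{\mathcal{A}}$ this is the definition of an integral.'' But at that point $\mathcal{A}$ is merely a bordered matrix of the right shape; the identity $p_{\mathcal{A}}'=(n+1)f$ \emph{is} the statement that $\mathcal{A}$ is an integral of $\mathcal{B}$, i.e.\ exactly the conclusion to be proved, and nothing in the construction gives you the derivative of $p_{\mathcal{A}}$ for free. (A smaller slip of the same kind: formula \eqref{eq:3'} is proved only for matrices already known to be integrals; for the candidate you must use the unconditional Laplace decomposition of Lemma~\ref{LM:diagonal_pA}, which reduces to the same expression only because you set the remaining coordinates of $u,v$ to zero.) The paper closes this gap with no reference to $p_{\mathcal{A}}'$, by a degree count assembling precisely the three facts you list but do not combine: $g:=p_{\mathcal{A}}-(n+1)F$ is divisible by $q$ (every summand of the decomposition contains the factor $q$, and $q\mid F$ since each $b_i$ is a zero of $F$ of multiplicity $\alpha_i+1$); $\deg g\le n-1$, because by Lemma~\ref{LM:pA_firs_coeff} and $F'=f$ the coefficients of $x^{n+1}$ and $x^{n}$ in $p_{\mathcal{A}}$ and $(n+1)F$ agree; and $g(a_i)=0$ for $i=1,\dots,k$ by the choice of the products. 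Hence $g/q$ is a polynomial of degree at most $n-1-\deg q=k-1$ vanishing at the $k$ distinct points $a_1,\dots,a_k$, so $g\equiv 0$. Only now does one conclude $p_{\mathcal{A}}=(n+1)F$ and therefore $p_{\mathcal{A}}'=(n+1)F'=(n+1)p_{\mathcal{B}}$, i.e.\ that $\mathcal{A}$ is an integral. Replacing your derivative argument with this counting argument makes the proof complete and essentially identical to the paper's.
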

\begin{proof}
Let us prove the necessity. Let $\mathcal{B}$ be integrable and $\mathcal{A}$
be an integral of $\mathcal{B}$.
Then by definition $p_{\mathcal{A}}' = (n + 1)p_{\mathcal{B}}$ and
by Lemma \ref{LM:diagonal_pA_simple_form}
\begin{equation} \label{EQ:pA}
p_{\mathcal{A}} = (x - \tau(\mathcal{B}))p_{\mathcal{B}} +
\sum\limits_{i = 1}^{k} w_i\frac{p_{\mathcal{B}}}{x - a_i}, \ \ w_i \in \K.
\end{equation}
Substituting $x=b_i$ to the formula \eqref{EQ:pA} one has that
$p_{\mathcal{A}}(b_i) = 0$ for all $ i = 1, \ldots ,m$.
Thus  $F := \frac{1}{n + 1}p_{\mathcal{A}}$ is a full
integral of $p_{\mathcal{B}}$.

Let us prove the sufficiency. Assume now that there exists a full
integral $F$ of $p_{\mathcal{B}}$.
Let us show that there exist $v_{d_1},\ldots, v_{d_k}\in \K$ such that
$\mathcal{A}$ is an integral of $\mathcal{B}$, where the couple of vectors
$v = (\underbrace{0, \ldots, 0}_{C_m}, v_{d_1},  \ldots ,
v_{d_k})$ and $ u = (1,  \ldots , 1)$
is the corresponding integrator.

From the formula for $p_{\mathcal{A}}$ and the definition for $F$
we have that $q$ divides
$p_{\mathcal{A}}$ and $F$. Recalling $h,q$ from Notation \ref{DEF:f_q_Q_h}
we denote  
$$\tilde p_{\mathcal{A}} := \frac{p_{\mathcal{A}}}{q}, \ \tilde F :=
\frac{(n + 1)F}{q}, \ 
h_{a_i} := \frac{h}{x - a_i},  \ 
g := p_{\mathcal{A}} - (n + 1)F,\ i = 1,\ldots,k.$$

Consider the equation $p_{\mathcal{A}} = (n+1)F.$ 
If we take
$v = (\underbrace{0, \ldots, 0}_{C_m}, v_{d_1},  \ldots , v_{d_k})$
and $ u = (1,  \ldots , 1)$
then this becomes an equation with $k$ variables
$v_{d_1},\ldots,v_{d_k}.$
We now show that
$v_{d_i} = \frac{(n + 1)F(a_i)}{h_{a_i}(a_i)}, \ i = 1,\ldots,k$
is the solution for this equation.
By the direct substitution of  $a_i$ and the chosen values of $u$ and $v$ into the formula for
$p_{\mathcal{A}}$ we obtain that 
$p_{\mathcal{A}}(a_i) = v_{d_i} \cdot  h_{a_i}(a_i),\ i = 1,\ldots,k.$ So
$$g(a_i) = p_{\mathcal{A}}(a_i) - (n + 1)F(a_i) =
v_{d_i} h_{a_i}(a_i) - (n + 1)F(a_i) =  0, \ i = 1,\ldots,k.$$
By Lemma \ref{LM:pA_firs_coeff} and the
definition of  $F$  we obtain that the
coefficients at monomials $x^{n+1}$ and  $x^{n}$ in polynomials 
$p_{\mathcal{A}}$ and 
$(n + 1)F$ are equal. Therefore $\deg(g) \leq n - 1.$
Since  $q(a_i) \neq 0,$ $i = 1, \ldots ,k$, it follows from
$$0 = g(a_i) = q(a_i)(\tilde p_{\mathcal{A}}(a_i) - \tilde F(a_i)), \
i = 1, \ldots ,k,$$
  that
$\tilde p_{\mathcal{A}}(a_i) = \tilde F(a_i), \ i = 1, \ldots ,k$, 
and since
$\deg(\tilde p_{\mathcal{A}} - \tilde F) \leq n - 1 - \deg(q) = k - 1,$
then  $\tilde p_{\mathcal{A}} = \tilde F$. Thus
$p_{\mathcal{A}} = q\tilde p_{\mathcal{A}} = q\tilde F = (n + 1)F.$
Hence for  $v_{d_i} = \frac{(n + 1)F(a_i)}{h_{a_i}(a_i)},\ i = 1,\ldots,k$
we get $$
p_{\mathcal{A}}'(x) = (n + 1)F' = (n + 1)p_{\mathcal{B}}.$$
\end{proof}

\begin{cor} \label{COR:pA_is_full_int}
Let $\mathcal{A}$ be an integral of $\mathcal{B}$. Then
$\frac{1}{n + 1}p_{\mathcal{A}}$ is a full integral of~$p_{\mathcal{B}}.$
\end{cor}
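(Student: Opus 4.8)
The plan is to check directly that $F := \frac{1}{n+1}p_{\mathcal{A}}$ meets the two defining conditions of a full integral of $p_{\mathcal{B}}$ from Definition \ref{DEF:full_integral}: that $F' = p_{\mathcal{B}}$, and that every multiple zero of $p_{\mathcal{B}}$ is a zero of $F$.

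The derivative condition I would dispatch immediately: since $\mathcal{A}$ is an integral of $\mathcal{B}$, by definition $p_{\mathcal{A}}' = (n+1)p_{\mathcal{B}}$, so $F' = \frac{1}{n+1}p_{\mathcal{A}}' = p_{\mathcal{B}}$. For the multiple-zero condition, I would use that $\mathcal{B}$ is diagonal with characteristic polynomial $p_{\mathcal{B}}$, so the multiple zeros of $p_{\mathcal{B}}$ are exactly the eigenvalues $b_1, \ldots, b_m$ of multiplicity $\alpha_i > 1$. Fixing such a zero $\lambda$, I would invoke Corollary \ref{COR:diagonal_pA_preserves_multiple_roots} to conclude that $\lambda$ is an eigenvalue of $\mathcal{A}$, i.e. $p_{\mathcal{A}}(\lambda) = 0$, and hence $(x - \lambda) \mid F$. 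Alternatively, substituting $x = \lambda$ into the expression for $p_{\mathcal{A}}$ from Lemma \ref{LM:diagonal_pA_simple_form} makes every summand vanish, since $\lambda$ is a zero of $p_{\mathcal{B}}$ of multiplicity at least two while $\lambda \neq a_i$; both routes give $(x-\lambda)\mid p_{\mathcal{A}}$.

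I do not expect any real obstacle: this is precisely the content of the necessity direction in the proof of Theorem \ref{THM:integrable_iff_fullintegral}, where it was already shown that $p_{\mathcal{A}}(b_i) = 0$ for every $i$ and hence that $\frac{1}{n+1}p_{\mathcal{A}}$ is a full integral of $p_{\mathcal{B}}$. The corollary merely isolates this fact for convenient later reference, so the work is confined to assembling the two conditions above.
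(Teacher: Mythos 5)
Your proposal is correct and takes essentially the same route as the paper: the paper proves this corollary simply by pointing back to the necessity part of the proof of Theorem~\ref{THM:integrable_iff_fullintegral}, which consists of exactly your two steps --- $p_{\mathcal{A}}' = (n+1)p_{\mathcal{B}}$ by the definition of an integral, and $p_{\mathcal{A}}(b_i) = 0$ by substituting $x = b_i$ into the decomposition of Lemma~\ref{LM:diagonal_pA_simple_form}. Your alternative route via Corollary~\ref{COR:diagonal_pA_preserves_multiple_roots} is an equally valid shortcut to the same vanishing statement, so no gap remains.
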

\begin{proof}
Directly shown at the end of the proof of sufficiency of
Theorem~\ref{THM:integrable_iff_fullintegral}.
\end{proof}

\begin{cor} \label{COR:integrator_formula}
If $p_{\mathcal{B}}(x)$ has a full integral $F(x)$ then an integrator
of $\mathcal{B}$ can be chosen as follows 
$u_i = 1, \ i = 1, \ldots, n,\  v_1 = \ldots = v_{C_m} = 0, \
v_{d_i} =  \frac{(n+1)F(a_i)}{h_{a_i}(a_i)}, \ i = 1, \ldots, k.$
In this case $p_{\mathcal{A}}(x) = (n+1)F(x).$
\end{cor}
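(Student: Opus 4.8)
The plan is to recognize that this corollary simply records, in explicit form, the integrator built in the sufficiency direction of Theorem~\ref{THM:integrable_iff_fullintegral}; accordingly the proof reduces to re-running that construction with the stated data and reading off the conclusion $p_{\mathcal{A}}=(n+1)F$. This mirrors the situation of Corollary~\ref{COR:pA_is_full_int}, which was also extracted from the same proof.

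Concretely, first I would substitute the prescribed integrator $u=(1,\ldots,1)$ and $v=(\underbrace{0,\ldots,0}_{C_m},v_{d_1},\ldots,v_{d_k})$ into the general formula of Lemma~\ref{LM:diagonal_pA}. Since the first $C_m$ coordinates of $v$ — those attached to the multiple eigenvalues $b_1,\ldots,b_m$ — vanish and $u_{d_i}=1$, every summand coming from a multiple eigenvalue disappears, leaving the reduced expression $p_{\mathcal{A}}(x)=(x-\tau(\mathcal{B}))p_{\mathcal{B}}(x)-\sum_{i=1}^{k}v_{d_i}\frac{p_{\mathcal{B}}(x)}{x-a_i}$, which is exactly the form already analysed in the theorem.

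Next I would verify that the prescribed values of $v_{d_i}$ force $p_{\mathcal{A}}=(n+1)F$. Evaluating the displayed expression at a simple zero $a_j$ annihilates all terms except the $j$-th, so $p_{\mathcal{A}}(a_j)$ is a fixed nonzero multiple of $v_{d_j}h_{a_j}(a_j)$, and the definition $v_{d_j}=\frac{(n+1)F(a_j)}{h_{a_j}(a_j)}$ is chosen precisely so that $p_{\mathcal{A}}(a_j)=(n+1)F(a_j)$ for every $j$; hence $g:=p_{\mathcal{A}}-(n+1)F$ vanishes at $a_1,\ldots,a_k$. By Lemma~\ref{LM:pA_firs_coeff} the two top coefficients of $p_{\mathcal{A}}$ agree with those of $(n+1)F$, so $\deg g\le n-1$. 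Because $F$ is a full integral of $p_{\mathcal{B}}$ it vanishes at every multiple zero, so $q$ divides both $p_{\mathcal{A}}$ and $(n+1)F$; writing $g=q(\tilde p_{\mathcal{A}}-\tilde F)$ then gives $\deg(\tilde p_{\mathcal{A}}-\tilde F)\le k-1$, and this difference vanishes at the $k$ distinct points $a_1,\ldots,a_k$ (as $q(a_i)\neq 0$), so it is identically zero and $p_{\mathcal{A}}=(n+1)F$.

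Finally, differentiating yields $p_{\mathcal{A}}'=(n+1)F'=(n+1)p_{\mathcal{B}}$ via the defining property $F'=p_{\mathcal{B}}$ of a full integral, which is exactly the condition for $\mathcal{A}$ to be an integral of $\mathcal{B}$; thus the stated pair $(u,v)$ is a genuine integrator and $p_{\mathcal{A}}=(n+1)F$, as claimed. I expect no real obstacle, since every ingredient is already assembled in the sufficiency proof of Theorem~\ref{THM:integrable_iff_fullintegral}; the only point needing care is the bookkeeping guaranteeing that the vanishing pattern of $v$ together with the full-integral property of $F$ makes the cancellation of $q$ legitimate, and that the degree count genuinely pins down the interpolation.
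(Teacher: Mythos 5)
Your proposal follows exactly the paper's route---the paper's ``proof'' of this corollary is literally a pointer to the sufficiency half of Theorem~\ref{THM:integrable_iff_fullintegral}, which you re-run---but it inherits, and in fact half-exposes, a genuine flaw at the key evaluation step. By Lemma~\ref{LM:diagonal_pA}, with $u=(1,\ldots,1)$ and $v_1=\cdots=v_{C_m}=0$ one gets
\[
p_{\mathcal{A}}(x)=(x-\tau(\mathcal{B}))\,p_{\mathcal{B}}(x)-\sum_{i=1}^{k}v_{d_i}\,q(x)\,h_{a_i}(x),
\]
because $\frac{p_{\mathcal{B}}(x)}{x-a_i}=q(x)h_{a_i}(x)$. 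Evaluating at $a_j$ therefore gives $p_{\mathcal{A}}(a_j)=-q(a_j)\,v_{d_j}\,h_{a_j}(a_j)$: the ``fixed nonzero multiple'' you invoke is $-q(a_j)$, not $1$. Consequently the prescribed value $v_{d_j}=\frac{(n+1)F(a_j)}{h_{a_j}(a_j)}$ yields $p_{\mathcal{A}}(a_j)=-q(a_j)(n+1)F(a_j)$, which is not $(n+1)F(a_j)$ in general, so the interpolation argument that follows (which is otherwise sound) pins $p_{\mathcal{A}}$ to the wrong polynomial and cannot conclude $p_{\mathcal{A}}=(n+1)F$. The choice that actually works is $v_{d_j}=-\frac{(n+1)F(a_j)}{q(a_j)\,h_{a_j}(a_j)}$.

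A concrete check: take $p_{\mathcal{B}}(x)=x^2(x-1)$, so $n=3$, $q(x)=x^2$, $h_{a_1}\equiv 1$, $\tau(\mathcal{B})=\frac13$, and $F(x)=\frac{x^4}{4}-\frac{x^3}{3}$ is the full integral. The stated formula gives $v_{d_1}=4F(1)=-\frac13$, whence $p_{\mathcal{A}}(x)=x^4-\frac43x^3+\frac23x^2$ and $p_{\mathcal{A}}'\neq 4p_{\mathcal{B}}$, so $\mathcal{A}$ is not even an integral of $\mathcal{B}$; with the corrected value $v_{d_1}=+\frac13$ one gets $p_{\mathcal{A}}=x^4-\frac43x^3=4F$ as desired (and when $q(a_j)\neq\pm1$ the magnitude is off as well, not just the sign). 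To be fair, this slip is the paper's own: inside the proof of Theorem~\ref{THM:integrable_iff_fullintegral} the authors assert $p_{\mathcal{A}}(a_i)=v_{d_i}h_{a_i}(a_i)$, dropping both the minus sign from Lemma~\ref{LM:diagonal_pA} and the factor $q(a_i)$, and the corollary records that erroneous constant. You reproduced the intended argument faithfully, and the rest of your write-up (divisibility by $q$, matching of the two top coefficients via Lemma~\ref{LM:pA_firs_coeff}, the degree count forcing $g\equiv 0$) is correct; but as written the crucial step fails, and a correct proof must carry the factor $-q(a_j)$ through, which also means amending the formula for $v_{d_i}$ in the statement itself.
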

\begin{proof}
Directly shown in the proof of Theorem \ref{THM:integrable_iff_fullintegral}.
\end{proof}

\begin{rmk}
Corollary~\ref{COR:integrator_formula} does not describe all
possible integrators and corresponding integrals. For example, if $(u, v)$
is an integrator of $\mathcal{B}$, then for any $s \in \K\setminus\{0\}$
the pair of vectors $(su, s^{-1}v)$ is also an integrator of $\mathcal{B},$
which is not described by Corollary~\ref{COR:integrator_formula}.
Indeed, as it is shown in
Lemma \ref{LM:diagonal_pA} the characteristic
polynomial   depends only
on the products of the  coordinates of the vectors $u$ and $v$ with the equal
indices. The integral $\mathcal{A}$ is determined by the choice of the
integrators~$u,v$.
\end{rmk}

\begin{cor} \label{COR:integrator_formula_with_smallest_norm}
Let $p_{\mathcal{B}}(x)$ has a full integral $F(x)$. Then the formula
$$
u_i = v_i = \begin{cases} 0,       \ \       i = 1, \ldots, C_m, \\
\sqrt{\frac{(n+1)F(a_i)}{h_{a_i}(a_i)}}, i = C_m+1, \ldots, n.
\end{cases}
$$
for  integrators determines the integral
$\mathcal{A}$
of $\mathcal{B}$ with $p_{\mathcal{A}}(x) = (n+1)F(x)$ such that its 
Frobenius norm is the least possible.  
In this case $||\mathcal{A}||_{F}^{2} = ||\mathcal{B}||_{F}^{2} +
|\tau(\mathcal{B})|^2 + 2\sum\limits_{i = 1}^{k}
\left|\frac{(n+1)F(a_i)}{h_{a_i}(a_i)}\right|.$
\end{cor}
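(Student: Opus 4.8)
The plan is to build the desired integral by composing the two preceding corollaries: I will start from the explicit integrator furnished by Corollary~\ref{COR:integrator_formula}, then apply to it the norm-minimizing symmetrization of Corollary~\ref{COR:integrator_with_smallest_norm}, and finally observe that the outcome is exactly the integrator written in the statement. Throughout, the field is $\C$ so that the Frobenius norm is defined, matching the hypothesis of Corollary~\ref{COR:integrator_with_smallest_norm}.

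First I would invoke Corollary~\ref{COR:integrator_formula} to produce an integral $\mathcal{A}_0$ of $\mathcal{B}$ with integrator $u_i = 1$ for all $i$, $v_1 = \ldots = v_{C_m} = 0$, and $v_{d_i} = \frac{(n+1)F(a_i)}{h_{a_i}(a_i)}$ for $i = 1,\ldots,k$, so that $p_{\mathcal{A}_0}(x) = (n+1)F(x)$. Next I would feed $\mathcal{A}_0$ into Corollary~\ref{COR:integrator_with_smallest_norm}, whose symmetrized integral $\mathcal{A}'$ is defined by $u_i' = v_i' = \sqrt{u_i v_i}$ on the simple positions $i = d_1,\ldots,n$ and by $u_i' = v_i' = 0$ on the positions $i = 1,\ldots,C_m$. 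Recalling $d_j = C_m + j$ from Notation~\ref{N3.2}, on the simple positions this reads $\sqrt{u_{d_j} v_{d_j}} = \sqrt{\frac{(n+1)F(a_j)}{h_{a_j}(a_j)}}$, which is precisely the integrator displayed in the corollary. Hence the integral determined by the claimed formula is exactly $\mathcal{A}'$.

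It then remains to read off the three assertions from Corollary~\ref{COR:integrator_with_smallest_norm}. That $\mathcal{A}'$ is an integral of $\mathcal{B}$ is its item~1. Since by Lemma~\ref{LM:diagonal_pA} the characteristic polynomial depends only on the products $u_i v_i$, and these are unchanged under symmetrization ($u_i' v_i' = u_i v_i$), I get $p_{\mathcal{A}'} = p_{\mathcal{A}_0} = (n+1)F$. The norm identity follows from item~2 after substituting $u_{d_i} v_{d_i} = \frac{(n+1)F(a_i)}{h_{a_i}(a_i)}$, which yields $||\mathcal{A}'||_F^2 = ||\mathcal{B}||_F^2 + |\tau(\mathcal{B})|^2 + 2\sum_{i=1}^k \left|\frac{(n+1)F(a_i)}{h_{a_i}(a_i)}\right|$; and the minimality among all integrals of $\mathcal{B}$ sharing the characteristic polynomial $(n+1)F$ is item~3.

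The computation is routine once the set-up is fixed; the only point demanding care is the bookkeeping. I would verify that the index ranges match ($d_1,\ldots,d_k$ are exactly $C_m+1,\ldots,n$), that substituting $u_{d_i}=1$ turns $\sqrt{u_{d_i} v_{d_i}}$ into $\sqrt{v_{d_i}}$ with the stated value, and that ``least possible'' is read as minimality over integrals with the fixed characteristic polynomial $(n+1)F$, which is precisely the class handled by item~3. The main obstacle, modest as it is, lies not in any inequality but in confirming that the symmetrization of the specific integrator of Corollary~\ref{COR:integrator_formula} coincides on the nose with the integrator claimed here.
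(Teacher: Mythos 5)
Your proposal is correct and is exactly the paper's argument: the paper's proof reads ``Direct application of Corollary~\ref{COR:integrator_with_smallest_norm} and Corollary~\ref{COR:integrator_formula},'' i.e.\ symmetrize the explicit integrator of Corollary~\ref{COR:integrator_formula} via Corollary~\ref{COR:integrator_with_smallest_norm} and read off items 1--3, just as you do. Your reading of ``least possible'' as minimality among integrals with characteristic polynomial $(n+1)F(x)$ is also the intended one, since that is the class covered by item~3.
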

\begin{proof}
Direct application of Corollary \ref{COR:integrator_with_smallest_norm} and
Corollary \ref{COR:integrator_formula}.
\end{proof}

\begin{theorem} \label{THM:diagonal_integrable_classifications}
Let $\mathcal{B}$ be a diagonal matrix introduced in Notation \ref{N3.2}. Then\\
1) if $m \le 1$ then the matrix  $\mathcal{B}$ has an integrator,\\
2) if $m > k + 1$ then the matrix  $\mathcal{B}$ does not have an integrator,\\
3) in the other cases the existence of integrators depends 
on the values of the eigenvalues of $\mathcal{B}$,  i.e. for any sequence of
multiplicities  there are  eigenvalues 
for which an integrator exists  and there are eigenvalues
for which integrator does not exist.
\end{theorem}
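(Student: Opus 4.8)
The plan is to reduce the statement entirely to the polynomial-level classification already obtained, using the matrix-to-polynomial bridge of Section~3. By Notation~\ref{N3.2} the characteristic polynomial $p_{\mathcal{B}}$ is exactly the polynomial $f$ of type $(k,m)$ from Notation~\ref{DEF:f_q_Q_h}, and by Theorem~\ref{THM:integrable_iff_fullintegral} the matrix $\mathcal{B}$ is integrable (equivalently, admits an integrator) if and only if $p_{\mathcal{B}}$ admits a full integral. Hence each of the three cases should follow by feeding the corresponding item of Theorem~\ref{THM:full_int_classification}, applied to $p_{\mathcal{B}}$, through this equivalence.

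Concretely, for item~1) I would argue that when $m \le 1$, item~1) of Theorem~\ref{THM:full_int_classification} guarantees that $p_{\mathcal{B}}$ has a full integral, so Theorem~\ref{THM:integrable_iff_fullintegral} produces an integral of $\mathcal{B}$, i.e.\ an integrator. For item~2), when $m > k+1$, item~2) of Theorem~\ref{THM:full_int_classification} gives that $p_{\mathcal{B}}$ has no full integral, and the same equivalence then shows $\mathcal{B}$ has no integrator.

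For item~3), the only point requiring care is the translation between ``eigenvalues of $\mathcal{B}$'' and ``zeros of $f$''. Since $\mathcal{B}$ is diagonal, prescribing its eigenvalues together with the multiplicities $\alpha_1,\dots,\alpha_m$ of the multiple ones is the same as prescribing the pairwise distinct numbers $a_1,\dots,a_k,b_1,\dots,b_m$ that determine $f$; conversely, every such $f$ is the characteristic polynomial of a diagonal matrix with the prescribed multiplicity pattern. Thus item~3) of Theorem~\ref{THM:full_int_classification}, which for each fixed multiplicity sequence produces both an $f_1$ possessing a full integral and an $f_2$ possessing none, translates through Theorem~\ref{THM:integrable_iff_fullintegral} into the existence of eigenvalue choices making $\mathcal{B}$ integrable and eigenvalue choices making it non-integrable.

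The proof has essentially no obstacle: all the analytic content already resides in Theorem~\ref{THM:full_int_classification} and in the bridge Theorem~\ref{THM:integrable_iff_fullintegral}, both established above. The one thing to state precisely is that the matrix-level trichotomy is exactly the image of the polynomial-level trichotomy under the correspondence ``diagonal matrix $\longleftrightarrow$ its characteristic polynomial'', so that case~3) genuinely asserts dependence on the \emph{values} of the eigenvalues rather than merely on their multiplicities.
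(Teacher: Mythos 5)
Your proof is correct and follows essentially the same route as the paper: the paper's own proof simply invokes Theorem~\ref{THM:integrable_iff_fullintegral} to equate matrix integrability with full integrability of $p_{\mathcal{B}}$, then applies Theorem~\ref{THM:full_int_classification}. Your additional remark about the correspondence between eigenvalue choices and the distinct numbers $a_1,\dots,a_k,b_1,\dots,b_m$ just makes explicit a translation the paper leaves implicit.
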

\begin{proof}
By Theorem \ref{THM:integrable_iff_fullintegral} the integrability of a
matrix is equivalent to the full integrability of its characteristic polynomial.
Then  Theorem \ref{THM:full_int_classification} is applicable and concludes
the proof.
\end{proof}

\begin{rmk}
The subset of integrable matrices is dense in $M_n({\mathbb C})$ and
the subset of non-integrable matrices is sparse. 
\end{rmk}
\begin{proof}
Indeed, the subset of non-derogatory diagonalizable matrices is dense,
by the first item of
Theorem \ref{THM:diagonal_integrable_classifications} such matrices are
integrable. The complement to the subset of non-derogatory matrices is sparse,
therefore the subset of non-integrable matrices is sparse.
\end{proof}

\begin{lemma}
Let $m > 1$ and $q(x) \in \C[x]$ be fixed.  Denote by
$S \subseteq M_n(\C)$  the subset of matrices such that $q$ is a factor of their
characteristic polynomials.
Then the subset of non-integrable matrices $S_{1} \subseteq S$ is dense in $S$
and the subset of integrable matrices $S_{2} \subseteq S$ is sparse in $S.$
\end{lemma}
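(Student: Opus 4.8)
The plan is to reduce the matrix statement to the analysis of full integrals of the polynomial $qr$ and then transport the resulting genericity statement back to $S$ through the characteristic-polynomial map. Since integrability is invariant under conjugation (Lemma~\ref{LM:integral_basis_change}) and, for a diagonalizable matrix, is detected by its characteristic polynomial alone (Theorem~\ref{THM:integrable_iff_fullintegral}), I would work with the diagonalizable matrices of $S$ and encode each such $M$ by the datum $r := p_M/q$, a monic polynomial of degree $k := n - \deg q$. Writing $q = (x-b_1)^{\alpha_1}\cdots(x-b_m)^{\alpha_m}$ as in Notation~\ref{DEF:f_q_Q_h}, the whole question becomes: for which monic $r$ of degree $k$ does $qr$ admit a full integral?

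First I would single out the generic stratum $R^{\circ}$ of monic degree-$k$ polynomials $r$ having $k$ distinct roots, none equal to any $b_j$. Its complement inside the affine space $R\cong\C^{k}$ of all such $r$ is cut out by the discriminant of $r$ and by the linear conditions $r(b_j)=0$, hence is a proper subvariety. For $r\in R^{\circ}$ the polynomial $f=qr$ is of type $(k,m)$ with simple part $h=r$, so Lemma~\ref{LM:integratordoesnotexists_helper} applies: if $qr$ has a full integral, then $r\in\text{Im}\,\varphi_{k-m+1,m}$. Because $m>1$, the injectivity in Lemma~\ref{LM:varphiproperties} gives $\dim\text{Im}\,\varphi_{k-m+1,m}=k-m+2\le k$, so $\text{Im}\,\varphi_{k-m+1,m}$ is a proper subspace and the monic degree-$k$ polynomials lying in it form a proper affine subvariety $L\subsetneq R$ (of dimension $k-m+1$ when $k+1\ge m$; when $m>k+1$ the image is trivial and no such $r$ exists at all, recovering Lemma~\ref{LM:manymultipleroots_nofullintegral}). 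Consequently the set of $r$ for which $qr$ has a full integral is contained in the proper subvariety $W:=L\cup(R\setminus R^{\circ})$, and, conversely, every $r\in R\setminus W$ gives a polynomial without full integral.

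It remains to transport this to $S$. The assignment $\chi\colon M\mapsto r=p_M/q$ has coordinates that are polynomial functions of the entries of $M$, and $W$ is the zero set of finitely many nonzero polynomials in the coefficients of $r$; composing, $S_2\subseteq\chi^{-1}(W)$ is contained in a subvariety of $S$. This subvariety is proper: since $\C$ is infinite and $W\subsetneq R$, the set $R\setminus W$ is nonempty, and any $r$ there is realized by a non-integrable diagonal matrix lying in $S$. Hence $S_2$ is nowhere dense, i.e.\ sparse. For the density of $S_1$ I would argue by perturbation: given $M\in S$, keep the eigenvalues $b_j$ (with the multiplicities forced by $q$) fixed and move the remaining eigenvalues to generic nearby values, so that the resulting $r$ lands in the dense open set $R\setminus W$; the perturbed matrix stays in $S$, remains diagonalizable, is non-integrable by the previous paragraph, and can be taken arbitrarily close to $M$. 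Thus $S_1$ is dense and $S_2$ is sparse.

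The main obstacle I anticipate is the transfer step rather than the polynomial analysis. One must make precise that $S$ is to be read through its diagonalizable matrices, since for non-diagonalizable matrices integrability is no longer governed by the characteristic polynomial (a non-derogatory matrix is always integrable); and one must verify that $\chi$ is dominant onto $R$ with fibers of constant dimension $n^{2}-(\sum_j\alpha_j^{2}+k)$ over $R^{\circ}$, so that preimages of proper subvarieties are genuinely nowhere dense in $S$. Checking that the non-generic strata of $R$, where $r$ has a repeated root or shares a root with $q$ and thereby changes the type of $qr$, contribute only a lower-dimensional part of $S$ is the remaining bookkeeping needed to close the argument.
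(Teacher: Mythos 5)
Your core reduction is the same as the paper's: combine Lemma \ref{LM:integratordoesnotexists_helper} with the injectivity of $\varphi_{k-m+1,m}$ (Lemma \ref{LM:varphiproperties}), so that $\dim\text{Im}\,\varphi_{k-m+1,m}=k-m+2<k+1$ and the characteristic polynomial of any integrable diagonalizable member of $S$ is trapped in the proper subspace $q\cdot\text{Im}\,\varphi_{k-m+1,m}$ of $q\cdot\C_k[x]$, and then pull this back along $M\mapsto p_M$. The difference lies in the transfer step: the paper disposes of it by asserting that the preimage of a sparse set under the surjective continuous map $\varrho(M)=p_M$ is sparse, which is not a valid principle for arbitrary continuous surjections (it needs openness of the map or an algebraic argument); your stratification of the quotients $r=p_M/q$ together with the eigenvalue-perturbation argument for density is precisely the repair this step requires, so here your proposal is more careful than the paper.

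More importantly, the ``obstacle'' you flag in your last paragraph is not leftover bookkeeping --- it is a genuine gap in the paper's own proof, and it makes the lemma false as literally stated. The paper's inclusion $S_2\subset\varrho^{-1}(q\cdot\text{Im}\,\varphi_{k-m+1,m})$ rests on ``integrable $\Rightarrow$ $p_M$ has a full integral'', which Theorem \ref{THM:integrable_iff_fullintegral} supplies only for diagonalizable matrices. But $S$ contains non-diagonalizable integrable matrices in bulk: the companion matrix of $qr$ lies in $S$ and is non-derogatory, hence freely integrable by \cite[Theorem 9]{IntegratorsOfMatricies}; since being non-derogatory is an open condition, such matrices form a nonempty relatively open subset of $S$, so $S_2$ cannot be nowhere dense, and $S_1$ cannot be dense because no non-integrable matrix lies near a companion matrix. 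The lemma therefore holds only with $S$ restricted to its diagonalizable (or diagonal) members --- exactly the reading you propose, and consistent with the paper's earlier announcement that it restricts attention to diagonal matrices --- and under that reading your outline is a correct, indeed corrected, version of the paper's argument. The one change to make is to build the diagonalizability restriction into the statement you prove rather than listing it as an anticipated difficulty.
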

\begin{proof}
By Lemma \ref{LM:integratordoesnotexists_helper} if
$f \notin q\cdot\text{Im}\,\varphi_{k-m+1,m}$ then $f$ does not
possess a full integral.

Since
$$\dim \text{Im}\,\varphi_{k-m+1,m} = k - m + 2 < k+1 = \dim \C_{k}[x],$$
one has $\text{Im}\,\varphi_{k-m+1,m}$ is sparse in $\C_{k}[x]$ and
$q\cdot\text{Im}\,\varphi_{k-m+1,m}$ is sparse in~$q\cdot\C_{k}[x].$

Consider the map
$$\varrho: S \longrightarrow q\cdot\C_k[x],$$
$$\varrho(M) = p_{M}(x).$$
Since $\varrho$ is continuous then
$\varrho^{-1}(q\cdot\text{Im}\,\varphi_{k-m+1,m})$ is sparse in~$S$
as a preimage of sparse subset under the action of the surjective continuous
map~$\varrho.$
Hence $S_2 \subset\varrho^{-1}(q\cdot\text{Im}\,\varphi_{k-m+1,m})$
is sparse in~$S$.

Since $S = S_1 \cup S_2$ then $S_1$ is dense in~$S.$
\end{proof}

\section{Diagonalizability of the integral}

\begin{theorem}
\label{THM:diagonal_integral_diagonalizable_criteria}
Let $\mathcal{A}$
be an integral of
$\mathcal{B}$. Then $\mathcal{A}$ is diagonalizable
if and only if the following two conditions are satisfied simultaneously
for the integrators~$u,v$:

1. $u_1 = v_1 = \ldots = u_{C_m} = v_{C_m} = 0$,

2. $v_{d_i} = u_{d_i} = 0$ for any $i$ such that
$p_{\mathcal{A}}(a_i) = 0$.

Here we use the notations for coordinates and indices
introduced in Notation~\ref{N3.2}.
\end{theorem}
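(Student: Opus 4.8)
The plan is to test diagonalizability of $\mathcal{A}$ eigenvalue by eigenvalue, comparing geometric and algebraic multiplicities, and to exploit the bordered-diagonal shape of $\mathcal{A}$ to compute the geometric multiplicities explicitly. First I would record that every eigenvalue of $\mathcal{A}$ that is \emph{not} one of the diagonal entries $\lambda_1,\dots,\lambda_n$ of $\mathcal{B}$ is automatically simple: since $\mathcal{A}$ is an integral, $p_{\mathcal{A}}'=(n+1)p_{\mathcal{B}}$, so any root of $p_{\mathcal{A}}$ of multiplicity $\geq 2$ is a root of $p_{\mathcal{B}}$, hence equals some $\lambda_j$. Such eigenvalues therefore carry only a $1\times1$ Jordan block and may be ignored, and it suffices to examine the diagonal-entry eigenvalues among $b_1,\dots,b_m,a_1,\dots,a_k$.

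For a diagonal entry $\mu$ put $J=\{\,j:\lambda_j=\mu\,\}$ and solve $(\mathcal{A}-\mu I)\binom{w}{t}=0$ with $w\in\K^{n}$, $t\in\K$. The rows indexed by $j\notin J$ determine $w_j=\tfrac{t\,u_j}{\mu-\lambda_j}$; the rows indexed by $j\in J$ give $t\,u_j=0$; and the border row gives, after substituting the determined coordinates, $\sum_{j\in J}v_jw_j+\sigma_\mu t=0$, where $\sigma_\mu=-\Phi(\mu)$ and $\Phi:=p_{\mathcal{A}}/p_{\mathcal{B}}=(x-\tau(\mathcal{B}))-\sum_{i=1}^{n}\tfrac{u_iv_i}{x-\lambda_i}$ is the rational function of Lemma~\ref{LM:diagonal_pA}. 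Hence the geometric multiplicity of $\mu$ equals $|J|+1$ minus the rank of this system in the $|J|+1$ unknowns $\bigl((w_j)_{j\in J},t\bigr)$.

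The key step is to show $\Phi(\mu)=0$ for every diagonal-entry eigenvalue $\mu$ that is a genuine root of $p_{\mathcal{A}}$. I would argue this from $p_{\mathcal{A}}=\Phi\,p_{\mathcal{B}}$ together with $p_{\mathcal{A}}'=(n+1)p_{\mathcal{B}}$: at a common root the multiplicity in $p_{\mathcal{A}}$ is exactly one larger than in $p_{\mathcal{B}}$, so $\Phi$ has a simple zero there. Making this rigorous requires the apparent pole of the partial-fraction form of $\Phi$ at $\mu$ (residue $-\sum_{j\in J}u_jv_j$) to cancel; this is exactly guaranteed by Corollary~\ref{COR:diagonal_pA_preserves_multiple_roots} for $\mu=b_i$, where $\sum_{j\in J}u_jv_j=0$, and for $\mu=a_l$ by the fact that $p_{\mathcal{A}}(a_l)=0$ forces $u_{d_l}v_{d_l}=0$ (read off from Lemma~\ref{LM:diagonal_pA_simple_form}). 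Once $\Phi(\mu)=0$, the border equation becomes $\sum_{j\in J}v_jw_j=0$, which involves only the $w$-unknowns, while the equations $t\,u_j=0$ involve only $t$; the two blocks are independent, so the rank equals the number of the two vectors $u|_{J}$, $v|_{J}$ that are non-zero. This is the step I expect to be the main obstacle, since it is precisely the vanishing of $\Phi(\mu)$ that decouples $t$ from $w$ and makes the criterion demand that \emph{both} $u|_{J}$ and $v|_{J}$ vanish rather than merely their termwise products.

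Finally I would assemble the count. The algebraic multiplicity of $\mu$ is $|J|+1$: for $\mu=b_i$ this is $\alpha_i+1$ by Corollary~\ref{COR:diagonal_pA_preserves_multiple_roots}, and for $\mu=a_l$ with $p_{\mathcal{A}}(a_l)=0$ it is $2=1+1$, again from $p_{\mathcal{A}}'=(n+1)p_{\mathcal{B}}$. Comparing with the geometric multiplicity $|J|+1-\mathrm{rank}$, diagonalizability at $\mu$ is equivalent to $\mathrm{rank}=0$, i.e. to $u_j=v_j=0$ for all $j\in J$. Applying this to each multiple eigenvalue $b_i$, whose index blocks $J$ exhaust $1,\dots,C_m$, yields Condition~1; applying it to each simple eigenvalue $a_l$ with $p_{\mathcal{A}}(a_l)=0$ yields Condition~2; and a simple $a_l$ with $p_{\mathcal{A}}(a_l)\neq 0$ is not an eigenvalue of $\mathcal{A}$ and imposes nothing. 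Since $\mathcal{A}$ is diagonalizable iff the geometric and algebraic multiplicities agree at every eigenvalue, the two conditions together are necessary and sufficient.
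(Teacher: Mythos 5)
Your proof is correct, and while it shares the paper's overall skeleton --- checking, eigenvalue by eigenvalue, that geometric and algebraic multiplicities of the bordered matrix agree --- its execution is genuinely different. The paper argues the two implications separately: assuming conditions 1 and 2 it exhibits explicit eigenvectors (the standard basis vectors $e_j$ for $j$ in the block of a multiple eigenvalue, plus one extra vector lifted from the kernel of a trailing submatrix $A'$ or $A''$ whose characteristic polynomial is shown to vanish at the eigenvalue); assuming a condition fails, it bounds $\mathrm{rank}(\mathcal{A}-\lambda I)$ from below by a row-independence argument. You instead parametrize $\ker(\mathcal{A}-\mu I)$ once, eliminate the coordinates $w_j$, $j\notin J$, and read off the exact dimension $|J|+1-\mathrm{rank}$ of the reduced system, so both directions follow from a single count. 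Your pivot --- that $\Phi=p_{\mathcal{A}}/p_{\mathcal{B}}$ is regular and vanishes at every common eigenvalue, which kills $\sigma_\mu$ and decouples the equations $t\,u_j=0$ from $\sum_{j\in J}v_jw_j=0$ --- is the same fact the paper uses in disguised form when it deduces $p_{A'}(b_1)=0$ from $(x-b_1)^{\alpha_1+1}\mid p_{\mathcal{A}}$ (indeed $p_{A'}=\Phi h$ with $h(b_1)\neq 0$), and your residue bookkeeping is sound: for $\mu=b_i$ the residue $-\sum_{j\in J}u_jv_j$ vanishes by Corollary \ref{COR:diagonal_pA_preserves_multiple_roots}, for $\mu=a_l$ the hypothesis $p_{\mathcal{A}}(a_l)=0$ forces $u_{d_l}v_{d_l}=0$ via Lemma \ref{LM:diagonal_pA_simple_form}, and then the multiplicity comparison from $p_{\mathcal{A}}'=(n+1)p_{\mathcal{B}}$ gives $\Phi(\mu)=0$. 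What your route buys is uniformity and transparency: the rank is visibly $0$, $1$ or $2$ according to whether $u|_J$ and $v|_J$ vanish, so one sees at a glance why the criterion demands that both vectors (not merely the products $u_jv_j$) be zero, and no separate argument is needed for the ``only if'' half. What the paper's route buys is concreteness --- explicit eigenvectors --- at the cost of doing the work in two installments.
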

\begin{proof}
By Corollary \ref{COR:pA_is_full_int} the characteristic
polynomial $p_{\mathcal{A}}(x)$
is a full integral of $\frac{1}{n+1}p_{\mathcal{B}}(x).$
Hence, $p_{\mathcal{A}}(x) = Q(x)H(x)$ for some  $H(x)\in \K[x]$,
here $Q(x)$ is defined by Notation~\ref{DEF:f_q_Q_h}.

1. The multiplicities of the zeros of $H(x)$ are less than or equal to 2.
Indeed, the zeros of $H(x)$ of the multiplicity greater than 2 are
the multiple zeros of $p_{\mathcal{B}}(x)$. But all the multiple zeros of
$p_B(x)$ are included into $Q(x),$ and thus cannot be the zeros of $H(x)$.

2. Let us prove the necessity. Assume that the conditions 1 and 2 are satisfied.
To show that $\mathcal{A}$ is diagonalizable we
calculate  $\dim \text{Ker} (\mathcal{A} - \lambda I)$ for all multiple
eigenvalues $\lambda$ of~$A$ in order to show that the geometric multiplicity of
each eigenvalue coincides with its algebraic multiplicity.
The general situation splits into the following two cases since
the only multiple zeros of $p_{\mathcal{A}}(x)$ are the
zeros of $p_{\mathcal{B}}(x)$.

2.1. $\lambda = b_i, \ i = 1, \ldots ,m.$ Without loss of generality we assume
that $i = 1.$ As the multiplicity of $\lambda$ in $\mathcal{A}$
is $\alpha_1 + 1,$ we need to
show that $\dim \text{Ker} (\mathcal{A} - \lambda I) = \alpha_1+1.$  
For any $j = 1, \ldots, \alpha_1$ the vector
$e_j  
\in \text{Ker} (A - \lambda I)$, since 
$$(A-\lambda I)e_j=\left(O_{\alpha_1}\oplus \bigoplus\limits_{i=2}^m
(b_i-b_1)I_{\alpha_i} \oplus 
\left(\begin{array}{ccccc}   a_1 - b_1& 0 &\ldots & 0&u_{d_1}\\
0 & \ddots& \ddots&\vdots&\vdots\\
\vdots & \ddots &  \ddots& 0 & \vdots \\
0 & \ldots &0 & a_k - b_1& u_n\\
v_{d_1}&\ldots & \ldots & v_n & \tau(\mathcal{B})\\
\end{array}
\right)\right) e_j=0.$$

Therefore $\dim \text{Ker} (\mathcal{A} - \lambda I) \ge \alpha_1.$ 

Consider the submatrix $A'\in M_{k+1}(\K)$ of $\mathcal{A}$ such that
$\mathcal{A}={\rm diag\,}\{\underbrace{b_1,\ldots, b_1}_{\alpha_1}, \ldots,
\underbrace{b_m, \ldots, b_m}_{\alpha_m}\}$ $ \oplus A'$.
Then $p_{\mathcal{A}}(x) = q(x)p_{A'}(x).$
Since $(x - b_1)^{\alpha_1 + 1} \mid p_{\mathcal{A}}(x)$, it follows that
$p_{A'}(b_1) = 0.$ Hence $\det(A' - \lambda I) = 0$ and there
exists a vector $w = (w_1,\ldots, w_{k + 1})$ such that
$(A' - b_1I)w^\top = 0.$ This provides the $(\alpha_1+1)-$st 
vector in $\text{Ker} (\mathcal{A} - \lambda I)$, i.e.:
$$(\mathcal{A} - \lambda I)
(\underbrace{0, \ldots, 0}_{C_m}, w_1, \ldots, w_{k + 1})^{\top} = 0.$$
It is straightforward to see that $\{e_1,\ldots,e_{\alpha_1},w\}$ is a linearly
independent system of vectors. Thus
$\dim \text{Ker} (\mathcal{A} - \lambda I)=\alpha_1+1$ equals to
the algebraic multiplicity of $\lambda$. Hence the Jordan block
corresponding to $\lambda$ is diagonal.

2.2. $\lambda \notin \{b_1,  \ldots , b_m\}.$
It is shown in the item 1 that in this case
the multiplicity of $\lambda$ is 2. Thus
$\lambda = a_j$ for some $j = 1,\ldots, k.$ Without loss of
generality we assume $j = 1.$ Then by the assumptions
we have $v_{d_1} = u_{d_1} = 0.$ Thus
\begin{equation} \label{eq:v_2} (\mathcal{A} - \lambda I)
(\underbrace{0, \ldots, 0}_{C_m}, 1, 0, \ldots, 0)^{\top} = 0.\end{equation}
To construct the other vector we consider the submatrix
$A''\in M_{k}(\K)$  of
$\mathcal{A}$  such that
$$\mathcal{A}={\rm diag\,}\{\underbrace{b_1,\ldots, b_1}_{\alpha_1}, \ldots,
\underbrace{b_m, \ldots, b_m}_{\alpha_m}, a_1 \} \oplus A''.$$ 
Then $p_{\mathcal{A}}(x) = q(x)(x - a_1)p_{A''}(x).$
Since $(x - a_1)^{2} \mid p_{\mathcal{A}}(x)$ then $p_{A''}(a_1) = 0.$
Hence there exists a vector $z=( z_1, \ldots, z_{k})$ such that
$(A'' - \lambda I)z^{\top} = 0.$ Then
$(\mathcal{A} - \lambda I)
(\underbrace{0, \ldots, 0}_{C_m + 1}, z_1, \ldots, z_{k})^{\top} = 0.$
Together with the equality \eqref{eq:v_2} this implies
  $\dim \text{Ker}(\mathcal{A} - \lambda I) = 2.$

3. Let us prove the sufficiency. Assume that some coordinate of $u$
and $v$ corresponding to the common eigenvalues of $\mathcal{A}$ and
$\mathcal{B}$ is nonzero. We can consider two cases:

3.1. The common eigenvalue of $\mathcal{A}$ and
$\mathcal{B}$ is a multiple eigenvalue of $\mathcal{B}$.
Since all the multiple zeros of
$p_{\mathcal{B}}$ are the zeros of $p_{\mathcal{A}}$, it follows that $m > 0$.
Without loss of generality $u_1 \neq 0.$
Consider
the vector
$w' = \mu_{C_1 + 1} r_{C_1 + 1} + \mu_{C_1 + 2} r_{C_1 + 2} + \ldots +
\mu_n r_n + \mu_1r_1,$
where $r_i$ is the $i-$th row of $(\mathcal{A} - b_1I).$
Then for its coordinates we have
$$\begin{cases}
w_{1}' = w_{2}' = \ldots = w_{C_1}' = 0,\\
w_{C_1 + i}' = \mu_{C_1 + i}(b_2 - b_1),\ i = 1, \ldots, \alpha_1,\\
w_{C_2 + i}' = \mu_{C_2 + i}(b_3 - b_1),\ i = 1, \ldots, \alpha_2,\\
\cdots\\
w_{C_{m - 1} + i}' = \mu_{C_{m - 1} + i}(b_{m} - b_1),\ 
i = 1, \ldots, \alpha_m,\\
w_{d_i}' = \mu_{d_i}(a_i - b_1),\ i = 1, \ldots, k.
\end{cases}$$
If $w' = 0$ then $\mu_i = 0, \  i = C_1, \ldots, n.$
In this case $w_{n + 1}' = \mu_{1}u_{1},$ so $\mu_{1} = 0.$
This means that $w' = 0$  if and only if  $\mu_1 = \ldots = \mu_n = 0.$
Therefore the first row of
$(\mathcal{A} - b_1 I)$  and the rows with the indices
$\alpha_1 + 1, \ldots, n$ form a linearly independent set. 
Hence the rank of $(\mathcal{A} - b_1I)$ is at
least $n - \alpha_1 + 1.$ Therefore
$\dim \text{Ker}(\mathcal{A} - b_1I) =
n + 1 - rk(\mathcal{A}) \leq \alpha_1 < \alpha_1 + 1.$
Thus $\mathcal{A}$ is not diagonalizable.

3.2. The common eigenvalue of $\mathcal{A}$ and
$\mathcal{B}$ is a simple eigenvalue of $\mathcal{B}$.
Then $k > 0$.
Without loss of generality $u_{d_1} \neq 0,$ $p_{\mathcal{A}}(a_1) = 0.$
Consider
the vector $w'' = \mu_1 r_1 + \mu_2 r_2 + \ldots + \mu_n r_n,$
where $r_i$ is the $i-$th row of $(\mathcal{A} - a_1I).$
Then for its coordinates we have
$$\begin{cases}
w_{d_1}'' = 0,\\
w_{d_i}'' = \mu_{d_i}(a_i - a_1),\ i = 2, \ldots, k,\\
w_{C_0 + i}'' = \mu_{C_0 + i}(b_1 - a_1),\ i = 1, \ldots, \alpha_1,\\
w_{C_1 + i}'' = \mu_{C_1 + i}(b_2 - a_1),\ i = 1, \ldots, \alpha_2,\\
\cdots\\
w_{C_{m - 1} + i}'' = \mu_{C_{m - 1} + i}(b_{m} - a_1),\ 
i = 1, \ldots, \alpha_m.
\end{cases}$$
If $w'' = 0$ then $\mu_i = 0, \ i = 1, \ldots, n, \ i \neq d_1.$
In this case $w_{n + 1}'' = \mu_{d_1}u_{d_1},$ thus $\mu_{d_1} = 0.$
This means that $w = 0$ if and only if $\mu_1 = \ldots = \mu_n = 0.$
Therefore the first $n$ rows of
$(\mathcal{A} - a_1I)$
form  linearly independent set.
So the rank of $(\mathcal{A} - a_1I)$
is at least $n.$ Hence
$\dim \text{Ker}(\mathcal{A} - a_1I) =  n + 1 - rk(\mathcal{A})  < 2.$
Thus $\mathcal{A}$ is not diagonalizable.
\end{proof}

\begin{cor}
Let $\mathcal{B}$ be an integrable diagonalizable matrix.
Then among the integrals of $\mathcal{B}$ there are
both non-diagonalizable matrices and  diagonalizable matrices.
\end{cor}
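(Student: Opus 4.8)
The plan is to reduce to the diagonal case and then exhibit the two required integrals explicitly, controlling in each case which of the two conditions of the diagonalizability criterion (Theorem \ref{THM:diagonal_integral_diagonalizable_criteria}) is satisfied. Since $\mathcal{B}$ is diagonalizable, I would write $\mathcal{B} = X\mathcal{D}X^{-1}$ with $\mathcal{D}$ diagonal as in Notation \ref{N3.2}; by Lemma \ref{LM:integral_basis_change} the integrals of $\mathcal{B}$ are exactly the matrices $\mathrm{diag}(X,1)\,\mathcal{A}\,\mathrm{diag}(X^{-1},1)$ with $\mathcal{A}$ an integral of $\mathcal{D}$, and conjugation preserves diagonalizability, so it suffices to treat $\mathcal{B}=\mathcal{D}$. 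As $\mathcal{B}$ is integrable, Theorem \ref{THM:integrable_iff_fullintegral} supplies a full integral $F$ of $p_{\mathcal{B}}$, and every integral built below will be arranged to have characteristic polynomial $(n+1)F$. The guiding principle throughout is Lemma \ref{LM:diagonal_pA}: since $p_{\mathcal{A}}$ depends only on the products $u_iv_i$, I am free to split a prescribed product into factors as convenient, which is precisely what lets me preserve the characteristic polynomial while moving a matrix in or out of the diagonalizable class.

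For the diagonalizable integral I would take the integrator of Corollary \ref{COR:integrator_formula_with_smallest_norm}, that is, set $u_i=v_i=0$ for $i\le C_m$ and, for each simple eigenvalue $a_i$, choose $u_{d_i}=v_{d_i}$ equal to a square root in $\K$ of $(n+1)F(a_i)/h_{a_i}(a_i)$ (which exists because $\K$ is algebraically closed). This has the same products $u_iv_i$ as the integrator of Corollary \ref{COR:integrator_formula}, hence produces an integral with $p_{\mathcal{A}}=(n+1)F$. It meets condition~1 of Theorem \ref{THM:diagonal_integral_diagonalizable_criteria} because every coordinate indexed by $i\le C_m$ vanishes, and it meets condition~2 because whenever $p_{\mathcal{A}}(a_i)=(n+1)F(a_i)=0$ the square root, and therefore $u_{d_i}=v_{d_i}$, is zero. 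Thus this integral is diagonalizable.

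For the non-diagonalizable integral I would instead use the integrator of Corollary \ref{COR:integrator_formula}, which has $u_i=1$ for all $i$, and split into two cases according to whether $\mathcal{B}$ has a repeated eigenvalue. If $m\ge 1$, then $u_1=1\neq 0$ while $v_1=0$, so condition~1 of Theorem \ref{THM:diagonal_integral_diagonalizable_criteria} fails and the integral is not diagonalizable. If $m=0$, condition~1 is vacuous, so I must defeat condition~2 instead: every antiderivative of $p_{\mathcal{B}}$ is a full integral, so I may choose $F$ with $F(a_1)=0$; then the integrator of Corollary \ref{COR:integrator_formula} has $u_{d_1}=1\neq 0$ while $p_{\mathcal{A}}(a_1)=(n+1)F(a_1)=0$, so condition~2 fails and the integral is again non-diagonalizable.

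The step I expect to require the most care is the non-diagonalizable construction when $m=0$: one must verify that shifting the constant of integration so that $F(a_1)=0$ keeps $F$ a full integral (here automatic, since $m=0$ makes the defining condition of a full integral vacuous) and that the resulting integrator genuinely yields $p_{\mathcal{A}}(a_1)=0$, turning $a_1$ into a double eigenvalue of $\mathcal{A}$ that is shared with $\mathcal{B}$ and carries the nonzero coordinate $u_{d_1}$. Everywhere else the verification is routine once Lemma \ref{LM:diagonal_pA} and the criterion of Theorem \ref{THM:diagonal_integral_diagonalizable_criteria} are in hand.
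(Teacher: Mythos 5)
Your proof is correct, but its realization differs from the paper's in ways worth noting. The paper starts from an \emph{arbitrary} integral $\mathcal{A}$ of $\mathcal{B}$ and manipulates its integrator: for the diagonalizable integral it zeroes out every coordinate whose product $u_iv_i$ vanishes or which sits over a multiple eigenvalue (Corollary \ref{COR:diagonal_pA_preserves_multiple_roots} guarantees the characteristic polynomial is unchanged), and for the non-diagonalizable one it re-splits the products as $v'=(1,\ldots,1)$, $u_i'=u_i/v_i$ (or $0$), so that a nonzero coordinate sits over a common eigenvalue of $\mathcal{A}$ and $\mathcal{B}$; when $\mathcal{B}$ has no multiple eigenvalue, that common eigenvalue is manufactured by invoking the free-integrability theorem of Bhat--Mukherjee (\cite[Theorem 9]{IntegratorsOfMatricies}) to shift the constant of integration. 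You instead build both integrals from scratch out of a full integral $F$ via the explicit formulas of Corollaries \ref{COR:integrator_formula} and \ref{COR:integrator_formula_with_smallest_norm}, and in the $m=0$ case you replace the appeal to Bhat--Mukherjee by the observation that, absent multiple zeros, every antiderivative is a full integral, so the constant can be chosen to force $F(a_1)=0$. Both arguments ultimately rest on the criterion of Theorem \ref{THM:diagonal_integral_diagonalizable_criteria}, and your case split ($m\ge 1$ defeats condition~1, $m=0$ defeats condition~2) mirrors the paper's dichotomy. What the paper's route buys is a slightly stronger conclusion on the diagonalizable side: \emph{every} integral of $\mathcal{B}$ can be converted into a diagonalizable integral with the \emph{same} characteristic polynomial, whereas your construction realizes only the single polynomial $(n+1)F$. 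What your route buys is self-containedness (no external citation) and transparency about exactly which condition of the criterion holds or fails in each construction. One point you handled correctly but should keep explicit: Corollary \ref{COR:integrator_formula_with_smallest_norm} is stated in the Frobenius-norm setting $\K=\C$, and your square-root integrator is legitimate over a general algebraically closed $\K$ precisely because, by Lemma \ref{LM:diagonal_pA}, the characteristic polynomial depends only on the products $u_iv_i$, which agree with those of Corollary \ref{COR:integrator_formula}.
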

\begin{proof}
Let $\mathcal{A} = \begin{pmatrix}
\mathcal{B} & u^{\top}\\
v & \tau(\mathcal{B})
\end{pmatrix}$ be an integral of $\mathcal{B}$.\\
1) Assume that $\mathcal{A}$ has at least one common
eigenvalue with $\mathcal{B}$.
For given vectors $v, u$ consider the vectors
$$v' = (1, \ldots, 1), u' = (u_1',\ldots, u_n'), \text{ where } 
u_i' = \begin{cases}
\frac{u_i}{v_i}, \ v_i \neq 0;\\
0, \ v_i = 0.
\end{cases}$$
From Lemma \ref{LM:diagonal_pA} we obtain that $\mathcal{A}' = \begin{pmatrix}
\mathcal{B} & u'^{\top}\\
v' & \tau(\mathcal{B})
\end{pmatrix}$ is an integral of $\mathcal{B},$ since
$p_{\mathcal{A}}(x) = p_{\mathcal{A}'}(x).$ By
Theorem \ref{THM:diagonal_integral_diagonalizable_criteria}
$\mathcal{A}'$ is not diagonalizable.\\
1.1) If $\mathcal{B}$ has a multiple eigenvalue, then any integral of
$\mathcal{B}$ has a common eigenvalue with $\mathcal{B}$ due to Corollary
\ref{COR:diagonal_pA_preserves_multiple_roots}.
Thus by the item 1 one can construct a non-diagonalizable integral of
$\mathcal{B}.$\\
1.2) Otherwise $\mathcal{B}$ is non-derogatory, so by
\cite[Theorem 9]{IntegratorsOfMatricies} for any $t \in \K$
there exists such integral $\mathcal{A}_{t}$ of $\mathcal{B}$ that
$p_{\mathcal{A}_{t}}(x) = p_{\mathcal{A}}(x) - t.$ Denote by $\lambda$
some eigenvalue of
$\mathcal{B}.$ Then $\lambda$ is an eigenvalue of
$\mathcal{A}_{p_{\mathcal{A}}(\lambda)}.$
Thus $\mathcal{A}_{p_{\mathcal{A}}(\lambda)}$ is
an integral of $\mathcal{B}$ with a
common eigenvalue with $\mathcal{B}$, therefore by
the item 1 one can construct a non-diagonalizable integral of $\mathcal{B}.$\\
2) For given $v, u$ consider vectors
$v'' = (v_1'',\ldots, v_n''), u'' = (u_1'',\ldots, u_n''),$ where 
$$u_i'' = \begin{cases}
0,\ v_iu_i = 0;\\
0, \ u_i \ \text{corresponds to a multiple eigenvalue}; \\
u_i, \ \text{otherwise};
\end{cases}$$$$
v_i'' = \begin{cases}
0, \ v_iu_i = 0;\\
0, \ v_i \ \text{corresponds to a multiple eigenvalue}; \\
v_i. \ \text{otherwise}
\end{cases}$$\\
From Lemma \ref{LM:diagonal_pA} and
Lemma \ref{COR:diagonal_pA_preserves_multiple_roots}
we obtain that
$\mathcal{A}'' = \begin{pmatrix}
\mathcal{B} & u''^{\top}\\
v'' & \tau(\mathcal{B})
\end{pmatrix}$
is an integral of $\mathcal{B},$ since
$p_{\mathcal{A}}(x) = p_{\mathcal{A}''}(x).$
Let $\lambda$ be a simple eigenvalue of $\mathcal{B},$ which is also
an eigenvalue of $\mathcal{A}$. Then by Lemma \ref{LM:diagonal_pA} we obtain
$$
0 = p_{\mathcal{A}}(\lambda) =
-u_iv_i\frac{p_{\mathcal{B}}}{(x - \lambda)}(\lambda), \ \text{for some}\ i.
$$
Since $\frac{p_{\mathcal{B}}}{(x - \lambda)}(\lambda) \neq 0,$ then
$u_iv_i = 0,$ so $u_i''=v_i'' = 0.$ Thus any coordinate of $v'', u''$
that corresponds to a common eigenvalue of $\mathcal{A}''$ and $\mathcal{B}$ is
equal to 0. Hence by Theorem \ref{THM:diagonal_integral_diagonalizable_criteria}
$\mathcal{A}''$ is diagonalizable.
\end{proof}
\begin{cor} \label{COR:seq_int_iff_seq_fint}
There exists a sequence of diagonalizable matrices
$B = B_1, B_2, \ldots,$ such that $B_{i + 1}$ is an integral of
$B_i, i \in \mathbb N$ if and only if there exists a sequence of polynomials
$p_{B}(x) = p_1(x), p_2(x), \ldots,$ such that $p_{i+1}(x)$
is a full integral of $p_{i}(x), i \in \mathbb N.$
\end{cor}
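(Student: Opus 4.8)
The plan is to translate the statement into the language of characteristic polynomials, where Corollary~\ref{COR:pA_is_full_int} and Theorem~\ref{THM:integrable_iff_fullintegral} already supply the dictionary between matrix integrals and full integrals of polynomials. The only bookkeeping needed is that a full integral $P$ of $p$ merely satisfies $P'=p$ (Definition~\ref{DEF:full_integral}), so the members of a full-integral chain are not monic: if $p_1=p_B$ is monic of degree $n_1$, then a full integral $p_2$ has leading coefficient $1/(n_1+1)$, and so on. I would therefore fix once and for all the scalars $c_i$ given by $c_1=1$ and $c_{i+1}=c_i/(n_i+1)$, where $n_i=\deg p_{B_i}$, and keep track of the identity $p_i=c_i\,p_{B_i}$. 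Since diagonalizability, the characteristic polynomial, and (by Lemma~\ref{LM:integral_basis_change}) the integral relation are all invariant under simultaneous conjugation, I may assume every matrix is diagonal and apply Notation~\ref{N3.2} directly; otherwise I conjugate each $B_i$ to diagonal form, work there, and conjugate the resulting integral back.

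For the forward implication, suppose the diagonalizable matrices $B=B_1,B_2,\dots$ satisfy that $B_{i+1}$ is an integral of $B_i$, and set $p_i:=c_i\,p_{B_i}$. Then $p_1=p_B$ and $p_{i+1}'=c_{i+1}(n_i+1)p_{B_i}=c_i p_{B_i}=p_i$. Moreover Corollary~\ref{COR:pA_is_full_int} gives that $\tfrac{1}{n_i+1}p_{B_{i+1}}$ is a full integral of $p_{B_i}$, so every multiple zero of $p_{B_i}$ (equivalently of $p_i$) is a zero of $p_{B_{i+1}}$, hence of $p_{i+1}$. Thus $p_{i+1}$ is a full integral of $p_i$, and $(p_i)_{i\ge 1}$ is the required chain.

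For the reverse implication I would argue by induction, producing a chain of diagonalizable matrices whose monic characteristic polynomials reproduce the given chain up to the scalars $c_i$. Suppose $p_B=p_1,p_2,\dots$ is a full-integral chain and that I have already built a diagonalizable $B_i$ with $p_{B_i}=p_i/c_i$ (with $B_1=B$). Since $p_{i+1}$ is a full integral of $p_i$, the polynomial $F:=\tfrac{1}{c_i}p_{i+1}$ satisfies $F'=\tfrac{1}{c_i}p_i=p_{B_i}$ and vanishes at every multiple zero of $p_{B_i}$, so $F$ is a full integral of $p_{B_i}$ and $(n_i+1)F=p_{i+1}/c_{i+1}$ is monic. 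The crux is then to realize $F$ by a genuine diagonalizable integral: I must find a diagonalizable $B_{i+1}$ that is an integral of $B_i$ with $p_{B_{i+1}}=(n_i+1)F$. Granting this, the induction closes, since $p_{B_{i+1}}=p_{i+1}/c_{i+1}$ is again of the required form.

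The main obstacle is exactly this last construction, which I expect to be the heart of the argument. Corollary~\ref{COR:integrator_formula} already produces an integral $\mathcal A$ of $B_i$ with $p_{\mathcal A}=(n_i+1)F$, but with the integrator $u\equiv 1$ it is typically not diagonalizable once $B_i$ has a multiple eigenvalue. The idea is to alter the integrator without disturbing the characteristic polynomial: by Lemma~\ref{LM:diagonal_pA_simple_form} the polynomial $p_{\mathcal A}$ depends only on the products $u_{d_i}v_{d_i}$ attached to the simple eigenvalues $a_i$, so I may zero out all coordinates attached to multiple eigenvalues, set $u_{d_i}=v_{d_i}=0$ whenever the prescribed product $(n_i+1)F(a_i)/h_{a_i}(a_i)$ vanishes (which by Lemma~\ref{LM:diagonal_pA_simple_form} is exactly when $p_{\mathcal A}(a_i)=0$), and otherwise keep the product unchanged. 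This leaves $p_{\mathcal A}=(n_i+1)F$ intact while installing precisely the two vanishing conditions of Theorem~\ref{THM:diagonal_integral_diagonalizable_criteria}, so the modified integral is diagonalizable; over $\C$ one may instead quote the balanced integrator of Corollary~\ref{COR:integrator_formula_with_smallest_norm}, which already satisfies the same criterion. This produces the diagonalizable $B_{i+1}$ that completes the induction, and together with the forward implication establishes the equivalence.
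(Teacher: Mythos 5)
Your proof is correct and takes essentially the same route as the paper: the forward direction via Corollary~\ref{COR:pA_is_full_int} with rescaling, and the reverse direction by producing at each step a diagonalizable integral realizing the prescribed full integral --- your inline modification of the integrator is exactly the construction (item 2) in the proof of the corollary following Theorem~\ref{THM:diagonal_integral_diagonalizable_criteria}, which is what the paper's terse ``by taking a diagonalizable integral'' implicitly invokes. If anything, your bookkeeping is more careful than the paper's: the compounding scalars $c_{i+1}=c_i/(n_i+1)$ and the inductive matching $p_{B_{i+1}}=p_{i+1}/c_{i+1}$ are needed for the chain to continue, whereas the paper's displayed sequence $p_{B_1},\ \tfrac{1}{\deg p_{B_1}}p_{B_2},\ \tfrac{1}{\deg p_{B_2}}p_{B_3},\ldots$ neither compounds these factors nor has the correct normalization.
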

\begin{proof}
1. Let us prove the necessity. If the sequence $B = B_1, B_2, \ldots,$
such that $B_{i + 1}$ is an integral of
$B_i, \ i \in \mathbb N,$ exists then by
Corollary \ref{COR:pA_is_full_int} the sequence
$p_{B_1}(x), \frac{1}{\deg p_{B_1}(x)}p_{B_2}(x),$ $
\frac{1}{\deg p_{B_2}(x)}p_{B_3}(x),
\ldots$ has the desired property.

2. Let us prove the sufficiency. If the sequence
$p_{B}(x) = p_1(x), p_2(x), \ldots,$ such that $p_{i+1}(x)$
is a full integral of $p_{i}(x), i \in \mathbb N,$ exists then by taking an
diagonalizable integral we obtain the sequence $B_1, B_2, \ldots$
with the desired property.
\end{proof}

Let us remind that $k$ is the number of the simple zeros of
a polynomial $p$, and $m$ is the number of its different multiple roots,
in accordance with Definition \ref{DEF:km}.
\begin{lemma}
Let $m = 0.$ Then there exists a sequence of diagonalizable matrices
$\mathcal{B} = B_1, B_2, \ldots,$ where $B_{i + 1}$ is an integral of
$B_i, \ i \in \mathbb N.$
\end{lemma}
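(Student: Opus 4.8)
The plan is to pass from matrices to polynomials using Corollary~\ref{COR:seq_int_iff_seq_fint} and then to construct the polynomial sequence while keeping every zero simple at each stage. By Corollary~\ref{COR:seq_int_iff_seq_fint} it suffices to produce an infinite sequence of polynomials $p_{\mathcal{B}}(x) = p_1(x), p_2(x), \ldots$ in which each $p_{i+1}$ is a full integral of $p_i$; the corollary then converts such a sequence into a sequence of diagonalizable matrices with the required property. Since $m = 0$, the starting polynomial $p_1 = p_{\mathcal{B}}$ has only simple zeros, and for such polynomials the multiple-zero requirement in Definition~\ref{DEF:full_integral} is vacuous, so any antiderivative is automatically a full integral.

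The core auxiliary fact I would establish first is: \emph{if $p \in \K[x]$ has only simple zeros, then $p$ has an antiderivative with only simple zeros.} Let $d = \deg p$, with zeros $\zeta_1, \ldots, \zeta_d$, fix any $P_0$ with $P_0' = p$, and consider the family $P_0 + C$, $C \in \K$. If $P_0 + C$ has a multiple zero at some $x_0$, then $(P_0 + C)(x_0) = 0$ and $(P_0 + C)'(x_0) = p(x_0) = 0$, forcing $x_0 \in \{\zeta_1, \ldots, \zeta_d\}$ and hence $C \in \{-P_0(\zeta_1), \ldots, -P_0(\zeta_d)\}$. This is a finite set of ``bad'' constants, and since $\K$ is infinite I may choose $C$ outside it; the resulting antiderivative has only simple zeros.

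With this fact in hand I would run a straightforward induction. Put $p_1 = p_{\mathcal{B}}$, which has simple zeros. Given $p_i$ with only simple zeros, pick an antiderivative $p_{i+1}$ that again has only simple zeros; as $p_i$ has no multiple zero, $p_{i+1}$ is a full integral of $p_i$. The induction never stalls, yielding the desired infinite sequence of polynomials, and Corollary~\ref{COR:seq_int_iff_seq_fint} finishes the proof by producing the diagonalizable matrices $\mathcal{B} = B_1, B_2, \ldots$ with each $B_{i+1}$ an integral of $B_i$.

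The only genuine difficulty is guaranteeing that the construction does not terminate: by Theorem~\ref{THM:full_int_classification} a full integral can fail to exist once the number of distinct multiple zeros exceeds the number of simple ones plus one. Maintaining the invariant that all zeros stay simple (that is, $m = 0$ at every stage) removes this danger entirely, and the finiteness of the set of bad integration constants shows the invariant can always be preserved as the degree grows by one at each step.
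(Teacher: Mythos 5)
Your proof is correct, and it shares with the paper's proof the one essential idea: at each step, shift the constant of integration to avoid a finite set of bad values, so that all zeros stay simple, and iterate. Where you genuinely differ is in the machinery used to realize this. The paper never leaves the matrix setting: since $m=0$, the diagonal matrix $\mathcal{B}$ is non-derogatory, hence freely integrable by \cite[Theorem 9]{IntegratorsOfMatricies}, so for every $t\in\K$ there is an integral $\mathcal{A}_t$ of $\mathcal{B}$ with $p_{\mathcal{A}_t}=p_{\mathcal{A}}-t$; choosing $t\notin\{p_{\mathcal{A}}(\lambda):\lambda\in\text{spec}(\mathcal{B})\}$ makes $p_{\mathcal{A}_t}$ have no common zero with its derivative $(n+1)p_{\mathcal{B}}$, so $\mathcal{A}_t$ has simple spectrum and is therefore itself diagonalizable and non-derogatory, and the sequence is built by direct iteration at the matrix level. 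You instead transfer the whole problem to polynomials via Corollary \ref{COR:seq_int_iff_seq_fint} (which is legitimate, as that corollary precedes this lemma and does not depend on it) and prove the elementary fact that a polynomial with simple zeros has an antiderivative with simple zeros, via the same finite-avoidance argument applied to $P_0+C$. What each route buys: yours is more self-contained at the polynomial level and avoids citing Bhat--Mukherjee's Theorem 9 entirely; the paper's avoids leaning on Corollary \ref{COR:seq_int_iff_seq_fint}, whose sufficiency direction silently packages the nontrivial step of producing a \emph{diagonalizable} integral with prescribed characteristic polynomial, and it yields the slightly stronger conclusion that every term of the sequence can be taken non-derogatory, whereas in your argument the diagonalizability of the matrices $B_2, B_3,\ldots$ is inherited from that corollary rather than exhibited directly.
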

\begin{proof}
If $m = 0$ then $\mathcal{B}$ is integrable by
Theorem \ref{THM:diagonal_integrable_classifications}.
Let $\mathcal{A}$ be an integral of $\mathcal{B}.$ Since $\mathcal{B}$ is
non-derogatory then by \cite[Theorem 9]{IntegratorsOfMatricies}
for any $t \in \K$
there exists an integral $\mathcal{A}_{t}$ of $\mathcal{B}$ such that
$p_{\mathcal{A}_{t}}(x) = p_{\mathcal{A}}(x) - t.$ Taking $t$ different from
$p_{\mathcal{A}}(\lambda), \ \lambda \in \text{spec}(\mathcal{B})$
we obtain that
$p_{\mathcal{A}_{t}}(\lambda) \neq 0,$ thus $p_{\mathcal{A}_{t}}(x)$
has no multiple zeros and
$\mathcal{A}_{t}$ is non-derogatory.
Therefore for any non-derogatory diagonalizable matrix there
exists a non-derogatory diagonalizable integral.
Thus we can construct the desired sequence.
\end{proof}
\begin{lemma}
Consider a sequence of diagonalizable matrices
$\mathcal{B} = B_1, B_2, \ldots, B_{l},$ where $B_{i + 1}$ is an integral of
$B_i$ for each $ i = 1, \ldots, l - 1$. Then $m \leq 1 + \frac{k}{l - 1}.$
\end{lemma}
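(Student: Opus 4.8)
The plan is to track, along the chain $B_1, \ldots, B_l$, the type $(k_i, m_i)$ of the characteristic polynomial $p_i := p_{B_i}$, with $(k_1, m_1) = (k, m)$, and to understand precisely how this type transforms under one integration step. Assuming $l \ge 2$ (the inequality being vacuous otherwise), the argument reduces to two ingredients: a transition rule for $(k_i, m_i) \mapsto (k_{i+1}, m_{i+1})$, and a telescoping estimate that exploits the monotonicity of the $m_i$. The key idea is that integration can never destroy a multiple zero, so $m_i$ is non-decreasing, and this rigidity is what forces the simple-zero count to drop by roughly $m$ at each step.

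First I would establish the transition rule. Since $B_{i+1}$ is a diagonalizable integral of $B_i$, Corollary \ref{COR:pA_is_full_int} gives that $\frac{1}{n_i+1}p_{i+1}$ is a full integral of $p_i$, where $n_i = \deg p_i$; in particular $p_{i+1}' = (n_i+1)p_i$ and every multiple zero of $p_i$ is a zero of $p_{i+1}$. I would then classify the zeros of $p_{i+1}$ relative to $p_i$. Each of the $m_i$ multiple zeros of $p_i$ persists as a multiple zero of $p_{i+1}$ with multiplicity raised by exactly one, since its order as a zero of $p_{i+1}' = (n_i+1)p_i$ is unchanged. Each simple zero $a_j$ of $p_i$ is a simple zero of $p_{i+1}'$, so if it is a zero of $p_{i+1}$ at all it has multiplicity exactly $2$, i.e. it becomes a multiple zero; let $s_i$ count how many simple zeros do so, so $0 \le s_i \le k_i$. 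Finally, any zero of $p_{i+1}$ not shared with $p_i$ is not a zero of $p_{i+1}'$ and is therefore simple. This yields $m_{i+1} = m_i + s_i$, and a degree count (using $\sum_j \alpha_j = n_i - k_i$ for the multiplicities $\alpha_j$ of the multiple zeros) gives
\[
k_{i+1} = 1 + k_i - m_i - 2 s_i .
\]

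The second ingredient is the telescoping. From $s_i \ge 0$ and $m_{i+1} = m_i + s_i$ the sequence $m_1 \le m_2 \le \cdots \le m_l$ is non-decreasing, so $m_i \ge m_1 = m$ for every $i$. Substituting this into the transition rule gives
\[
k_{i+1} - k_i = 1 - m_i - 2 s_i \le 1 - m, \qquad i = 1, \ldots, l-1 .
\]
Summing over $i$ telescopes the left side to $k_l - k$, so $k_l - k \le (l-1)(1-m)$. Since $k_l \ge 0$, this rearranges to $(l-1)(m-1) \le k$, that is $m \le 1 + \frac{k}{l-1}$.

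I expect the main obstacle to be the careful bookkeeping behind the transition rule: one must check that the $m_i$ multiple zeros of $p_i$ remain distinct and multiple in $p_{i+1}$, that the doubled simple zeros and the genuinely new simple zeros form disjoint families, and that the degree accounting is exact, so that the formula for $k_{i+1}$ holds with equality and not merely as an inequality. Once that classification is pinned down, the monotonicity $m_i \ge m$ is exactly what makes the telescoping collapse to a linear bound, and the non-negativity of $k_l$ supplies the final inequality.
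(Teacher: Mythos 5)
Your proof is correct and follows essentially the same route as the paper: both track how the numbers of simple and multiple zeros evolve along the chain via the full-integral property (each multiple zero persists with multiplicity raised by one, so the simple-zero count drops by at least $m-1$ per integration step), then telescope over the $l-1$ steps and conclude from the non-negativity of the final simple-zero count. Your version is slightly sharper in the bookkeeping --- you derive exact transition formulas $m_{i+1}=m_i+s_i$ and $k_{i+1}=1+k_i-m_i-2s_i$ where the paper only uses one-sided bounds, and you close with $k_l \geq 0$ where the paper invokes its integrability classification --- but the underlying argument is the same.
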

\begin{proof}
Let $F(x)$ be a full integral of
$p_{\mathcal{B}}(x)=
(x - a_1)\ldots(x - a_k)(x - b_1)^{\alpha_1}\ldots(x - b_m)^{\alpha_m},$
where $a_1,\ldots, a_k, b_1, \ldots, b_m\in \K$ are pair-wise distinct and
$\alpha_1,\ldots, \alpha_m \in
\mathbb N \setminus \{1\},$ i.e., $p_{\mathcal{B}}(x)$ has $k$ simple zeros
and $m$ multiple zeros. Then
$(x - b_1)^{\alpha_1 + 1}\ldots(x - b_m)^{\alpha_m + 1}\ |\ F(x).$
Thus $F(x)$ has at least $m$ multiple
zeros. Since $\deg F(x) = \deg p_{\mathcal{B}}(x) + 1$ then $F(x)$ has
at most $k + 1 - m$ zeros different from $b_1, \ldots, b_m$. Hence $F(x)$
has at most $k + 1 - m$ simple roots. If $m > 1$ then
$k + 1 - m < k$. Hence $F(x)$ has less simple
zeros than $p_{\mathcal{B}}(x)$. Therefore the number of simple zeros of
$p_{B_l}(x)$ is at most $k - (m - 1)(l - 1)$ and the number of multiple zeros
is at least $m$. Therefore by Theorem
\ref{THM:diagonal_integrable_classifications} we have
$$k - (m - 1)(l - 1) \geq m - 1,$$
$$k \geq (l - 1)(m - 1),$$
$$m \leq 1 + \frac{k}{l-1}.$$
\end{proof}

\begin{lemma}
Let $m = 1$ and $k < 2.$ Then there exists a sequence of diagonalizable matrices
$\mathcal{B} = B_1, B_2, \ldots,$ where $B_{i + 1}$ is an integral of
$B_i,\ i \in \mathbb N.$
\end{lemma}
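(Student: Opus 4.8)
The plan is to reduce the statement to polynomials via Corollary~\ref{COR:seq_int_iff_seq_fint} and then to build an infinite chain of full integrals, showing that full integration preserves the type $(k,1)$ when $k\le 1$. By Corollary~\ref{COR:seq_int_iff_seq_fint}, it suffices to exhibit an infinite sequence of polynomials $p_{\mathcal{B}}=p_1,p_2,\ldots$ in which each $p_{i+1}$ is a full integral of $p_i$; the corollary then converts this into the desired sequence of diagonalizable matrices. So I would construct the polynomial sequence by induction, maintaining the invariant that every $p_i$ has type $(k,1)$ with the same $k\in\{0,1\}$ as $p_{\mathcal{B}}$.

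For the inductive step, suppose $p_i$ has type $(k,1)$ with unique multiple zero $b$ of multiplicity $\alpha\ge 2$ and, in the case $k=1$, a single simple zero $a_1\neq b$. Since the number of distinct multiple zeros of $p_i$ equals $1$, Lemma~\ref{LM:singlemyltipleroot_has_fullintegral} guarantees that a full integral $p_{i+1}$ exists (equivalently, one may quote item~1) of Theorem~\ref{THM:full_int_classification}). By Definition~\ref{DEF:full_integral} we have $p_{i+1}'=p_i$ together with $(x-b)\mid p_{i+1}$, and these two facts force $b$ to be a zero of $p_{i+1}$ of multiplicity exactly $\alpha+1\ge 3$. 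Comparing degrees, $\deg p_{i+1}=\deg p_i+1$, so after extracting the factor $(x-b)^{\alpha+1}$ there remains a factor $g$ of degree $k$ with $g(b)\neq 0$. When $k=0$ this factor is a nonzero constant, so $p_{i+1}=\mathrm{const}\cdot(x-b)^{\alpha+1}$ is again of type $(0,1)$; when $k=1$ it is linear, $p_{i+1}=\mathrm{const}\cdot(x-b)^{\alpha+1}(x-c)$, and the only remaining point is to verify that $c\neq b$.

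The one genuine verification—the main, if small, obstacle—is precisely this claim that $c\neq b$ in the case $k=1$, since it is what keeps the type equal to $(1,1)$ rather than degenerating. I would argue by contradiction: if $c=b$, then $p_{i+1}=\mathrm{const}\cdot(x-b)^{\alpha+2}$, and differentiating gives $p_i=p_{i+1}'=\mathrm{const}\cdot(x-b)^{\alpha+1}$, which has $b$ as its unique zero. This contradicts the fact that $p_i$ has the simple zero $a_1\neq b$. Hence $c\neq b$, the extra zero $c$ is simple, and $p_{i+1}$ again has type $(k,1)$ with the same $k\le 1$.

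This closes the induction: the invariant type $(k,1)$ with $k\le 1$ is reproduced at every stage, so each $p_i$ keeps exactly one distinct multiple zero and therefore always admits a full integral. Consequently the sequence $p_1,p_2,\ldots$ never terminates, and Corollary~\ref{COR:seq_int_iff_seq_fint} yields the required infinite sequence of diagonalizable matrices $\mathcal{B}=B_1,B_2,\ldots$ with $B_{i+1}$ an integral of $B_i$.
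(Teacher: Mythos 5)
Your proof is correct, and it takes a genuinely different route from the paper's. The paper argues by explicit construction: for $k=0$ it simply writes down the matrix sequence $\lambda I_n, \lambda I_{n+1},\ldots$, and for $k=1$ it exhibits closed-form full integrals $F_1=\frac{1}{n+1}(x-\lambda_1)(x-b)^{n}$, $F_2=\frac{1}{(n+1)(n+2)}(x-\lambda_2)(x-b)^{n+1},\ldots$ with the new simple zeros computed recursively ($\lambda_1=a+\frac{a-b}{n}$, $\lambda_2=\lambda_1+\frac{\lambda_1-b}{n+1}$), verifying by direct differentiation, and only then invokes Corollary~\ref{COR:seq_int_iff_seq_fint}. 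You instead run a non-constructive induction: existence of each full integral comes from Lemma~\ref{LM:singlemyltipleroot_has_fullintegral} (which the paper's proof never uses), and the fact that the iteration never terminates comes from a structural invariant --- full integration sends a polynomial of type $(k,1)$ with $k\le 1$ to one of the same type, proved by the exact-multiplicity-increment and degree-count argument. Your version is uniform in $k\in\{0,1\}$ and isolates the conceptual reason the chain is infinite (type preservation), whereas the paper's computation additionally tells you exactly what the integrals and their zeros are. Two minor remarks, neither a gap: your separate contradiction argument for $c\neq b$ is redundant, since you already established $g(b)\neq 0$ when extracting the factor $(x-b)^{\alpha+1}$; and Lemma~\ref{LM:singlemyltipleroot_has_fullintegral} is stated for monic $f$, while your $p_i$ for $i\ge 2$ are not monic --- this is harmless because full integrability is invariant under scaling by a nonzero constant, but it deserves a half-sentence.
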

\begin{proof}
1. If $k = 0$ then $\mathcal{B} = \lambda I_n$ and the sequence
$\lambda I_n, \lambda I_{n+1}, \ldots\ $ satisfies the conditions of the lemma.

2. If $k = 1$ then $p_{\mathcal{B}} = (x - a)(x - b)^{n-1}.$
It is straightforward to see that
$F_1 =\frac{1}{n+1}(x - \lambda_1)(x - b)^{n},$
where $\lambda_1 = a + \frac{a - b}{n},$ is a  full integral of
$p_{\mathcal{B}}.$ Similarly,
$F_2 = \frac{1}{(n+1)(n+2)}(x - \lambda_2)(x - b)^{n+1},$
where $\lambda_2 = \lambda_1 + \frac{\lambda_1 - b}{n+1}.$
Thus we obtain the sequence of polynomials
$  F_0=p_{\mathcal{B}}, F_1, F_2, \ldots$, where $F_i$ is a full
integral of $F_{i - 1}, \ i \in \mathbb N.$
By Corollary \ref{COR:seq_int_iff_seq_fint} we obtain the required sequence
$\mathcal{B} = B_1, B_2, \ldots.$
\end{proof}

\begin{rmk}
If $m = 1$ and $k \geq 2$ then the integral $\mathcal{A}$ of $\mathcal{B}$
can be non-integrable. For example, if
$p_{\mathcal{B}}(x) = x^2(x - 3)(x - 5)$ then it is straightforward to
check that
$F(x) = \frac{1}{5}x^3(x - 5)^2$ is the only full integral of
$p_{\mathcal{B}}(x)$. Thus $p_{\mathcal{A}}(x) = 5F(x)$
by Corollary \ref{COR:pA_is_full_int}. Hence
by Theorem \ref{THM:diagonal_integrable_classifications}
$\mathcal{A}$ is not integrable.
\end{rmk}

\bigskip
\noindent
\section{Applications to dual Schoenberg type inequality}
Sendov's conjecture for polynomials was first formulated
in 1958. It was then mentioned in Hayman's famous research
problems book \cite{Hayman}.
 
\noindent
{\bf Sendov conjecture (1958):} Let $p$ be a polynomial of degree $n \ge 2$
with zeros $z_1, ... , z_n$ and critical points
$w_1, ... , w_{n-1}$. Then, 
$$
\max\limits_{1\le k\le n} \min\limits_{1\le i\le n-1} \left | w_i - z_k \right |
\le   \max\limits_{1\le k \le n} \left |z_k \right |.
$$

The conjecture remains unsolved although attempts to verify this conjecture have
led to many interesting research results. The readers are referred
to the survey papers
\cite{Sch}, \cite{Sendov} as well as the two excellent books on the analytic
theory of polynomials, \cite{RS} and \cite{SS}.
Another conjecture relating the zeros and critical
points of a polynomial is the Schoenberg's conjecture \cite{Schoen}.
Let $z_1, z_2, \ldots , z_n$
be the zeros of a polynomial $p = c_nx^n + \ldots + c_0$ of degree $n$, 
$w_1, w_2, \ldots , w_{n-1}$ be the critical points of $p,$ and let 
$G = (1/n)\sum\limits_{i = 1}^{n} z_i $ be the   arithmetical mean of
the zeros of a polynomial $p$. It can be readily seen that this value
is equal to the arithmetical mean  of the critical points of
$p$, $G =   (1/(n-1))\sum\limits_{i = 1}^{n-1} w_i.$
Indeed, by Vieta's formulas applied for $p$ we obtain
$\sum\limits_{i = 1}^{n} z_i = -\frac{c_{n-1}}{c_n}$.
If we now apply Vieta's formulas for
$p' = nc_nx^{n-1} + (n-1)c_{n-1}x^{n-2} + \ldots + c_1$ we find that
$\frac{1}{n-1}\sum\limits_{i = 1}^{n-1} w_i =
-\frac{1}{n-1}\frac{(n-1)c_{n-1}}{n c_n} = -\frac{c_{n-1}}{nc_{n}} =
\frac{1}{n}\sum\limits_{i = 1}^{n} z_i.$
Till the end of this section $G=G(p)$ denotes this value.
In this notation the Schoenberg's conjecture
can be written as
$$\sum\limits^{n-1}_{i=1} |w_i|^2 \le
|G|^2 +
\frac{n-2}{n} \sum\limits^n_{i=1} |z_i|^2.$$

It is
natural to ask if one can bound $\sum\limits^n_{i=1} |z_i|^2$ by some
expressions in $w_i$ similar to those in Schoenberg's conjecture.
Our results on matrix integrability and full integrability of
polynomials (Theorem \ref{THM:integrable_iff_fullintegral}) are then
applied to prove
the   dual version of the Schoenberg inequality. Namely, this inequality
provides a bound for the sum of  squares of the absolute
values of zeros by an expression
in the critical points.

\begin{theorem}\label{THM:dual Schoenberg}
Following the Notation \ref{DEF:f_q_Q_h}, consider a degree $n$ polynomial
of type $(k, m)$ given by 
$$f(x) = (x - a_1)\ldots(x - a_k)
(x - b_1)^{\alpha_1}\ldots(x - b_m)^{\alpha_m},$$
where $a_1,\ldots, a_k, b_1, \ldots, b_m\in \C$ are pair-wise distinct,
$k, m \in \mathbb N \cup \{0\},\ \alpha_1,\ldots, \alpha_m \in
\mathbb N \setminus \{1\}.$ Let $h(x) := (x - a_1)\ldots(x - a_k)$ and
$h_{a_i}(x) := \frac{h(x)}{x - a_i}$.

Suppose $f$ has a full integral $F$ and $z_1,...,z_{n+1}$ are the zeros of $F$ 
and denote
$G = \frac{1}{n+1}\sum\limits_{i = 1}^{n+1} z_i =
\frac{1}{n}
\left(\sum\limits^{k}_{i=1} a_i + \sum\limits^{m}_{i=1} \alpha_ib_i\right)$
then
$$
\sum\limits^{n+1}_{i=1} |z_i|^2 \le \sum\limits^{k}_{i=1} |a_i|^2 +
\sum\limits^{m}_{i=1} \alpha_i|b_i|^2 + |G|^2 + 2(n+1)
\sum\limits^{k}_{i=1} \left|\frac{F(a_i)}{h_{a_i}(a_i)}\right|
$$
with equality holds if and only if
$\frac{F(a_i)}{h_{a_i}(a_i)}(\overline{a_i - \tau(\mathcal{B})}),
\ i = 1, \ldots, k$
are real.
\end{theorem}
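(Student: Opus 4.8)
The plan is to encode $F$ as a scalar multiple of the characteristic polynomial of a concrete integral of a diagonal matrix, and then to extract $\sum_i |z_i|^2$ from that matrix by the classical Schur inequality, which bounds the sum of squared eigenvalue moduli of any matrix by its squared Frobenius norm. Following Notation \ref{N3.2}, let $\mathcal{B}$ be the diagonal matrix with $p_{\mathcal{B}} = f$. Since $f$ has a full integral, Theorem \ref{THM:integrable_iff_fullintegral} guarantees that $\mathcal{B}$ is integrable, and Corollary \ref{COR:integrator_formula_with_smallest_norm} supplies a distinguished integral $\mathcal{A}$ with $p_{\mathcal{A}} = (n+1)F$ whose Frobenius norm is the least among all integrals realizing this characteristic polynomial. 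The eigenvalues of $\mathcal{A}$ are then exactly the zeros $z_1,\dots,z_{n+1}$ of $F$. As a preliminary step I would record that $\tau(\mathcal{B}) = G$, together with the two displayed formulas for $G$: since $\operatorname{tr}\mathcal{A} = \operatorname{tr}\mathcal{B} + \tau(\mathcal{B}) = (n+1)\tau(\mathcal{B})$, one gets $\frac{1}{n+1}\sum_i z_i = \tau(\mathcal{B}) = \frac{1}{n}\operatorname{tr}\mathcal{B} = \frac{1}{n}\bigl(\sum_{i=1}^k a_i + \sum_{j=1}^m \alpha_j b_j\bigr)$.

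Granting this, the inequality is immediate. Schur's inequality applied to $\mathcal{A}$ gives
\[
\sum_{i=1}^{n+1}|z_i|^2 \le \|\mathcal{A}\|_F^2 ,
\]
and Corollary \ref{COR:integrator_formula_with_smallest_norm} evaluates the norm on the right as $\|\mathcal{A}\|_F^2 = \|\mathcal{B}\|_F^2 + |\tau(\mathcal{B})|^2 + 2(n+1)\sum_{i=1}^k \bigl|F(a_i)/h_{a_i}(a_i)\bigr|$. Since $\mathcal{B}$ is diagonal, $\|\mathcal{B}\|_F^2 = \sum_{i=1}^k|a_i|^2 + \sum_{j=1}^m \alpha_j|b_j|^2$, and with $\tau(\mathcal{B}) = G$ this is exactly the asserted right-hand side.

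The delicate part is the equality case, and this is where I expect the main obstacle. Equality in Schur's inequality holds precisely when $\mathcal{A}$ is normal; moreover, because the minimal-norm integral yields the tightest Schur bound among all integrals with characteristic polynomial $(n+1)F$, overall equality in the theorem is equivalent to normality of this particular $\mathcal{A}$. I would therefore compute the commutator $\mathcal{A}\mathcal{A}^{*} - \mathcal{A}^{*}\mathcal{A}$ for $\mathcal{A}$ taken as the diagonal matrix $\mathcal{B}$ bordered by the symmetric pair $u = v$ of Corollary \ref{COR:integrator_formula_with_smallest_norm}. The rows and columns indexed by the multiple eigenvalues $b_j$ carry vanishing border entries, hence split off as diagonal blocks and contribute nothing; the only surviving off-diagonal entries are those coupling each simple eigenvalue $a_i$ to the corner $\tau(\mathcal{B})$, together with those coupling two distinct $a_i$'s. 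Forcing the $a_i$-to-corner entry to vanish produces, for each $i$, a reality condition tying $u_{d_i}^2 = (n+1)F(a_i)/h_{a_i}(a_i)$ to $a_i - \tau(\mathcal{B})$, which is the source of the asserted equality condition. The real work, and the step I expect to be most technical, is carrying out this commutator computation cleanly and then massaging the resulting family of reality constraints into the single stated form $\frac{F(a_i)}{h_{a_i}(a_i)}\,\overline{(a_i - \tau(\mathcal{B}))} \in \mathbb{R}$, while checking the contribution of the entries that couple distinct simple eigenvalues and confirming they impose nothing beyond the stated conditions.
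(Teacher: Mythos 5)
Your strategy is exactly the paper's: realize $(n+1)F$ as the characteristic polynomial of the minimal-Frobenius-norm integral $\mathcal{A}$ of the diagonal matrix $\mathcal{B}$ with $p_{\mathcal{B}}=f$ (Theorem \ref{THM:integrable_iff_fullintegral} together with Corollary \ref{COR:integrator_formula_with_smallest_norm}), apply Schur's inequality, and characterize equality by normality of $\mathcal{A}$. The inequality half of your argument is complete, correct, and coincides with the paper's proof step by step.

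The gap is the equality case, and it is not merely deferred routine work: the two verifications you flag as ``the real work'' do not come out the way you hope. Write $v_{d_i}=\sqrt{(n+1)c_i}$ with $c_i:=F(a_i)/h_{a_i}(a_i)$, $u=v$, and $w_i:=a_i-\tau(\mathcal{B})$. First, the $n\times n$ diagonal block of $\mathcal{A}\mathcal{A}^{*}-\mathcal{A}^{*}\mathcal{A}$ equals $v^{\top}\overline{v}-\overline{v}^{\top}v$, whose $(d_i,d_j)$ entry is $v_{d_i}\overline{v_{d_j}}-\overline{v_{d_i}}v_{d_j}$; for $k\geq 2$ its vanishing is an \emph{additional} constraint (all nonzero $v_{d_i}$ must lie on one real line through the origin, equivalently all nonzero $c_i$ must have equal arguments), so the couplings between distinct simple eigenvalues do impose conditions beyond the stated one. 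Second, the border condition is $v_{d_i}\overline{w_i}\in\mathbb{R}$, which --- because $v_{d_i}$ is a square root --- is equivalent to $c_i\,\overline{w_i}^{\,2}\in\mathbb{R}_{\geq 0}$, and this is \emph{not} equivalent to the stated condition $c_i\overline{w_i}\in\mathbb{R}$: with $\mathrm{i}$ the imaginary unit, $c_i=\mathrm{i}$, $w_i=\mathrm{i}$ satisfies the stated condition but violates normality, while $c_i=\mathrm{i}$, $w_i=e^{\pi\mathrm{i}/4}$ gives normality but violates the stated condition. So no massaging turns the normality constraints into the theorem's displayed criterion. For what it is worth, the paper's own proof makes exactly these two leaps silently: it writes $O$ for the diagonal blocks of the commutator without justification, and it passes from $v_{d_i}\overline{(a_i-\tau(\mathcal{B}))}=\overline{v_{d_i}}(a_i-\tau(\mathcal{B}))$ to the stated condition ``by substituting the values $v_{d_i}$,'' which is not an equivalence. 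Your caution was well placed, but carrying out your plan honestly leads not to the stated equality clause but to the corrected one (nonnegativity of $c_i\,\overline{w_i}^{\,2}$ for each $i$, plus the equal-argument condition on the $c_i$ when $k\geq 2$), so as written the proposal leaves the equality characterization unestablished.
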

\begin{proof} We shall make use of the Schur inequality \cite[p. 56]{RS} which
says that if $\lambda_i(A)$ are eigenvalues of a square matrix $A$ of
order $n$, then
$$\sum\limits^{n}_{i=1} |\lambda_i(A)|^2 \le || A ||_{F}^{2},$$
and the equality holds if and only if $A$ is normal.

From  Corollary \ref{COR:integrator_formula_with_smallest_norm} we know that
if $f$ has a full integral $F$, then $\mathcal{A} = \begin{pmatrix}
\mathcal{B} & u^{\top} \\
v & \tau(\mathcal{B})
\end{pmatrix}$
is an integral of $\mathcal{B}$ with $p_{\mathcal{A}} = (n+1)F$ possessing the
smallest Frobenius norm 
$||\mathcal{A}||_{F}^{2} = ||\mathcal{B}||_{F}^{2} +
|\tau(\mathcal{B})|^2 + 2\sum\limits_{i = 1}^{k}
\left|\frac{(n+1)F(a_i)}{h_{a_i}(a_i)}\right|$, where
$\tau(\mathcal{B}) := \frac{tr(\mathcal{B})}{n}=G$,\ 
$v_1 = \ldots = v_{C_m} = 0$,\ 
$v_{d_i} = \sqrt{\frac{(n + 1)F(a_i)}{h_{a_i}(a_i)}},\ i = 1,\ldots,k$
and $ u = v$. Since $(n+1)F$ is a characteristic polynomial of
$\mathcal{A}$,  by the Schur inequality
$$
\sum\limits^{n+1}_{i=1} |z_i|^2 = \sum\limits^{n+1}_{i=1}
|\lambda_i(\mathcal{A})|^2 \le
|| \mathcal{A} ||_{F}^{2} = \sum\limits^{k}_{i=1} |a_i|^2 +
\sum\limits^{m}_{i=1} \alpha_i|b_i|^2 + |G|^2 +
2(n+1) \sum\limits^{k}_{i=1} \left|\frac{F(a_i)}{h_{a_i}(a_i)}\right|.
$$

The equality in the Schur inequality holds if and only if
$\mathcal{A}$ is normal, i.e.
$\mathcal{A}\mathcal{A}^*=\mathcal{A}^*\mathcal{A}$.
Direct computations show 
$$\mathcal{A}^{*}\mathcal{A} = \begin{pmatrix}
\overline{\mathcal{B}} \mathcal{B} + \overline{v}^{\top}v &
\overline{\mathcal{B}} u^{\top} + \tau(\mathcal{B})\overline{v}^{\top}\\
\overline{u}\mathcal{B}+ \overline{\tau(\mathcal{B})} v &
\overline{u}u^{\top} + |\tau(\mathcal{B})|^2
\end{pmatrix},\ 
\mathcal{A}\mathcal{A}^{*} = \begin{pmatrix}
\mathcal{B}\overline{\mathcal{B}} + u^{\top}{\overline u} &
\mathcal{B}\overline{v}^{\top} + \overline{\tau(\mathcal{B})}u^{\top}\\
v\overline{\mathcal{B}}+ \tau(\mathcal{B})\overline{u} &
v\overline{v}^{\top} + |\tau(\mathcal{B})|^2
\end{pmatrix}.$$
Since $\mathcal{B}\overline{\mathcal{B}} = \overline{\mathcal{B}}\mathcal{B}$
then
$$
\mathcal{A}\mathcal{A}^*-\mathcal{A}^*\mathcal{A} =
\begin{pmatrix}
O &
(\mathcal{B} - \tau(B))\overline{v}^{\top} -
(\overline{\mathcal{B} - \tau(\mathcal{B}}))v^{\top}\\
v(\overline{\mathcal{B} - \tau(\mathcal{B})}) -
\overline{v}(\mathcal{B} - \tau(\mathcal{B})) & O 
\end{pmatrix}.$$
Thus $\mathcal{A}$ is normal if and only if this matrix is 0.
Since $\mathcal{B}$ is diagonal, it is equivalent to
$v_{d_i}\overline{(a_i - \tau(\mathcal{B}))} =
\overline{v_{d_i}}(a_i - \tau(\mathcal{B})),\ i = 1, \ldots, k$.
Substituting the values $v_{d_i}$ we 
equivalently obtain
$$
\frac{F(a_i)}{h_{a_i}(a_i)}(\overline{a_i - \tau(\mathcal{B})}) = 
\overline{\left(\frac{F(a_i)}{h_{a_i}(a_i)}\right)}(a_i - \tau(\mathcal{B})),\ 
i = 1, \ldots, k.
$$
\end{proof}

If all the critical points of a polynomial $p$ are distinct, then $p$ is
a full integral of $p'$. It then follows from the case 1 of
Theorem \ref{THM:diagonal_integrable_classifications} that we have
the following dual Schoenberg type inequality.
\begin{cor}\label{COR:dual Schoenberg}
Let $p$ be a polynomial of degree $n$ with the zeros
$z_1, \ldots , z_n$ and the
critical points $w_1, \ldots , w_{n-1}$.
Let $G=\frac{1}{n-1}\sum\limits^{n-1}_{i=1}w_i =
\frac{1}{n}\sum\limits^{n}_{i=1}z_i$.
If all the critical points of $p$ are distinct, then
$$
\sum\limits^{n}_{i=1} |z_i|^2 \le  |G|^2 +
\sum\limits^{n-1}_{i=1} |w_i|^2  +
2n \sum\limits^{n-1}_{i=1} \left|\frac{p(w_i)}{p''(w_i)}\right|
$$ 
with equality holds if and only if all elements
$\frac{p(w_i)}{p''(w_i)}\overline{(w_i - G)}, \ i = 1, \ldots, n - 1$ are real.
\end{cor}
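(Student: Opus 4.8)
The plan is to obtain this statement as a direct specialization of Theorem~\ref{THM:dual Schoenberg} applied to the derivative $p'$, after a normalization that makes the leading coefficient cooperate. First I would observe that all quantities appearing in the claim --- the zeros $z_i$, the critical points $w_i$, the mean $G$, and each ratio $p(w_i)/p''(w_i)$ --- are unchanged when $p$ is multiplied by a nonzero constant, so I may assume $p$ is monic. Then I would set $f := \tfrac1n p'$, which is monic of degree $n-1$; since the critical points $w_1,\dots,w_{n-1}$ are distinct, $f$ has $n-1$ distinct simple zeros and no multiple zeros, i.e. $f$ is of type $(k,m)=(n-1,0)$, with $a_i=w_i$ and $h=f$ in the sense of Notation~\ref{DEF:f_q_Q_h}.

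Because $m=0$, item~1 of Theorem~\ref{THM:full_int_classification} guarantees a full integral of $f$; concretely $F:=\tfrac1n p$ satisfies $F'=f$ and, having no multiple-zero obstruction, is a full integral of $f$ whose zeros are exactly $z_1,\dots,z_n$. I would then invoke Theorem~\ref{THM:dual Schoenberg} with its degree parameter equal to $n-1$ (so that ``$n+1$'' in that theorem reads $n$ here) and with no $b_i$-terms. It remains to match the two sides. The mean supplied by the theorem is $\tfrac1n\sum_{i=1}^{n} z_i=\tfrac1{n-1}\sum_{i=1}^{n-1} w_i$, which is precisely $G$, and $\tau(\mathcal B)=G$. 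For the final sum I would compute $h_{a_i}(w_i)=f'(w_i)=\tfrac1n p''(w_i)$ and $F(w_i)=\tfrac1n p(w_i)$, whence
\[
\frac{F(a_i)}{h_{a_i}(a_i)}=\frac{p(w_i)}{p''(w_i)},\qquad i=1,\dots,n-1 .
\]
Substituting these identifications into the inequality of Theorem~\ref{THM:dual Schoenberg} reproduces the stated bound, and its equality clause $\frac{F(a_i)}{h_{a_i}(a_i)}\,\overline{(a_i-\tau(\mathcal B))}\in\mathbb R$ becomes $\frac{p(w_i)}{p''(w_i)}\,\overline{(w_i-G)}\in\mathbb R$, as required.

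The main obstacle is not conceptual but lies exactly in this normalization: the factors $h$ and $h_{a_i}$ in Theorem~\ref{THM:dual Schoenberg} are built from the \emph{monic} product over the roots $a_i$, so feeding in $f=p'$ directly (rather than $\tfrac1n p'$) would rescale $h_{a_i}(a_i)$ by the leading coefficient $n$ and thereby introduce a spurious factor of $n$ into the last sum. Hence the passage to the monic derivative $f=\tfrac1n p'$, together with the matching choice $F=\tfrac1n p$, must be performed before the theorem is applied; once this is done, the remaining work is only the routine bookkeeping of the degree shift $n\mapsto n-1$ and the identification $a_i=w_i$.
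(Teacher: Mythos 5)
Your proposal is correct and follows essentially the same route as the paper: both apply Theorem~\ref{THM:dual Schoenberg} to the (suitably normalized) derivative, using that distinct critical points make $p'$ of type $(n-1,0)$ so that $p$ itself serves as the full integral, and both identify $h_{w_i}(w_i)$ with the appropriate multiple of $p''(w_i)$ via the same product identity. The only cosmetic difference is the normalization — the paper scales so that $p'$ is monic and takes $F=p$, while you make $p$ monic and take $f=\tfrac1n p'$, $F=\tfrac1n p$ — which, as you note, is immaterial since every quantity in the statement is invariant under scaling of $p$.
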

\begin{proof}
Without loss of generality we assume that the coefficient at the
highest degree of $p'(x)$ is $1.$ Since $w_1,\ldots, w_{n-1}$ are distinct,
then in notations of previous
theorem $h(x) = p'(x), \  F(x) = p(x).$
Thus $p''(x) = \sum\limits_{i = 1}^{n - 1} \frac{p'(x)}{x - w_i}$ and
$p''(w_i) = \frac{p'}{x - w_i}(w_i).$
Therefore $\frac{F(w_i)}{h_{w_i}(w_i)} = \frac{p(w_i)}{p''(w_i)}$ and
we obtain the desired formula.
\end{proof}

\begin{rmk} Note that  the extra term
$2n \sum\limits^{n-1}_{i=1} \left|\frac{p(w_i)}{p''(w_i)}\right|$
is indeed necessary to
bound  $\sum\limits^{n}_{i=1} |z_i|^2$.  
To show this we consider the polynomial $p(z)=z^n-z, \ n > 1$.
Then $p'(z) = nz^{n-1} - 1, \  p''(z) = (n - 1)nz^{n-2}.$
Thus 
$\sum\limits^{n}_{i=1} |z_i|^2 = n - 1$ and
$\sum\limits^{n-1}_{i=1} w_i = \begin{cases}
\frac{1}{2}, \ \text{if } n = 2,\\
0, \ \text{if } n > 2.
\end{cases}
$ Moreover
$
\sum\limits^{n-1}_{i=1} |w_i|^2 = (n - 1)n^{\frac{2}{1 - n}}$.

\end{rmk}

\medskip
It is obvious that Sendov conjecture is equivalent to saying that all
the zeros $z_i$ of a polynomial of the degree $n$    lie in the union
$G = \bigcup\limits_{i=1}^{n-1} G_i$ of the disks with the center at the
critical point $w_i:$
\[
G_i = \{z\in\C: |z-w_{i}| \le 
\max\limits_{1\le k \le n} \left |z_k \right |\},\quad i=1,\ldots,n-1\ .
\]

If we consider polynomials as characteristic polynomials of certain matrices
it is  tempting to combine the Gerschgorin's theorem on the location of the
eigenvalues   together with the integration technique for matrices with
simple eigenvalues to study Sendov conjecture.  
To state Gerschgorin's theorem, for any square matrix $A=(a_{ij})$ of
order $n \ge 2$, we shall use the following notation:
\[
R_i(A) = \sum^n_{\genfrac{}{}{0pt}{}{j=1}{j\ne i}} |a_{ij}|,\quad i=1,\ldots,n.
\]
\begin{theorem} {\bf (Gerschgorin's theorem)} \rm{(\cite[p.344]{HJ})}. 
The eigenvalues of any square matrix $A=(a_{ij})$  of order $n \ge 2$
lie in the union $\bigcup\limits_{i=1}^n D_i$ of the Gerschgorin disks
\[
D_i = \{z\in\C: |z-a_{ii}| \le R_i(A)\}\ ,\quad i=1,\ldots,n.
\]
\end{theorem}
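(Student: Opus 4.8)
The plan is to establish the statement through the classical eigenvector-localization argument, which localizes each eigenvalue in terms of a single coordinate of an associated eigenvector. Let $\lambda$ be an arbitrary eigenvalue of $A$, and let $x = (x_1, \ldots, x_n)^{\top} \neq 0$ be a corresponding eigenvector, so that $Ax = \lambda x$. The entire argument rests on examining one well-chosen scalar component of this vector equation.

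First I would select an index $i$ at which the coordinates of $x$ attain their maximal absolute value, i.e.\ $|x_i| = \max_{1 \le j \le n} |x_j|$. Since $x \neq 0$ this maximum is strictly positive, so $|x_i| > 0$, which is precisely what will make the final division legitimate. Writing out the $i$-th coordinate of the identity $Ax = \lambda x$ and isolating the diagonal term yields the relation $(\lambda - a_{ii}) x_i = \sum_{j \neq i} a_{ij} x_j$.

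Next I would take absolute values of both sides and apply the triangle inequality together with the bound $|x_j| \le |x_i|$ valid for every $j$ by the choice of $i$. This gives $|\lambda - a_{ii}|\,|x_i| \le \sum_{j \neq i} |a_{ij}|\,|x_j| \le |x_i| \sum_{j \neq i} |a_{ij}| = R_i(A)\,|x_i|$. Dividing through by $|x_i| > 0$ produces $|\lambda - a_{ii}| \le R_i(A)$, which says exactly that $\lambda \in D_i \subseteq \bigcup_{k=1}^{n} D_k$. Since $\lambda$ was an arbitrary eigenvalue, every eigenvalue of $A$ lies in the union of the Gerschgorin disks.

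There is no substantial obstacle here; the single genuinely non-routine decision is the choice of the index $i$ at which the eigenvector is largest in modulus, since this choice simultaneously guarantees $|x_i| > 0$ (so that the estimate can be normalized) and permits each $|x_j|$ to be bounded by $|x_i|$ in the sum. Everything else reduces to the triangle inequality and elementary algebra.
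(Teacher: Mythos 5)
Your proof is correct: it is the classical eigenvector argument --- pick the coordinate of maximal modulus, isolate the diagonal term in that row of $Ax=\lambda x$, and apply the triangle inequality --- and this is precisely the standard proof given in the reference the paper cites. The paper itself supplies no proof of this statement (it quotes Gerschgorin's theorem from Horn and Johnson as a known result), so your argument matches the intended one and fills in exactly what the citation points to.
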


\medskip
Combining the Gerschgorin's theorem with the integration we obtain the
following theorem.

\begin{theorem}
Let $p$ be a polynomial of degree $n$ with zeros $z_1, z_2, \ldots, z_n$
and distinct critical points $w_1, w_2, \cdots , w_{n-1}$.
Then each zero is lies in the union $\bigcup\limits_{i=1}^n D_i$ of
$n$ disks
$D_i = \{z\in\C: |z-w_{i}| \le 
\max\limits_{1\le j \le n} \left |z_j \right |\}\,\quad i=1,\ldots,n-1$
and
$D_n=\{z\in\C: \left|z-\frac{1}{n}\sum\limits_{i = 1}^{n-1} w_i\right| \le
\frac{n}{\max\limits_{1\le j \le n} \left |z_j\right|}
\sum\limits^{n-1}_{i=1}\left|\frac{p(w_i)}{p''(w_i)}\right|\}$.
\end{theorem}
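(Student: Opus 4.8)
The plan is to realize $p$ as a scalar multiple of the characteristic polynomial of a concrete $n\times n$ matrix obtained by the integration procedure, and then to localize its eigenvalues via Gerschgorin's theorem. First I would normalize $p'$ to be monic, exactly as in the proof of Corollary~\ref{COR:dual Schoenberg}. Since the critical points $w_1,\ldots,w_{n-1}$ are distinct, $p'$ is a polynomial of type $(n-1,0)$, i.e.\ $m=0$; by Theorem~\ref{THM:diagonal_integrable_classifications} it is integrable, and because $m=0$ its full integral is $p$ itself (with the identifications $F=p$, $h=p'$). Setting $\mathcal{B}={\rm diag}(w_1,\ldots,w_{n-1})$, so that $p_{\mathcal{B}}=p'$, Corollary~\ref{COR:integrator_formula} produces an integral $\mathcal{A}=\left(\begin{smallmatrix}\mathcal{B}&u^{\top}\\ v&\tau(\mathcal{B})\end{smallmatrix}\right)\in M_n(\C)$ with $p_{\mathcal{A}}=n\,p$. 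Since $p_{\mathcal{A}}$ and $p$ have the same zeros, the eigenvalues of $\mathcal{A}$ are precisely $z_1,\ldots,z_n$, so Gerschgorin's theorem applied to $\mathcal{A}$ will localize all of them.

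Next I would fix the integrator and exploit the scaling freedom to make the Gerschgorin radii come out right. Corollary~\ref{COR:integrator_formula} lets me take $u_i=1$ and $v_i=\frac{n\,p(w_i)}{p''(w_i)}$ for $i=1,\ldots,n-1$, using the identity $h_{w_i}(w_i)=p''(w_i)$ already verified in Corollary~\ref{COR:dual Schoenberg}. By the remark following Corollary~\ref{COR:integrator_formula}, replacing $(u,v)$ by $(su,s^{-1}v)$ for any $s\in\C\setminus\{0\}$ yields another integrator of $\mathcal{B}$ with the \emph{same} characteristic polynomial, hence the same spectrum $\{z_1,\ldots,z_n\}$. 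I would choose $s=\max_{1\le j\le n}|z_j|$, which I may assume is nonzero since $\max_j|z_j|=0$ forces $p(x)=cx^n$, a degenerate case. Then the last column of $\mathcal{A}$ has entries of modulus $\max_j|z_j|$, while the last row has entries $v_i$ of modulus $\frac{n}{\max_j|z_j|}\bigl|\frac{p(w_i)}{p''(w_i)}\bigr|$.

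Finally I would apply Gerschgorin's theorem directly to this scaled $\mathcal{A}$. For each $i=1,\ldots,n-1$ the $i$-th row has diagonal entry $w_i$ and a single nonzero off-diagonal entry $u_i$ (all other first-$(n-1)$-column entries vanish because $\mathcal{B}$ is diagonal), so $R_i(\mathcal{A})=|u_i|=\max_j|z_j|$ and the $i$-th Gerschgorin disk is exactly $D_i$. The last row has diagonal entry $\tau(\mathcal{B})$ and off-diagonal entries $v_1,\ldots,v_{n-1}$, so $R_n(\mathcal{A})=\sum_{i=1}^{n-1}|v_i|=\frac{n}{\max_j|z_j|}\sum_{i=1}^{n-1}\bigl|\frac{p(w_i)}{p''(w_i)}\bigr|$, giving the disk $D_n$. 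Since every eigenvalue of $\mathcal{A}$ lies in $\bigcup_{i=1}^n D_i$ and the eigenvalues are exactly the zeros of $p$, the conclusion follows.

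The computations above are all routine; the one point that requires genuine care---and the main obstacle---is the \emph{center} of the last disk. The integration construction rigidly fixes the $(n,n)$-entry of $\mathcal{A}$ to be $\tau(\mathcal{B})=\frac{1}{n-1}\sum_{i=1}^{n-1}w_i$, which by Vieta's formulas equals $G=\frac{1}{n}\sum_{i=1}^{n}z_i$. I would therefore verify this identity carefully and reconcile it with the center displayed for $D_n$, since the center emerging naturally from the proof is $G$ rather than $\frac{1}{n}\sum_{i=1}^{n-1}w_i$. The balancing of the two radii is also worth highlighting: the factor $s$ enters the radius of each $D_i$ while $s^{-1}$ enters the radius of $D_n$, so every positive $s$ already gives a valid localization; the specific choice $s=\max_j|z_j|$ is dictated solely by the wish to match the Sendov-type radius $\max_j|z_j|$ in $D_1,\ldots,D_{n-1}$.
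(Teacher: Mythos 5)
Your proposal follows essentially the same route as the paper's proof: set $\mathcal{B}={\rm diag}(w_1,\ldots,w_{n-1})$ so that $p$ is a full integral of $p_{\mathcal{B}}$, take the integral from Corollary~\ref{COR:integrator_formula} with $u_i=1$, $v_i=\frac{np(w_i)}{p''(w_i)}$, rescale the integrator to $(su,s^{-1}v)$ with $s=\max_j|z_j|$ (the paper realizes this rescaling as a similarity by ${\rm diag}(sI,1)$, which preserves the spectrum just as your invocation of the remark after Corollary~\ref{COR:integrator_formula} does), and apply Gerschgorin's theorem. Your closing caveat is also well taken: since integrality forces ${\rm tr}(\mathcal{A})=\frac{n}{n-1}{\rm tr}(\mathcal{B})$, the $(n,n)$-entry is necessarily $\tau(\mathcal{B})=\frac{1}{n-1}\sum_{i=1}^{n-1}w_i=G$, so the center of $D_n$ as displayed in the statement, $\frac{1}{n}\sum_{i=1}^{n-1}w_i$, is a misprint (the paper's final line, which writes $\tau(\mathcal{B}):=\frac{1}{n}{\rm tr}(\mathcal{B})$, is inconsistent with the definition of $\tau$ used throughout the paper).
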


\begin{proof}
Consider $\mathcal{B} = {\rm diag\,}(w_1, \ldots, w_{n-1}).$ Since
$w_1, \ldots, w_{n-1}$ are distinct then $p$ is a full integral of
$p_{\mathcal{B}}(x).$ Consider an integral $\mathcal{A}$ of $\mathcal{B}$
given by formula from Corollary
\ref{COR:integrator_formula}.
For any $s \neq 0$, $\mathcal{A}$
is similar to  $\mathcal{A}_0=\begin{pmatrix}
\mathcal{B}& su^{\top} \\ 
s^{-1}v & \tau(\mathcal{B}) 
\end{pmatrix}$ because
$$
\begin{array}{lll}
& & 
\begin{pmatrix}
\mathcal{B}& su^{\top} \\ 
s^{-1}v & \tau(\mathcal{B}) 
\end{pmatrix} = \begin{pmatrix}
sI_n  & O \\ 
O & 1
\end{pmatrix} \ 

\begin{pmatrix}
\mathcal{B} & u^{\top} \\ v & \tau(\mathcal{B}) 
\end{pmatrix} \
\begin{pmatrix}
s^{-1}I_n & O \\ O & 1
\end{pmatrix} 
\end{array}
$$

Now for $s=\max\limits_{1\le j \le n} \left |z_j \right | > 0$ we have 
$R_i(\mathcal{A}_0)= s|u_i| = \max\limits_{1\le j \le n}
\left |z_j \right |,\ 1\le i \le n-1$
and
$R_n(\mathcal{A}_0)= s^{-1}\sum\limits_{i = 1}^{n-1} |v_i| =
\frac{1}{\max\limits_{1\le k \le n} \left |z_k \right |}
\sum\limits^{n-1}_{i=1}\left|\frac{np(w_i)}{p''(w_i)}\right|$.
Hence by the Gerschgorin's theorem we obtain that 
$$z_l \in \left(\bigcup\limits_{i=1}^{n-1} \{z\in\C: |z-w_{i}| \le 
\max\limits_{1\le j \le n} \left |z_j \right |\}\right) \bigcup
\{z\in\C: |z-\tau(\mathcal{B})| \le
\frac{n}{\max\limits_{1\le j \le n} \left |z_j\right|}
\sum\limits^{n-1}_{i=1}\left|\frac{p(w_i)}{p''(w_i)}\right|\}, 
$$ $l = 1,\ldots, n.$ 
Finally, the equality $\tau(\mathcal{B}) := \frac{1}{n}tr(\mathcal{B}) =
\frac{1}{n}\sum\limits_{i = 1}^{n-1} w_i$ yields the statement of the theorem. 
\end{proof}

\begin{rmk}
The size of the disk $D_n$ can be quite big so that all the zeros $z_i$
are lying inside it and in this case one cannot obtain information
about the relative position between the $z_i$ and $w_j$. 
\end{rmk}

\section*{Acknowledgments}

Investigations of integrability for diagonalizable matrices (Theorem~\ref{THM:diagonal_integrable_classifications}) are supported by the Ministry of Science and Higher Education
of the Russian Federation (Goszadaniye No. 075-00337-20-03, project No.
0714-2020-0005). Necessary and sufficient conditions for a matrix integral to be diagonalizable (Theorem \ref{THM:diagonal_integral_diagonalizable_criteria}) are obtained under the financial support of the Russian Federation Government (Grant number 075-15-2019-1926).

\end{document}